\documentclass[12pt]{amsart}
\usepackage[T1]{fontenc}
\usepackage[utf8]{inputenc}
\usepackage{lmodern,amsmath,amssymb,amsthm,mathtools,euscript,mathrsfs}
\usepackage[a4paper,hmargin=29mm,vmargin=27mm]{geometry}
\usepackage{microtype}
\IfFormatAtLeastTF{2025-06-01}{\IfPackageAtLeastTF{microtype}{2025-07-09}{}{\microtypesetup{expansion=false}}}{}
\usepackage{enumitem,tikz-cd,booktabs,needspace}
\usepackage[colorlinks=true,allcolors=blue!45!black]{hyperref}
\usepackage{xcolor}
\usepackage[nameinlink,noabbrev]{cleveref}
\setlist[enumerate]{label=\textup{(\roman*)},leftmargin=*,itemsep=3pt}
\numberwithin{equation}{section}
\newtheorem{theorem}{Theorem}[section]
\newtheorem{proposition}[theorem]{Proposition}
\newtheorem{lemma}[theorem]{Lemma}
\newtheorem{corollary}[theorem]{Corollary}
\theoremstyle{definition}
\newtheorem{definition}[theorem]{Definition}
\newtheorem{example}[theorem]{Example}
\theoremstyle{remark}
\newtheorem{remark}[theorem]{Remark}
\newcommand{\Q}{\mathbb Q}
\newcommand{\C}{\EuScript C}
\newcommand{\R}{\Vect_k^{\C}}
\newcommand{\B}{\Vect_{\Q}^{\C^{\op}}}
\newcommand{\X}{\mathscr X}
\newcommand{\M}{\mathcal M}
\newcommand{\A}{\mathcal A}
\newcommand{\kk}{\underline{k}}
\newcommand{\op}{\mathrm{op}}
\newcommand{\PL}{\mathrm{PL}}
\newcommand{\qi}{\xrightarrow{\simeq}}
\newcommand{\sSet}{\mathsf{sSet}}
\newcommand{\DGCA}{\mathsf{DGCA}}
\newcommand{\Ch}{\mathsf{Ch}}
\newcommand{\Vect}{\mathsf{Vect}}
\newcommand{\Ho}{\operatorname{Ho}}
\newcommand{\D}{\mathrm D}
\DeclareMathOperator{\Fun}{Fun}
\DeclareMathOperator{\Hom}{Hom}
\DeclareMathOperator{\RHom}{RHom}
\DeclareMathOperator{\Ext}{Ext}
\DeclareMathOperator{\Aut}{Aut}
\DeclareMathOperator{\Soc}{Soc}
\DeclareMathOperator{\im}{im}
\DeclareMathOperator{\coker}{coker}
\DeclareMathOperator{\id}{id}
\DeclareMathOperator{\Cone}{Cone}
\DeclareMathOperator{\Der}{Der}
\DeclareMathOperator{\gldim}{gldim}
\DeclareMathOperator{\Inj}{Inj}
\DeclareMathOperator{\Proj}{Proj}
\DeclareMathOperator{\Sym}{Sym}
\DeclareMathOperator{\cdim}{cd}
\DeclareMathOperator{\Orb}{Orb}
\DeclareMathOperator{\sd}{sd}
\newcommand{\coind}[2]{(\mathrm{incl}_{#1})_*#2}
\newcommand{\Ic}[1]{\underline{k_{#1}}}
\newcommand{\MinModels}{\mathrm{MinModels}}
\newcommand{\AlgModels}{\mathrm{AlgModels}}
\newcommand{\Spaces}{\Fun(\C^{\op},\sSet_*)_{\Q}^{\mathrm{sc,ft}}}
\newcommand{\AlgCat}{\DGCA_{\Q,\mathrm{fin}}^{\mathrm{sc}}(\C)}
\newcommand{\Qind}{\mathcal Q}
\newcommand{\LQ}{\mathbf L\mathcal Q}
\title[DG-minimal models of diagrams]{Algebraically and geometrically minimal\protect\\DG-models of diagrams of spaces}
\author[Nikita Golub]{Nikita Golub\textsuperscript{\(\dagger\)}}
\thanks{\textsuperscript{\(\dagger\)} Mathematics Program and Center for Quantum and Topological Systems
(CQTS), New York University Abu Dhabi, UAE; Department of Mathematics and Computer Science,
St. Petersburg State University, St. Petersburg, Russia.}
\thanks{\textit{Acknowledgements.} The author gratefully acknowledges financial support
from a scholarship in honour of V.~A.~Rokhlin.}
\date{September 15, 2026}
\subjclass[2020]{55P62, 55P91, 18G70, 16E10}
\keywords{Rational homotopy theory, diagrams of spaces, minimal models, injective resolutions, H-spaces}
\hypersetup{pdftitle={Algebraically and geometrically minimal DG-models of diagrams of spaces},pdfauthor={Nikita Golub}}

\begin{document}
\raggedbottom
\begin{abstract}
We extend Sullivan's rational homotopy theory by constructing geometrically and algebraically minimal models for diagrams of simply connected spaces over a class of finite indexing categories. Both classes classify rational diagram homotopy types, but their minimality conditions can differ. We also show that, in contrast to classical rational homotopy theory, H-diagrams need not split into products of Eilenberg--MacLane diagrams, and characterize the indexing categories for which every H-diagram admits such a splitting. Also we apply this to computation of automorphism groups of rational homotopy types of certain diagrams of spaces.
\end{abstract}
\maketitle
\enlargethispage{3pt}

\clearpage
\tableofcontents
\clearpage

\section{Introduction}\label{sec:intro}
A simply connected space $Y$ is \emph{rational} if every $\pi_n(Y)$ is a rational vector space. A \emph{rationalization} of a pointed simply connected space $X$ is a pointed map $\ell_X:X\to X_{\Q}$ to a simply connected rational space such that
\[
 \pi_n(\ell_X)\otimes\Q:\pi_n(X)\otimes\Q
 \xrightarrow{\cong}\pi_n(X_{\Q})\qquad(n\geq2).
\]
Rationalizations exist and are characterized, up to homotopy under $X$, by the universal property
\[
 \ell_X^*:[X_{\Q},Y]_*\xrightarrow{\cong}[X,Y]_*
 \qquad\text{for every simply connected rational space }Y;
\]
here brackets denote pointed homotopy classes. They preserve rational cohomology \cite[Section~1.1]{Hess06}.

Rationalization simplifies many homotopy calculations. For example, $(S^{2r+1})_{\Q}\simeq K(\Q,2r+1)$ for $r\geq1$, where $K(\Q,n)$ is an Eilenberg--MacLane space. An even sphere $S^{2r}$ has just two nonzero rational homotopy groups, in degrees $2r$ and $4r-1$; for $\mathbb{CP}^m$ they occur in degrees $2$ and $2m+1$, with $m\geq1$; see \cite[Section~2.1]{Hess06}. The simplicity of these algebraic descriptions motivates an analogous theory for diagrams of spaces, with models that encode both the spaces and all maps between them.

In Sullivan's formulation, the algebraic counterpart of a space $X$ is the differential graded-commutative algebra (DGCA) of polynomial forms $\A_{\PL}(X)$ \cite{Sullivan77,BG76}. It plays the role of the de Rham complex while retaining rational coefficients and applying to arbitrary simplicial sets.

For a simply connected space of finite rational type, a minimal Sullivan model is a quasi-isomorphism
\[
 \M(X)=(\Sym V,d)\qi\A_{\PL}(X),
 \qquad V=\bigoplus_{n\geq2}V^n,\qquad dV\subset\Sym^{\geq2}V.
\]
Here $\Sym$ denotes the free graded-commutative algebra: generators of even degree are polynomial and those of odd degree are exterior. The differential is determined on generators and extended by the graded Leibniz rule. The model recovers both cohomology and rational homotopy:
\[
 H^*(\M(X))\cong H^*(X;\Q),\qquad
 \mathcal Q(\M(X))^n\cong(\pi_n(X)\otimes\Q)^\vee,
\]
where $\mathcal Q(A)=A_+/(A_+)^2$ is the complex of indecomposables. Since the differential of a minimal Sullivan algebra is decomposable, its indecomposable differential is zero. Minimal Sullivan models exist and are unique up to isomorphism; their homotopy classes of maps describe the rational homotopy category \cite{FHT01}.

The functor of polynomial forms is contravariant:
\[
 \A_{\PL}:(\sSet_*)^{\op}\longrightarrow\DGCA_{\Q}^{\geq0};
\]
evaluation at the basepoint supplies the augmentation. Applied objectwise, it induces
\[
 \A_{\PL}:\Fun(\C^{\op},\sSet_*)^{\op}
 \longrightarrow\Fun(\C,\DGCA_{\Q}^{\geq0}).
\]
Thus an arrow $f:c\to a$ gives a space map $\X(a)\to\X(c)$ and an algebra map $\A_{\PL}(\X(c))\to\A_{\PL}(\X(a))$. Rational homotopy groups form contravariant systems $\underline\pi_n(\X)$, whereas their objectwise duals and the systems of algebra generators are covariant. The polynomial-forms/realization correspondence for diagrams is recalled in \eqref{eq:rational-equivalence}.

Sullivan minimal models are not functorial as strict DGCA models: maps of spaces determine homotopy classes of model maps, without a canonical choice of representatives compatible with composition. To construct minimal models of diagrams, we overcome this obstruction by enlarging the generator systems $\underline\pi_n(\X)^\vee$, the covariant duals of the rational homotopy groups, to their minimal injective resolutions in $\Vect_{\Q}^{\C}$. The additional resolution generators allow the structure maps to commute with the differential and satisfy all composition relations strictly. Thus the enlarged algebras form an actual DGCA diagram representing the original rational homotopy type (\cref{def:elementary,thm:geometric-existence}).

This issue is already central in equivariant rational homotopy theory. For a finite group $G$, the \emph{orbit category} $\Orb_G$ has the transitive $G$-sets $G/H$, for subgroups $H\leq G$, as objects and $G$-equivariant maps as morphisms. A $G$-space $X$ determines the contravariant fixed-point diagram $G/H\mapsto\operatorname{Sing}(X^H)$. The equivariant models of Triantafillou and Rothenberg--Triantafillou use injective systems over this category \cite{Triantafillou82,RothenbergTriantafillou84}. Scull developed minimal models constructed by elementary extensions and distinguished their minimality from the earlier algebraic condition \cite{Scull02,Scull08}. Related constructions over EI-categories appear in Golasi\'nski's work \cite{Golasinski98}.

The existence of algebraically minimal models requires a separate argument, which, to our knowledge, is missing from the literature. Scull's example \cite[Section~21]{Scull02} shows that the elementary-extension construction does not in general produce the decomposability required in \cite[Definition~5.1]{Triantafillou82}. Our cancellation theorem supplies that argument for a stronger notion incorporating cofibrancy (\cref{def:algebraic,thm:algebraic-existence}). It produces an algebraically minimal model as a deformation retract of a geometric model (\cref{lem:algebraic-retract}). This existence result, together with the comparison of the two notions, is a principal contribution of the paper.

Our indexing categories are \emph{good}: they have finitely many morphisms, and every endomorphism is invertible. For the minimal-model constructions we also impose the $\otimes$-condition: the objectwise tensor product of injective systems is injective (\cref{def:good}). All finite orbit categories satisfy this condition by \cref{prop:orbit}, as do finite meet semilattices and subdivisions $\sd(P)$ of finite posets by \cref{ex:meet,ex:subdivision}. The latter index the flag diagrams used in stratified homotopy theory. \Cref{ex:tensorfailure} shows why a hypothesis is needed for general finite posets. We write $\cdim_k(\C)$ for the global dimension of the category algebra $k\C$, the supremum of the injective dimensions of all coefficient systems (\cref{prop:dimension}).

\Needspace{7\baselineskip}
Our main result is the construction of \emph{two classes of minimal DG-models} for diagrams indexed by good categories satisfying the $\otimes$-condition. A \emph{geometrically minimal model} $\M(\X)$ follows the Postnikov tower: at the passage from $\X_n$ to $\X_{n+1}$, it adjoins the minimal injective resolution of $\underline\pi_{n+1}(\X)^\vee$, with differential determined by the $k$-invariant (\cref{thm:geometric-existence,lem:postnikov}). An \emph{algebraically minimal model} removes the contractible summands of the complex of indecomposables, including those joining different Postnikov stages (\cref{lem:cancellation,thm:algebraic-existence}). Its intrinsic minimality condition is
\[
 d_0\bigl(\Soc\mathcal Q(\M)\bigr)=0,
\]
where $d_0$ is the linear differential and the socle is the sum of all simple coefficient subsystems (\cref{def:socle,def:algebraic}).

The existence theorems apply to every cohomologically simply connected DGCA diagram of finite type. The two full categories of minimal models are denoted by $\MinModels_{\Q}(\C)$ and $\AlgModels_{\Q}(\C)$. A quasi-isomorphism of geometrically minimal models is homotopic to an isomorphism (\cref{thm:geometric-rigidity}); a quasi-isomorphism of algebraically minimal models is itself an isomorphism (\cref{thm:algebraic-rigidity}). With morphisms taken modulo homotopy, both categories classify rational diagram homotopy types (\cref{cor:classification}).

\emph{In contrast to classical Sullivan theory, the two constructions can give different (non-isomorphic) minimal models of the same rational homotopy type.} A geometric model can retain a contractible pair of generators belonging to different Postnikov stages, whereas an algebraic model cancels it. We establish an obstruction theory for their coincidence: by \cref{thm:spectral-criterion}, a geometric model is algebraically minimal exactly when the hyper-Ext spectral sequence for its derived indecomposables has no nonzero higher differential for any simple coefficient system. The commutative square in \cref{subsec:square} exhibits a nonzero obstruction and the corresponding cancellation explicitly. \Cref{sec:examples} gives further worked models, including tubular-neighbourhood squares of smooth embeddings and a reflection action on a sphere.

Finally, \cref{thm:h-splitting} characterizes the good categories for which every simply connected H-diagram of finite rational type splits rationally into Eilenberg--MacLane diagrams: the exact condition is $\cdim_{\Q}(\C)\leq1$. Under the $\otimes$-condition, the same bound characterizes coincidence of the two minimality notions for all diagrams (\cref{thm:equivalence}). For finite group actions, it holds precisely for the trivial group and cyclic groups of prime-power order (\cref{cor:groups}).

All spaces are pointed and objectwise simply connected, and have finite rational type. The Postnikov fibres therefore have untwisted coefficients. General nilpotent diagrams require additional fundamental-group actions and local coefficients.

\section{Representations of finite EI-categories}\label{sec:linear}
Fix a field $k$ of characteristic zero. Rational homotopy interpretations use $k=\Q$.
\Needspace{8\baselineskip}
\subsection{Conventions and injective envelopes}
\begin{definition}\label{def:good}
A \emph{good category} is a category with finitely many morphisms in which every endomorphism is invertible. We write
\[
 \R=\Fun(\C,\Vect_k),\qquad G_c=\Aut_{\C}(c).
\]
The additional \emph{$\otimes$-condition} means that $I\otimes_k J$ is injective in $\R$ whenever $I$ and $J$ are injective; tensor products are taken objectwise.
\end{definition}
A \emph{skeleton} is the full subcategory containing one representative of each isomorphism class of objects. Its inclusion into $\C$ is an equivalence, so restriction induces an equivalence of functor categories, compatible with objectwise tensor products and preserving injective objects. We henceforth replace $\C$ by a skeleton and keep the same notation. This makes distinct objects nonisomorphic and avoids counting the same coefficient data more than once in the formulas below.

Nonisomorphisms in a good category cannot form a cycle. Hence there is a function $[-]:\operatorname{Ob}\C\to\{0,\ldots,L\}$ strictly decreasing along nonisomorphisms. Thus $[a]>[b]$ for a nonisomorphism $a\to b$. There need not be a terminal object. Each $G_c$ is finite. The \emph{category algebra} $k\C$ is the finite-dimensional vector space with the morphisms of $\C$ as basis; multiplication is composition when defined and zero otherwise. We identify a functor $F:\C\to\Vect_k$ with the left $k\C$-module $\bigoplus_c F(c)$, on which $f:c\to a$ acts by $F(f)$ on $F(c)$ and by zero on the other summands. Conversely, a module $M$ gives $F(c)=1_cM$, where $1_c$ is the identity morphism of $c$.

A \emph{system of vector spaces} is an object $F\in\Vect_k^{\C}$. A subsystem $E\subset F$ consists of subspaces $E(c)\subset F(c)$ satisfying
\[
 F(f)(E(c))\subset E(a)\qquad\text{for every }f:c\to a.
\]
The maps of $E$ are the restrictions of those of $F$. For an arrow $u:F(c)\to F(a)$, this requirement is $u(E(c))\subset E(a)$; it is the compatibility condition used in forming the socle below. A nonzero system is \emph{simple} if it has no proper nonzero subsystem. For any $kG_c$-module $U$, define the supported system
\[
 S_c(U)(a)=\begin{cases}U,&a=c,\\0,&a\ne c.\end{cases}
\]
Automorphisms of $c$ act on $U$ as prescribed, and all nonisomorphisms act by zero. The system $S_c(U)$ is simple exactly when $U$ is a simple $kG_c$-module, and every simple system has this form. Indeed, let $J\subset k\C$ be the ideal spanned by nonisomorphisms. It is nilpotent, and
\[
 k\C/J\cong\prod_{c}kG_c
\]
is semisimple. Hence $J$ is the radical of $k\C$, and every simple module is annihilated by $J$.

\begin{definition}[Essential extension]\label{def:essential}
An inclusion $E\subset F$ is \emph{essential} if $N\cap E\ne0$ for every nonzero subsystem $N\subset F$. If $F$ is also injective, the inclusion is an \emph{injective envelope} (or \emph{injective hull}) of $E$; see \cite[Tag~08Y1]{Stacks}.
\end{definition}

\begin{definition}[Socle]\label{def:socle}
The \emph{socle} $\Soc F$ is the sum, inside $F$, of all its simple subfunctors. At $c$ define
\begin{equation}\label{eq:socle}
 F_c=\bigcap_{\substack{f:c\to a\\f\text{ nonisomorphism}}}\ker\bigl(F(f):F(c)\to F(a)\bigr).
\end{equation}
The intersection is $F(c)$ if there are no such morphisms. Then
\[
 (\Soc F)(c)=F_c,\qquad
 \Soc F=\bigoplus_c S_c(F_c).
\]
The space $F_c$ is stable under $G_c$ and is a direct sum of simple $kG_c$-modules, since $kG_c$ is semisimple. Thus $S_c(F_c)$ collects all simple subsystems supported at $c$; it need not itself be simple.
\end{definition}

Indeed, the annihilator of $J$ is precisely the system in \eqref{eq:socle}; it is a module over the semisimple quotient $k\C/J$, hence a sum of simples. In particular, every nonisomorphism acts by zero on the socle. Every nonzero subsystem $N\subset F$ meets $\Soc F$ nontrivially: choose the largest $r$ with $J^rN\ne0$; then $0\ne J^rN\subset N\cap\Soc F$. Thus $\Soc F\subset F$ is essential in the sense of \cref{def:essential}. Consequently, a system with zero socle is zero.

For example, on the arrow $c\to a$ a system is a linear map $u:F(c)\to F(a)$, and
\[
 \Soc F=\bigl(\ker u\xrightarrow{\,0\,}F(a)\bigr).
\]
For the identity system $(k\xrightarrow{\id}k)$ the socle is $(0\to k)$, not the entire system. The definition is applied degree by degree to a graded system: $(F^n)_c$ denotes the socle component in degree $n$.

To construct injective systems, let $\mathrm{incl}_c:G_c\to\C$ denote the inclusion of the one-object automorphism category. Restriction to $c$ and right Kan extension give the adjunction
\[
\begin{tikzcd}[column sep=large]
 \Vect_k^{\C}\arrow[r,shift left=1.5,"\mathrm{ev}_c"]
 &kG_c\text{-}\mathrm{Mod}\arrow[l,shift left=1.5,"(\mathrm{incl}_c)_*"] .
\end{tikzcd}
\]
For a $kG_c$-module $U$, the right adjoint is
\begin{equation}\label{eq:coinduced}
 (\coind{c}{U})(a)=\Hom_{kG_c}(k\C(a,c),U).
\end{equation}
For $g:a\to b$, its structure map sends $\phi$ to the function $f\mapsto\phi(fg)$. The adjunction reads
\[
 \Hom_{\Vect_k^{\C}}(F,\coind{c}{U})
 \cong\Hom_{kG_c}(F(c),U).
\]
Evaluation is exact and $kG_c$ is semisimple, so every system \eqref{eq:coinduced} is injective. Its socle is $S_c(U)$: a socle element at $a\ne c$ is killed by evaluation along every $a\to c$, and must vanish.

Following the notation of the injective-envelope construction, write
\[
 \underline{F_c}:=\coind{c}{F_c}.
\]
The three objects $F_c$, $S_c(F_c)$, and $\underline{F_c}$ are distinct: they are respectively a $kG_c$-module, the corresponding socle system supported at $c$, and its injective envelope, which can be nonzero at objects mapping to $c$.

\begin{proposition}\label{prop:envelope}
Every system $F\in\Vect_k^{\C}$ has an injective envelope
\begin{equation}\label{eq:envelope}
 F\lhook\joinrel\longrightarrow\Inj(F):=\bigoplus_c\underline{F_c}.
\end{equation}
Every injective system is a direct sum of systems \eqref{eq:coinduced}. Finite-dimensional systems have finite-dimensional injective envelopes.
\end{proposition}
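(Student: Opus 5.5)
We build the map $F\to\Inj(F)$ from the adjunction for $(\mathrm{incl}_c)_*$, check it is injective and essential using the socle, and then derive the structural statements from uniqueness of envelopes.

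\textbf{Construction and essentiality.} Since $kG_c$ is semisimple, the inclusion $F_c\subset F(c)$ of $kG_c$-modules has a $G_c$-equivariant retraction $p_c:F(c)\to F_c$. Under the adjunction $\Hom_{\Vect_k^{\C}}(F,\coind{c}{U})\cong\Hom_{kG_c}(F(c),U)$, each $p_c$ corresponds to a morphism $\iota_c:F\to\underline{F_c}$. Explicitly, at $a$ it sends $x\mapsto(f\mapsto p_c(F(f)x))$ for $f:a\to c$. Sum these to get $\iota=(\iota_c)_c:F\to\bigoplus_c\underline{F_c}$.

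First, $\iota$ is an isomorphism on socles. The socle of $\underline{F_c}$ is $S_c(F_c)$, so $\Soc\Inj(F)=\bigoplus_c S_c(F_c)=\Soc F$. On $x\in F_c$, the component $\iota_c$ evaluated at $1_c$ gives $p_c(x)=x$. For $g\in G_c$ we get $p_c(gx)=gx$, and nonisomorphisms $a\to c$ do not start at $c$, since $\C$ is skeletal. Components $\iota_{c'}(x)$ with $c'\ne c$ need not vanish, but we do not need them to.

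Injectivity follows. Let $N=\ker\iota$. If $N\ne0$, then $N\cap\Soc F\ne0$ by the essentiality of the socle shown above. But $\Soc F\to\Inj(F)$ is injective, since its $c$-component already is. So $\ker\iota=0$.

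Essentiality follows similarly. If $N\subset\Inj(F)$ is a nonzero subsystem, then $N\cap\Soc\Inj(F)\ne0$. Take $0\ne y$ there, supported at some object $c$, so $y\in\bigoplus_{c'}\underline{F_{c'}}(c)$ lies in the socle. Here the main care is needed: the socle at $c$ of the direct sum is only $S_c(F_c)$ at the $c$ summand, and it is zero in the other summands evaluated at $c$. Indeed, a socle element of $\underline{F_{c'}}$ at $c\ne c'$ vanishes, as the paper noted. Hence $y=\iota(x)$ for some $x\in F_c$ modulo the $c$-component. To make this exact, I would choose the retractions so that $\iota$ restricted to $\Soc F$ lands in $\Soc\Inj(F)$. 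Any morphism maps socle into socle, since nonisomorphisms acting by zero is preserved. So $\iota(\Soc F)\subset\Soc\Inj(F)$, and it is an injective map between spaces of equal dimension objectwise, hence surjective in the finite-dimensional case. In general, the components $c'\ne c$ of $\iota(x)$ lie in socles of $\underline{F_{c'}}$ at $c$, which vanish. Thus $\iota|_{\Soc F}$ is an isomorphism onto $\Soc\Inj(F)$, and $y\in\iota(F)\cap N$. Together with injectivity of each $\underline{F_c}$ (already shown), this proves that $\iota$ is an injective envelope.

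\textbf{Structure of injectives.} Let $I$ be injective. The envelope $I\hookrightarrow\Inj(I)$ splits, since $I$ is injective. The complement $K$ then has $K\cap I=0$, so $K=0$ by essentiality. Thus $I\cong\Inj(I)=\bigoplus_c\coind{c}{I_c}$. Decomposing $I_c$ into simples and using that $(\mathrm{incl}_c)_*$ commutes with direct sums gives the claimed decomposition; it is a Hom-functor out of a finite set $k\C(a,c)$.

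\textbf{Finiteness.} If $F$ is finite-dimensional, each $F_c$ is finite-dimensional. Each $\underline{F_c}(a)=\Hom_{kG_c}(k\C(a,c),F_c)$ is finite-dimensional because $\C$ has finitely many morphisms, and there are finitely many objects. I expect the only delicate point to be the socle-matching step; the rest is formal.
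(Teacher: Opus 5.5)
Your proof is correct and follows the paper's own argument: you build $F\to\bigoplus_c\underline{F_c}$ from equivariant retractions via the adjunction, show it restricts to an isomorphism on socles, and then deduce injectivity, essentiality, and the splitting of injective systems from essentiality of the socle. One small cleanup: for $x\in F_c$ and $c'\ne c$, the components $\iota_{c'}(x)$ do vanish, contrary to your remark that they need not, because every $f:c\to c'$ is a nonisomorphism and kills $F_c$; your later socle argument in fact relies on this vanishing, and no special choice of retractions is required.
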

\begin{proof}
Choose $G_c$-equivariant retractions $p_c:F(c)\to F_c$. The adjunction defines the natural map
\[
 F(a)\longrightarrow\bigoplus_c\Hom_{kG_c}(k\C(a,c),F_c),
 \qquad x\longmapsto\bigl[f\mapsto p_cF(f)x\bigr]_c.
\]
On the socle it is the identity under the preceding identification. Its kernel therefore has zero socle, and is zero. Its image contains the entire socle of its injective target, so the inclusion is essential. If $F$ is injective, the inclusion splits; the complementary summand has zero socle and hence vanishes. Finiteness follows directly from the formula.
\end{proof}

The retractions $p_c$ are choices; the envelope is unique up to an isomorphism over $F$ \cite[Tag~08Y1]{Stacks}. Starting with $F^{(0)}=F$, define successively
\[
 \Inj^j(F)=\Inj(F^{(j)}),\qquad
 F^{(j+1)}=\Inj^j(F)/F^{(j)}.
\]
The quotient map followed by the next envelope inclusion gives the \emph{minimal injective resolution}
\begin{equation}\label{eq:resolution}
\begin{tikzcd}[column sep=small]
 0\arrow[r]&F\arrow[r,hook]&\Inj(F)\arrow[r,"\partial"]
 &\Inj^1(F)\arrow[r,"\partial"]&\Inj^2(F)\arrow[r]&\cdots .
\end{tikzcd}
\end{equation}
We also write $\Inj^0(F)=\Inj(F)$. Each differential vanishes on the socle, since its kernel is the essential subobject $F^{(j)}$. Conversely, a resolution by injectives whose differential vanishes on every socle is minimal: its cycles contain the socle, and hence are essential. These resolutions are unique up to an isomorphism of complexes extending the identity of $F$: comparison maps between injective resolutions are homotopy equivalences, and \cref{lem:mincomplex}(ii) makes them isomorphisms.

On the arrow $c\to a$, for instance, the minimal resolution of the system supported at $a$ is
\[
 0\longrightarrow(0\to k)\longrightarrow(k\xrightarrow{\id}k)
 \longrightarrow(k\to0)\longrightarrow0.
\]
Its differential is nonzero at $c$ but zero on the socle, which is supported at $a$. Thus minimality of an injective complex does not mean that its differential vanishes objectwise.

For comparison, the dual construction takes place in $\Vect_k^{\C^{\op}}$. For a contravariant system $P$, put
\[
 P_c=P(c)\Big/\sum_{f:c\to a\text{ nonisomorphism}}\im P(f).
\]
It is a right $kG_c$-module. Choosing equivariant sections gives the projective cover
\[
 \Proj(P)=\bigoplus_c P_c\otimes_{kG_c}k\C(-,c)
 \twoheadrightarrow P,
 \qquad v\otimes f\longmapsto P(f)v.
\]
Its kernel is superfluous: any submodule mapping onto $P$ must equal $\Proj(P)$, by nilpotence of the radical. For finite-dimensional systems, objectwise duality exchanges this cover with \eqref{eq:envelope} and exchanges projective and injective resolutions.

For a system $F$, its \emph{injective dimension} $\operatorname{id}_{\R}F$ is the least $d\geq0$ for which there is an exact sequence $0\to F\to I^0\to\cdots\to I^d\to0$ with every $I^j$ injective; it is $\infty$ if no such $d$ exists.

\begin{proposition}\label{prop:dimension}
The number
\[
 \cdim_k(\C)=\gldim(k\C)=\sup_{F\in\R}\operatorname{id}_{\R}F
\]
is finite and at most $L$, where $L$ is the maximal length of a chain of nonisomorphisms. The global dimensions of $k\C$ and $k\C^{\op}$ agree.
\end{proposition}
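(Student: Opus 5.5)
The plan is to bound the injective dimension of every system by induction on a filtration by the height function $[-]$. The first step handles the simple systems $S_c(U)$ directly. The second step extends to arbitrary finite-dimensional systems through their socle filtration, and to arbitrary systems by a limit argument.

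\emph{Step 1 (simples).} I claim that $\operatorname{id}_{\R}S_c(U)\le [c]$. The minimal injective resolution \eqref{eq:resolution} begins $0\to S_c(U)\to\underline{U}\to F^{(1)}\to0$, where $\underline U=\coind{c}{U}$ by \cref{prop:envelope}. By \eqref{eq:coinduced}, $\underline U(a)=\Hom_{kG_c}(k\C(a,c),U)$ is nonzero only when there is a morphism $a\to c$. Since $\underline U(c)=U$, the quotient $F^{(1)}$ vanishes at $c$ and is supported on objects $a$ with a nonisomorphism $a\to c$, so $[a]>[c]$.

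This is the opposite direction from the one needed, so I reverse the height function. Put $h(c)$ equal to the maximal length of a chain of nonisomorphisms ending at $c$, that is, $c_0\to\cdots\to c_h=c$. Then every composition factor of $F^{(1)}$ is a simple $S_a(V)$ with $h(a)<h(c)$. Induct on $h$. If $h(c)=0$, there are no nonisomorphisms into $c$, so $\underline U=S_c(U)$ is already injective. For the inductive step, I first show that a system $F$ whose composition factors all satisfy $\operatorname{id}\le m$ itself has $\operatorname{id}\le m$. This follows from the horseshoe/long exact $\Ext$ sequence: $\Ext^{m+1}(-,F)$ vanishes on all simples, hence on everything by dévissage on the source. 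Note that injective dimension is detected by $\Ext^{m+1}(S,-)=0$ for simple $S$, since $k\C$ is finite-dimensional, by Baer's criterion and the finite length of cyclic modules. Applying this gives $\operatorname{id}F^{(1)}\le h(c)-1$, and therefore $\operatorname{id}S_c(U)\le h(c)\le L$.

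\emph{Step 2 (all systems).} By the simple-test criterion just mentioned, $\operatorname{id}F\le L$ holds for every $F$ as soon as $\Ext^{L+1}(S,F)=0$ for all simples $S$. I compute this $\Ext$ using the projective cover of $S$, described dually in the text; it is finitely generated, so $\Ext^*(S,-)$ commutes with filtered colimits. Every system is the filtered union of its finite-dimensional subsystems, so it suffices to treat finite-dimensional $F$. These have finite length, and the result follows by dévissage from Step 1. Hence $\gldim k\C=\sup\operatorname{id}F\le L$, which gives finiteness.

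\emph{Step 3 (opposite category).} The category $\C^{\op}$ is again good with the same $L$. For finite-dimensional modules, objectwise duality $F\mapsto F^\vee$ is an exact anti-equivalence between $\R$ and $\Vect_k^{\C^{\op}}$ that exchanges injectives and projectives, as noted after \cref{prop:envelope}. It therefore gives $\Ext^n_{k\C}(S,T)\cong\Ext^n_{k\C^{\op}}(T^\vee,S^\vee)$ for simples $S,T$. For a finite-dimensional algebra, the global dimension equals $\sup\{n:\Ext^n(S,T)\ne0\text{ for some simples }S,T\}$. Hence the two global dimensions agree.

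The main obstacle is Step 1: getting the direction of the height function right and carefully justifying that the cokernel $F^{(1)}$ lives strictly lower. The remaining steps are standard homological algebra for finite-dimensional algebras.
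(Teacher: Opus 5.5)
Your proof is correct, but it works on the injective side, whereas the paper's proof works on the projective side. The paper bounds projective dimension for all modules at once. With $D=\prod_c kG_c$ and $J$ the ideal of nonisomorphisms, it uses the normalized relative bar resolution $B_r=k\C\otimes_D J^{\otimes_D r}\otimes_D F$. Its terms are projective because $D$ is semisimple, and they vanish for $r>L$ because a nonzero balanced tensor requires a composable chain of $r$ nonisomorphisms. This handles every module, finite-dimensional or not, with no d\'evissage, Baer criterion, or colimit argument. The comparison with injective dimensions and with $k\C^{\op}$ is then made by objectwise duality on finite-dimensional modules, exactly as in your Step~3.

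You instead use the explicit envelopes $\coind{c}{U}$ of \cref{prop:envelope} and induct on $h(c)$, the longest chain of nonisomorphisms ending at $c$. This gives a sharper, object-by-object bound $\operatorname{id}_{\R}S_c(U)\le h(c)$. It also shows that the minimal resolution of $S_c(U)$ descends strictly in $h$, which fits the minimal injective resolutions used throughout the paper; for the square, $h(1)=2$ is attained by \eqref{eq:square-resolution}. The price is the standard homological input you have to import.

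Three small repairs:
\begin{enumerate}
\item Your opening claim $\operatorname{id}_{\R}S_c(U)\le[c]$ is false as written: on the arrow $c\to a$ one has $[a]=0$ but $\operatorname{id}_{\R}S_a(k)=1$. State the claim with $h$ from the start.
\item In Step~2, the projective cover of $S$ alone does not make $\Ext^*(S,-)$ commute with filtered colimits. You need a resolution of $S$ by finitely generated projectives, which exists because $k\C$ is finite-dimensional and hence Noetherian.
\item In your d\'evissage lemma, it is cleaner to run the long exact sequence in the second variable along a composition series of $F$. This gives $\Ext^{m+1}(X,F)=0$ for every $X$ directly.
\end{enumerate}
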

\begin{proof}
We construct a projective resolution of length at most $L$ for every left module $F$. Recall the normalized relative bar complex for a split algebra $R=D\oplus J$, where $D$ is a subalgebra and $J$ an ideal. Its terms and augmentation are
\[
 B_r(R,D;F)=R\otimes_D J^{\otimes_D r}\otimes_D F\quad(r\geq0),
 \qquad J^{\otimes_D0}=D.
\]
Write $\otimes$ for $\otimes_D$ in the following formulas. The augmentation is $b_0:B_0\to F$, $b_0(a\otimes v)=av$. For $r\geq1$, the differential is
\begin{align*}
 &b_r(a\otimes j_1\otimes\cdots\otimes j_r\otimes v)\\
 &\quad=aj_1\otimes j_2\otimes\cdots\otimes j_r\otimes v\\
 &\qquad+\sum_{i=1}^{r-1}(-1)^i
 a\otimes j_1\otimes\cdots\otimes(j_ij_{i+1})\otimes\cdots\otimes j_r\otimes v\\
 &\qquad+(-1)^r a\otimes j_1\otimes\cdots\otimes j_{r-1}\otimes(j_rv).
\end{align*}
Adjacent multiplications cancel in pairs, giving $b_{r-1}b_r=0$. If $\bar a$ is the $J$-component of $a$, a $D$-linear contraction of the augmented complex is
\begin{align*}
 s_{-1}(v)&=1\otimes v,\\
 s_r(a\otimes j_1\otimes\cdots\otimes j_r\otimes v)
 &=1\otimes\bar a\otimes j_1\otimes\cdots\otimes j_r\otimes v
 \qquad(r\geq0).
\end{align*}
For $r=0$ the second formula reads $s_0(a\otimes v)=1\otimes\bar a\otimes v$. In $b_{r+1}s_r+s_{r-1}b_r$, the internal multiplication terms cancel. Balancing over $D$ identifies the remaining terms with
\[
 \bar a\otimes j_1\otimes\cdots\otimes j_r\otimes v
 +(a-\bar a)\otimes j_1\otimes\cdots\otimes j_r\otimes v,
\]
whose sum is the original tensor; the same calculation applies at $r=0$. Also $b_0s_{-1}=\id_F$. Thus the complex is exact, although the contraction need not be $R$-linear.

Apply this construction to $R=k\C$, $D=\prod_c kG_c$, and the ideal $J$ of nonisomorphisms. Each $B_r$ is induced from a $D$-module and is projective because $D$ is semisimple. For $r>L$ it vanishes: a nonzero balanced tensor of morphisms requires a composable chain of $r$ nonisomorphisms. Thus every module has projective dimension at most $L$. The same argument applies to the opposite category. For a finite-dimensional algebra, global dimension is detected by Ext between finite-dimensional modules; objectwise duality identifies these Ext groups with those over the opposite algebra. The two global dimensions therefore agree, as do the suprema of projective and injective dimensions.
\end{proof}

The notation $\cdim_k(\C)$ denotes the \emph{global dimension} of the category algebra, to distinguish it from the cohomological dimension of the constant representation, which may be smaller. In particular, the square in \cref{subsec:square} has a terminal object but satisfies $\cdim_k(\C)=2$.

\subsection{Tensor products of injective systems}
\begin{lemma}\label{lem:tensor}
Assume the $\otimes$-condition. If $\underline V=\bigoplus_{n\geq1}\underline V^n$ is degreewise injective, then every positive-degree component of the free graded-commutative algebra $\Sym \underline V$ is injective. The same holds for a tensor product of such algebras. No injectivity assertion about the constant degree-zero representation is needed.
\end{lemma}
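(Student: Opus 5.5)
The plan is to decompose $\Sym\underline V$ degreewise into finitely many summands, each a symmetric or exterior power of a single $\underline V^n$ or a tensor product of such powers, and to show each summand is injective. Fix a degree $m\geq1$. Since every $\underline V^n$ sits in degree $n\geq1$, the degree-$m$ component is
\[
 (\Sym\underline V)^m=\bigoplus_{\substack{(r_n)\ne0\\ \sum_n n r_n=m}}\ \bigotimes_{n}\Sym^{r_n}_{\pm}\underline V^n ,
\]
where $\Sym^{r}_{\pm}$ means the symmetric power for even $n$ and the exterior power for odd $n$, and all tensor products are objectwise. The sum is finite, and the factors with $r_n=0$ equal the constant system $\kk$. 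Those factors can be dropped, because at least one $r_n$ is positive. This is the reason the statement never needs injectivity of $\kk$. A finite direct sum of injectives is injective, so it suffices to treat each summand $\bigotimes_{n:\,r_n>0}\Sym^{r_n}_{\pm}\underline V^n$.

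\emph{Step 1: tensor powers.} By the $\otimes$-condition and induction on $r$, the objectwise tensor power $(\underline V^n)^{\otimes r}$ is injective for every $r\geq1$. Likewise any finite tensor product of the injectives $(\underline V^{n})^{\otimes r_n}$ with $r_n\geq1$ is injective.

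\emph{Step 2: symmetric and exterior powers as retracts.} Since $\operatorname{char}k=0$, the symmetrizer $e=\frac1{r!}\sum_{\sigma\in\Sigma_r}\pm\sigma$ (with the Koszul sign) is an idempotent endomorphism of $(\underline V^n)^{\otimes r}$. It is natural in the object $c$, because the structure maps $\underline V^n(f)^{\otimes r}$ commute with permutations of tensor factors. Its image is $\Sym^r_{\pm}\underline V^n$, so this is a direct summand in $\R$ of an injective system, and hence injective. The same idempotent argument, applied factorwise with the product idempotent $\bigotimes_n e_n$, shows that $\bigotimes_n\Sym^{r_n}_{\pm}\underline V^n$ is a direct summand of $\bigotimes_n(\underline V^n)^{\otimes r_n}$. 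The latter is injective by Step 1, which finishes the argument for a single algebra.

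\emph{Step 3: tensor products of algebras.} For $\Sym\underline V\otimes\Sym\underline W$ with both generator systems degreewise injective, note that $\Sym\underline V\otimes\Sym\underline W\cong\Sym(\underline V\oplus\underline W)$, and $\underline V\oplus\underline W$ is again degreewise injective in positive degrees. The claim therefore reduces to the case already treated. Alternatively, in degree $m\geq1$ each summand is $A^p\otimes B^q$ with $p+q=m$. Such a summand is $A^p$, $B^q$, or a tensor product of two positive-degree injectives, and is injective in each case.

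The only delicate point I expect is Step 2, namely checking that the symmetrization idempotent really is a morphism of systems and that the splitting uses characteristic zero. Both are immediate from naturality of the symmetry isomorphism and invertibility of $r!$, so the argument should go through without obstruction.
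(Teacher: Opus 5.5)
Your proof is correct and follows essentially the same route as the paper. Both arguments write each homogeneous component as a sum of tensor products of symmetric and exterior powers, exhibit these as direct summands of tensor powers via the signed averaging idempotent (using characteristic zero), and invoke the $\otimes$-condition. The one difference is that you note the degree-$m$ decomposition is a finite sum, since all generators have degree at least one, so you avoid the fact the paper cites that arbitrary direct sums of injectives over the Noetherian ring $k\C$ are injective.
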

\begin{proof}
Each homogeneous component is a direct sum of direct summands of finite tensor products of the $\underline V^n$: use the signed averaging idempotent for symmetric powers. Tensor products are injective by hypothesis. Direct sums of injectives are injective over the Noetherian ring $k\C$.
\end{proof}

\begin{example}[Meet semilattices]\label{ex:meet}
For a finite poset, write $\Ic{c}=\coind{c}{k}$. Explicitly,
\[
 \Ic{c}(a)=\begin{cases}k,&a\leq c,\\0,&a\nleq c,\end{cases}
 \qquad
 \Ic{c}(a\leq b)=\begin{cases}\id_k,&b\leq c,\\0,&b\nleq c.\end{cases}
\]
If every nonempty intersection of two principal down-sets is principal, then
\[
 \Ic{c}\otimes \Ic{d}=\Ic{c\wedge d}
\]
when the intersection is nonempty, and is zero otherwise. By \eqref{eq:envelope}, every injective on a finite poset is a direct sum of copies of the $\Ic{c}$. Tensor products distribute over these sums, and direct sums of injectives are injective. Such posets therefore satisfy the $\otimes$-condition; in particular, so do finite meet semilattices and the commutative square.
\end{example}

\begin{example}[Subdivisions and stratified homotopy theory]\label{ex:subdivision}
Let $P$ be a finite nonempty poset. Its \emph{subdivision} $\sd(P)$ is the poset of nonempty strict chains in $P$, ordered by inclusion. It is finite and has only identity endomorphisms, hence is good. The injectives of \cref{ex:meet} satisfy
\[
 \Ic{\sigma}\otimes_k\Ic{\tau}\cong
 \begin{cases}
 \Ic{\sigma\cap\tau},&\sigma\cap\tau\ne\varnothing,\\
 0,&\sigma\cap\tau=\varnothing.
 \end{cases}
\]
Indeed, a nonempty intersection of chains is a chain; its nonempty subchains form the common support, with identity structure maps. Thus \cref{ex:meet} proves the $\otimes$-condition. The empty chain need not be adjoined.

For $P=\{0<1\}$, the category and a contravariant space diagram on it are respectively
\[
 \sd(P)=\bigl(\{0\}\longrightarrow\{0<1\}\longleftarrow\{1\}\bigr),
 \qquad X_0\longleftarrow X_{01}\longrightarrow X_1.
\]
In stratified homotopy theory, this span records the strata and their homotopy link. More generally, the d\'ecollage description of abstract $P$-stratified homotopy types uses coherent contravariant space diagrams on $\sd(P)$ satisfying the Segal condition: each flag space is the iterated homotopy pullback of its consecutive link spaces over the intermediate strata \cite[Definition~1.1.4 and Theorem~1.1.7]{Haine23}. Polynomial forms give covariant DGCA diagrams on $\sd(P)$. With simply connected values of finite rational type and coherent basepoints, both minimal-model constructions apply. Thus our current theory opens up a possibility for the DG-rational theory for stratified homotopy theory. 
\end{example}

\Needspace{7\baselineskip}
\begin{example}[A necessary restriction]\label{ex:tensorfailure}
The $\otimes$-condition does not hold for every finite poset. Consider the order generated by
\[
 r<a,b<c,d,
\]
where $a,b$ are incomparable, as are $c,d$. Then $\Ic{c}\otimes \Ic{d}$ is supported on $\{r,a,b\}$, is one-dimensional at each of these objects, and has identity structure maps. Its socle has one copy of $k$ at $a$ and at $b$. Its injective envelope is $\Ic{a}\oplus \Ic{b}$, whose value at $r$ has dimension two. The tensor product is therefore not injective.
\end{example}

For a finite group $G$, the orbit category $\Orb_G$ has objects $G/H$ and equivariant maps as morphisms. Explicitly,
\begin{align*}
 \Orb_G(G/H,G/K)&=\{aK\in G/K\mid a^{-1}Ha\subset K\},\\
 \Aut(G/H)&\cong W_G(H)=N_G(H)/H.
\end{align*}
We may choose one subgroup in each conjugacy class to obtain a skeleton. A pointed $G$-space determines the contravariant fixed-point diagram $\underline X(G/H)=\operatorname{Sing}(X^H)$; this is the motivating example of $\Fun(\C^{\op},\sSet_*)$.

\begin{proposition}[Finite orbit categories]\label{prop:orbit}
The orbit category $\Orb_G$ of every finite group $G$ is good and satisfies the $\otimes$-condition.
\end{proposition}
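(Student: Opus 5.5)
\textbf{Goodness.} The set $\Orb_G(G/H,G/K)$ is a subset of the finite set $G/K$, so $\Orb_G$ has finitely many morphisms once we pass to a skeleton. An endomorphism of $G/H$ is given by some $aH$ with $a^{-1}Ha\subset H$. Since $H$ is finite and conjugation preserves order, this inclusion is an equality, so $a\in N_G(H)$. The endomorphism is therefore an element of $W_G(H)$ and is invertible.

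\textbf{The $\otimes$-condition.} I will not work with $\coind{G/H}{U}$ directly. Instead I will exhibit a family of injective systems that is closed under tensor products and whose direct summands contain all the injectives.

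For a finite $G$-set $X$, define the covariant system
\[
 T_X(G/L)=\Hom_k\bigl(k[X^L],k\bigr).
\]
A morphism $G/L\to G/L'$, given by $aL'$ with $a^{-1}La\subset L'$, induces the map $X^{L'}\to X^L$, $x\mapsto ax$. Dualizing this gives a covariant structure map, and functoriality follows from the corresponding identity on fixed points. Three formal properties follow at once from $(X\sqcup Y)^L=X^L\sqcup Y^L$ and $(X\times Y)^L=X^L\times Y^L$:
\[
 T_{X\sqcup Y}\cong T_X\oplus T_Y,\qquad T_X\otimes_k T_Y\cong T_{X\times Y}.
\]
The second uses finiteness of $X^L$ and $Y^L$.

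The key step is to show that $T_{G/H}\cong\coind{G/H}{kW_G(H)}$. First, identify $\Orb_G(G/L,G/H)$ with $(G/H)^L$ by sending a map to the image of the coset $eL$. I must check that this identification intertwines precomposition with the action $x\mapsto ax$ above, and that the right $W_G(H)$-action by postcomposition becomes $gH\mapsto gnH$. Second, this right action of $W_G(H)$ on $(G/H)^L$ is free. For any finite free right $W$-set $S$ we have $\Hom_{kW}(k[S],kW)\cong\Hom_k(k[S],k)$, naturally in $S$ for $W$-maps; this is the usual Frobenius isomorphism, using that $kW$ is self-dual. Inserting both facts into \eqref{eq:coinduced} gives $T_{G/H}\cong\coind{G/H}{kW_G(H)}$.

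This system is injective, as observed after \eqref{eq:coinduced}. Decomposing an arbitrary finite $G$-set into orbits then shows that every $T_X$ is injective.

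\textbf{Conclusion.} By \cref{prop:envelope}, every injective system is a direct sum of systems $\coind{G/H}{U}$. Each simple $kW_G(H)$-module $U$ is a direct summand of $kW_G(H)$. Since $\coind{G/H}{-}$ is additive, $\coind{G/H}{U}$ is a summand of $T_{G/H}$. Tensor products distribute over direct sums and carry summands to summands, so $I\otimes_k J$ is a direct sum of summands of systems
\[
 T_{G/H}\otimes T_{G/K}\cong T_{G/H\times G/K}.
\]
Each of these is injective by the key step. Direct sums of injectives are injective over the Noetherian ring $k\C$, which completes the argument.

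\textbf{Main obstacle.} I expect the only real difficulty to be the naturality bookkeeping in the key step. The isomorphism $\Hom_{kW}(k[S],kW)\cong k[S]^\vee$ must be compatible with the structure maps coming from $\Orb_G$, and the variances of left and right actions must match those in \eqref{eq:coinduced}. I would check this by writing the isomorphism explicitly as $\phi\mapsto\varepsilon\circ\phi$, where $\varepsilon$ is the coefficient of the identity element of $W$, and then verifying equivariance directly.
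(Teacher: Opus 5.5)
Your proof is correct and is essentially the paper's argument carried out on the injective side. The paper shows that tensor products of the projective representables $k\Orb_G(-,G/H)$ are again sums of representables, via the orbit decomposition of $G/H\times G/K$, and then dualizes; your $T_X$ are exactly the objectwise duals of $k\Hom_G(-,X)$, and your Frobenius identification $T_{G/H}\cong\coind{G/H}{kW_G(H)}$ makes that duality step explicit. One point to make explicit: the reduction from arbitrary $U$ in $\coind{G/H}{U}$ to simple $U$ holds because $\coind{G/H}{-}$ commutes with arbitrary direct sums, since each $k\Orb_G(G/L,G/H)$ is finite-dimensional.
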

\begin{proof}
Endomorphisms of a finite transitive $G$-set are invertible, and a skeleton of $\Orb_G$ is finite. For contravariant representations, the projectives are summands of sums of the linear representables $k\Orb_G(-,G/H)$. Their tensor products are projective because
\begin{align*}
 k\Orb_G(-,G/H)\otimes k\Orb_G(-,G/K)
 &\cong k\Hom_G(-,G/H\times G/K),\\
 G/H\times G/K&\cong\coprod_{[g]\in H\backslash G/K}G/(H\cap gKg^{-1}).
\end{align*}
Duality proves the assertion for finite-dimensional injectives. Every injective is a direct sum of finite-dimensional indecomposable injectives, so distributivity of tensor products and the Noetherian property give the general case.
\end{proof}

\subsection{Minimal complexes of injectives}
All complexes are cochain complexes: their differentials have degree $+1$. A complex is \emph{bounded below} if it is zero in every sufficiently negative degree, and \emph{acyclic} if its cohomology vanishes in every degree. Write $Z^nI=\ker(d^n:I^n\to I^{n+1})$ for its system of cycles.

\begin{definition}\label{def:mincomplex}
A bounded-below complex $I$ of injective systems is \emph{minimal} if
\[
 d^n(\Soc I^n)=0\qquad\text{for every }n.
\]
In other words, each simple subsystem of $I^n$ consists of cycles. Equivalently, $Z^nI\subset I^n$ is an injective envelope in every degree: the inclusion is essential exactly when $Z^nI$ contains $\Soc I^n$.
\end{definition}

The word \emph{minimal} expresses the absence of a removable contractible summand. A two-term complex $U\xrightarrow{\id}U$ has zero cohomology but a nonzero socle differential whenever $U\ne0$. \Cref{lem:mincomplex}(i) shows that these are exactly the summands that must be removed to obtain a minimal complex. For vector spaces alone every space equals its socle, so minimality says $d=0$; for systems it is the weaker condition in \cref{def:mincomplex}.

We first recall the homotopical property of injective complexes used in the proof. For complexes $K,I$, the Hom complex has
\[
 \Hom_{\R}^{r}(K,I)=\prod_j\Hom_{\R}(K^j,I^{j+r}),
 \qquad \delta\phi=d_I\phi-(-1)^r\phi d_K.
\]
Its degree-zero cycles are cochain maps. Two such maps $f,g$ are \emph{chain homotopic} if $f-g=d_Ih+hd_K$ for a degree-$-1$ map $h$; their homotopy classes form $H^0\Hom_{\R}^{\bullet}(K,I)$. A \emph{chain homotopy equivalence} is a cochain map admitting an inverse up to this homotopy.

\begin{definition}[K-injectivity]\label{def:kinjective}
Following the standard definition \cite[Definition~13.31.1, Tag~070H]{Stacks}, a complex $I$ is \emph{K-injective} if $\Hom_{\R}^{\bullet}(K,I)$ is acyclic for every acyclic complex $K$. Equivalently, every cochain map from an acyclic complex to $I$ or any of its shifts is chain homotopic to zero.
\end{definition}

\begin{lemma}\label{lem:kinjective}
Every bounded-below complex of injectives is K-injective. A quasi-isomorphism between such complexes is a chain homotopy equivalence.
\end{lemma}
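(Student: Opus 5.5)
We argue directly from the definition of K-injectivity: we show that every cochain map from an acyclic complex into a bounded-below complex of injectives is null-homotopic, and then derive the homotopy-equivalence statement from a mapping-cone argument. Since shifts of bounded-below complexes of injectives are again of this form, it suffices to treat degree-zero maps. Let $K$ be acyclic, $I$ bounded below with injective terms, say $I^j=0$ for $j<m$, and let $f:K\to I$ be a cochain map. We construct a homotopy $h^j:K^j\to I^{j-1}$ with $f^j=d_Ih^j+h^{j+1}d_K$ by induction on $j$, starting with $h^j=0$ for $j\le m$ (so that the identity holds trivially for $j<m$, where $I^j=0$).

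For the inductive step, assume $h^{j'}$ has been defined for $j'\le j$ with $f^{j'-1}=d_Ih^{j'-1}+h^{j'}d_K$ for all $j'\le j$. Consider $g=f^j-d_Ih^j:K^j\to I^j$. We check that $g$ vanishes on $Z^jK=\operatorname{im}(d_K:K^{j-1}\to K^j)$, using acyclicity of $K$:
\[
 g\,d_K=f^jd_K-d_Ih^jd_K=d_If^{j-1}-d_I(f^{j-1}-d_Ih^{j-1})=0 .
\]
Hence $g$ factors through $K^j/Z^jK\cong\operatorname{im}(d_K)\subset K^{j+1}$. By injectivity of $I^j$ in $\R$, the induced map extends along this monomorphism to $h^{j+1}:K^{j+1}\to I^j$ with $h^{j+1}d_K=g$, which is exactly the required identity in degree $j$. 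The only subtlety is the base case, which is handled by the bounded-below hypothesis: below $m$ everything vanishes. This proves that $\Hom^\bullet_{\R}(K,I)$ is acyclic, applying the same argument to shifts.

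For the second assertion, let $s:I\to I'$ be a quasi-isomorphism between bounded-below complexes of injectives. Its mapping cone $\Cone(s)$ is acyclic and is again a bounded-below complex of injectives, since finite direct sums of injectives are injective. By the first part, $\Hom^\bullet_{\R}(\Cone(s),\Cone(s))$ is acyclic, so $\id_{\Cone(s)}$ is null-homotopic and the cone is contractible. A standard lemma then says that a map with contractible cone is a chain homotopy equivalence; one can extract the homotopy inverse and the two homotopies from the components of the contracting homotopy. Alternatively, one can avoid this lemma. For any bounded-below injective $J$, applying $\Hom^\bullet(-,J)$ to the triangle $I\to I'\to\Cone(s)$ shows that $s^*:\Hom^\bullet(I',J)\to\Hom^\bullet(I,J)$ is a quasi-isomorphism. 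Taking $J=I$ yields $t:I'\to I$ with $ts\simeq\id_I$. Taking $J=I'$, the maps $st$ and $\id_{I'}$ have homotopic images $sts\simeq s$ under $s^*$, so $st\simeq\id_{I'}$. The main care needed is the sign bookkeeping in the cone; the lifting step itself is routine.
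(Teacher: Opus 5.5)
Your proposal is correct and follows essentially the same route as the paper. You build the null-homotopy degree by degree from the bottom, extending the residual map $f^j-d_Ih^j$ along $\operatorname{im}d_K\subset K^{j+1}$ using injectivity of $I^j$ and acyclicity of $K$. You then use the acyclic cone to show that precomposition with $s$ is a quasi-isomorphism of Hom complexes into $I$ and into $I'$, which produces the homotopy inverse exactly as the paper does; your alternative via contractibility of the cone is also valid.
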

\begin{proof}
We give the null-homotopy construction explicitly; see also \cite[Tag~070G]{Stacks}. Let $f:K\to I$ be a cochain map, with $K$ acyclic and $I^j=0$ for $j<N$. Set $h^j=0$ for $j\leq N$. Suppose that $h$ has been chosen through degree $n$ and that
\[
 f^{n-1}=d_I^{n-2}h^{n-1}+h^n d_K^{n-1}.
\]
The residual map $g^n=f^n-d_I^{n-1}h^n:K^n\to I^n$ satisfies
\[
 g^n d_K^{n-1}
 =d_I^{n-1}\bigl(f^{n-1}-h^n d_K^{n-1}\bigr)=0.
\]
Since $K$ is acyclic, $\ker d_K^n=\im d_K^{n-1}$. Hence $d_K^nx\mapsto g^n(x)$ defines a map $\im d_K^n\to I^n$. Injectivity of $I^n$ extends it to $h^{n+1}:K^{n+1}\to I^n$, giving $f^n=d_I^{n-1}h^n+h^{n+1}d_K^n$. Induction constructs the required homotopy. The same argument applies to every shift of $I$, proving K-injectivity.

\Needspace{9\baselineskip}
For a quasi-isomorphism $f:I\to J$, the \emph{mapping cone}
\[
 \Cone(f)^n=J^n\oplus I^{n+1},\qquad
 d(y,x)=(d_Jy+f(x),-d_Ix)
\]
is acyclic, by its long exact cohomology sequence. K-injectivity therefore implies that precomposition with $f$ is a quasi-isomorphism on Hom complexes with target $I$ or $J$: the cone of each induced map is a shift of the Hom complex from $\Cone(f)$. With target $I$, surjectivity on $H^0$ gives $g:J\to I$ with $gf\simeq\id_I$. With target $J$, injectivity on $H^0$ gives $fg\simeq\id_J$, since $(fg-\id_J)f\simeq0$.
\end{proof}

\Needspace{10\baselineskip}
\begin{lemma}\label{lem:mincomplex}
Let $I$ be a bounded-below complex of injective systems.
\begin{enumerate}
\item There is a decomposition of complexes $I=\underline W\oplus C$, where $\underline W$ is minimal and $C$ is a direct sum of two-term complexes $U\xrightarrow{\id}U$ with $U$ injective.
\item A quasi-isomorphism between bounded-below minimal complexes of injectives is an isomorphism.
\end{enumerate}
\end{lemma}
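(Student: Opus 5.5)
For (i) we split off contractible pieces one degree at a time, by induction from the bottom degree. Since $I$ is bounded below, fix $N$ with $I^j=0$ for $j<N$, and suppose the complex is already minimal below degree $n$. In degree $n$, consider the socle $\Soc I^n=\bigoplus_c S_c((I^n)_c)$. Choose a $G_c$-stable complement to $\ker d^n\cap(I^n)_c$ inside $(I^n)_c$, using semisimplicity of $kG_c$. Call the resulting semisimple subsystem $T\subset\Soc I^n$. Then $d^n|_T$ is injective and $T\cap Z^nI=0$.

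Let $U=\Inj(T)$ be its injective envelope. Since $I^n$ is injective, $T\hookrightarrow I^n$ extends to $U\to I^n$. This map is injective, because its kernel meets the essential subobject $T$ trivially. Hence $U$ is a direct summand of $I^n$. The composite $d^n:U\to I^{n+1}$ is injective for the same reason: its kernel meets $T$ in $\ker d^n\cap T=0$. By injectivity of $U$, the image $d^n(U)\cong U$ is a direct summand of $I^{n+1}$.

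Next we adjust complements so that they form a subcomplex. Choose a complement $I^{n+1}=d^n(U)\oplus I'^{\,n+1}$ with $d^{n+1}(I'^{\,n+1})$ unrestricted. Since $d^{n+1}d^n(U)=0$, the summand $d^n(U)$ is annihilated by $d^{n+1}$. In degree $n$, replace the complement $I'^{\,n}$ of $U$ by the subsystem
\[
 \{x-\rho d^n x : x\in I'^{\,n}\},
\]
where $\rho:I^{n+1}\to U$ is the retraction inverse to $d^n$ on $d^n(U)$ along $I'^{\,n+1}$. This yields a splitting of complexes $I=(U\xrightarrow{\id}U)\oplus I''$, where $I''$ agrees with $I$ outside degrees $n,n+1$. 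In $I''$, the socle in degree $n$ now consists of cycles. To check this, note that a socle element of $I''^{\,n}$ is a socle element of $I^n$, and we have removed all of $T$. Verifying that the socle of the modified complement lies in $\ker d$ requires care: socle elements $s$ of $I''^{\,n}$ decompose inside $\Soc I^n=(\Soc I^n\cap Z)\oplus T$, and the $T$-component must vanish, because it is detected by the projection to $U$. I expect this bookkeeping to be the main obstacle, and I would handle it by choosing the complement of $U$ to contain $\Soc I^n\cap Z^nI$ from the start. Iterating over $n$ is legitimate because each step changes only degrees $n$ and $n+1$ and leaves the lower degrees minimal. Taking the limit gives $\underline W$, and $C$ is the direct sum of the removed pieces. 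This also works in infinitely many degrees, since each degree stabilizes after finitely many steps.

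For (ii), let $f:I\to J$ be a quasi-isomorphism of minimal bounded-below complexes. By \cref{lem:kinjective}, $f$ has a homotopy inverse $g$, so it suffices to show that a self-map $\phi=gf\simeq\id$ of a minimal complex is an isomorphism; the same argument applies to $fg$. Write $\phi=\id+dh+hd$. On $\Soc I^n$, the term $hd$ vanishes by minimality. The term $dh$ maps $\Soc I^n$ into $d(I^{n-1})$, and $h(\Soc I^n)\subset I^{n-1}$. Moreover $dh$ is a morphism of systems, so it sends the socle into the socle of $I^n$, and that socle is contained in $Z^n$. We then show that the restriction of $\phi^n$ to $\Soc I^n$ is injective. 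Suppose $\phi(s)=0$. Then $s=-dh(s)$ is a coboundary in the socle. Applying the same decomposition one degree lower and using the nilpotence of the radical gives $s=0$. More concretely, we argue with the identity $\phi\equiv\id$ modulo $dh$ on the socle, together with the fact that $\phi$ acts on the semisimple socles, $\Soc I^n\cap B^n\subset\Soc I^n$; the most robust route is to prove that the induced map on $\Soc I^n$ is unipotent-invertible after composing finitely many times. Once $\phi^n|_{\Soc I^n}$ is injective, $\ker\phi^n$ has zero socle and hence vanishes. Then $\phi^n$ is a split injection of injectives. Equality of finite lengths, or the same argument applied to $fg$, gives surjectivity in each degree, so $f$ is an isomorphism.
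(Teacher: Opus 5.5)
Your part (i) is the paper's argument. Your corrected complement $\{x-\rho d^nx\}$ is exactly $\ker(pd^n)$, where $p$ is the projection onto $d^n(U)$ along $I'^{\,n+1}$; this is the paper's $A$. The bookkeeping you flag is immediate. First, $\ker(pd^n)$ contains all of $Z^n$, hence also $d^{n-1}(I^{n-1})$, which you need, and did not check, for a splitting of complexes. Second, $d^n(\Soc A)\subset d^n(\Soc I^n)=d^n(T)\subset d^n(U)$, while $d^n(A)\subset I'^{\,n+1}$, so $d^n(\Soc A)=0$.

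The genuine gap is in (ii). You never show that $\phi=gf$ is injective on $\Soc I^n$. After writing $s=-dh(s)$, the appeals to nilpotence of the radical and to ``unipotent-invertible after composing finitely many times'' are not arguments. Knowing that $s$ is a socle coboundary also proves nothing: in a minimal complex socle elements can be coboundaries. For instance, in the minimal resolution $(k\xrightarrow{\id}k)\to(k\to0)$ on the arrow $c\to a$, the socle of the second term is the image of $d$.

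The missing observation is that each $h^n\colon I^n\to I^{n-1}$ is a morphism of systems, so it maps $\Soc I^n$ into $\Soc I^{n-1}$, where $d$ vanishes by minimality. You wrote only $h(\Soc I^n)\subset I^{n-1}$ and lost exactly this. Hence both $dh$ and $hd$ vanish on the socle, $gf=\id$ on $\Soc I$ exactly, and likewise $fg=\id$ on $\Soc J$.

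Your surjectivity step also fails as written. The lemma has no finite-dimensionality hypothesis, so ``equality of finite lengths'' is unavailable. Injectivity of $fg$ only shows that $g$ is injective, not that $f$ is surjective. With the socle identities in hand, $f^n$ restricts to an isomorphism $\Soc I^n\to\Soc J^n$. Then $\ker f^n$ has zero socle and vanishes. The image $f^n(I^n)$ is an injective summand of $J^n$ containing $\Soc J^n$, so its complement has zero socle and is therefore zero.
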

\begin{proof}
For (i), suppose the residual complex is already minimal below degree $n$. A module map sends a simple submodule either to zero or to a simple submodule, so $d^n$ maps $\Soc I^n$ into $\Soc I^{n+1}$. Choose a complement $T$ to its kernel in the semisimple system $\Soc I^n$. Extend $T\hookrightarrow I^n$ to a map $U=\Inj(T)\to I^n$, using injectivity of $I^n$. Its kernel has zero intersection with the essential subobject $T$, so it is zero. We regard $U$ as a submodule of $I^n$. Likewise, $d^n|_U$ is injective: a nonzero kernel would meet $T$, on which $d^n$ is injective.

Here is the splitting of the whole complex. Since $d^nU\cong U$ is injective, choose a retraction $p:I^{n+1}\to d^nU$ and put
\[
 A=\ker(pd^n),\qquad B=\ker p.
\]
The map $pd^n$ restricts to the isomorphism $d^n:U\to d^nU$, so
\[
 I^n=U\oplus A,\qquad I^{n+1}=d^nU\oplus B,
 \qquad d^n=\begin{pmatrix}d^n|_U&0\\0&d^n|_A\end{pmatrix}.
\]
Moreover, $d^{n-1}(I^{n-1})\subset A$ and $d^{n+1}(d^nU)=0$, by $d^2=0$. Thus $U\xrightarrow{d^n}d^nU$ is a direct summand \emph{as a complex}, and identifying its two terms gives the disk $U\xrightarrow{\id}U$. The remaining terms $A,B$ are injective, being direct summands of injectives.

The residual differential vanishes on $\Soc A$. Indeed, $\Soc A\subset\Soc I^n$ and $d^n(\Soc I^n)=d^nT\subset d^nU$, while $d^n(A)\subset B$; the intersection $d^nU\cap B$ is zero. Starting in the lowest degree and repeating this construction proves (i). There is one step per degree, and each degree is altered only at the two adjacent steps, so the decomposition is well defined even without a finite-dimensionality assumption.

For (ii), let $f:I\to J$ be a quasi-isomorphism of minimal complexes. By \cref{lem:kinjective}, choose a homotopy inverse $g:J\to I$. On $\Soc I$, both terms of $d_Ih+hd_I$ vanish: $d_I$ is zero there, and $h$ sends socles to socles. Hence $gf\simeq\id_I$ restricts to the equality $gf=\id$ on $\Soc I$. Similarly $fg=\id$ on $\Soc J$. Thus $f^n:\Soc I^n\to\Soc J^n$ is an isomorphism for every $n$.

Now $\ker f^n\cap\Soc I^n=0$. Essentiality of $\Soc I^n\subset I^n$ forces $\ker f^n=0$, so $f^n$ is injective. Its image is isomorphic to the injective module $I^n$; therefore $J^n=f^n(I^n)\oplus E$ for some $E$. Surjectivity on socles gives $\Soc J^n\subset f^n(I^n)$, whence $\Soc E=0$. A nonzero system has nonzero socle, so $E=0$. Every $f^n$ is therefore an isomorphism, as required.
\end{proof}

\section{Homotopy theory and elementary extensions}\label{sec:homotopy}
\subsection{The algebraic setting}
We use DGCAs augmented over $k$ and write $A_+=\ker(A\to\kk)$ for the augmentation ideal of a diagram, where $\kk$ is the constant system with value $k$. Morphisms preserve augmentations. We write
\[
 \DGCA_k(\C)=\Fun(\C,\DGCA_k),\qquad
 \DGCA_k^{\geq0}(\C)=\Fun(\C,\DGCA_k^{\geq0}).
\]
The superscript $\geq0$ refers to the underlying grading; differentials have degree $+1$. Weak equivalences of diagrams are objectwise quasi-isomorphisms. A diagram is \emph{cohomologically simply connected} if $H^0(A)=\kk$ and $H^1(A)=0$. Finite type means finite-dimensional cohomology in each degree at every object. All the models constructed below satisfy the stronger conditions
\begin{equation}\label{eq:reduced}
 \M^0=\kk,\qquad \M^1=0,\qquad \M^n=0\quad(n<0).
\end{equation}

When a free diagram is written $\M=\Sym\underline V$, its value on an arrow $f:c\to a$ is the DGCA diagram
\[
\begin{tikzcd}[column sep=large]
 (\Sym\underline V(c),d_c)
 \arrow[r,"\Sym\underline V(f)"]
 &(\Sym\underline V(a),d_a).
\end{tikzcd}
\]
The arrow sends a generator $v$ to $\underline V(f)(v)$ and commutes with the displayed differentials. This convention applies to both classes of minimal systems below.

For clarity about fibrancy, we use the injective model structure on unbounded complexes in $\R$, and transfer it to augmented DGCAs along
\begin{equation}\label{eq:transfer-adjunction}
\begin{tikzcd}[column sep=large]
 \Ch(\Vect_k^{\C})_{\mathrm{inj}}
 \arrow[r,shift left=1.5,"\Sym"]
 &\DGCA_k(\C)_{\mathrm{trinj}}
 \arrow[l,shift left=1.5,"\mathcal U"] .
\end{tikzcd}
\end{equation}
Here $\Sym(\underline V)=\kk\oplus\Sym^{\geq1}\underline V$ has its canonical augmentation, and $\mathcal U(A)=A_+$.
The use of unbounded complexes here is only an ambient model-category convention; all relevant objects have bounded-below augmentation ideals.

\begin{proposition}\label{prop:model}
The transferred model structure exists. Its weak equivalences are objectwise quasi-isomorphisms, and its fibrations are the surjections whose augmentation-ideal kernels are K-injective complexes of injectives. In particular, a bounded-below augmented diagram is fibrant exactly when its augmentation ideal is degreewise injective. Cofibrations have the left lifting property against trivial fibrations.
\end{proposition}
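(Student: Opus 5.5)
We will apply the standard transfer theorem for cofibrantly generated model structures (Kan's criterion, or Hinich's version for DG-operad algebras in characteristic zero) to the adjunction \eqref{eq:transfer-adjunction}. The first step is to check that the source is cofibrantly generated with small domains. The injective model structure on $\Ch(\R)$ exists because $\R$ is a Grothendieck abelian category; by Hovey/Beke it is combinatorial. Its cofibrations are the monomorphisms, its weak equivalences the quasi-isomorphisms, and its fibrant objects the K-injective complexes of injectives. Since $\R$ is the module category of the finite-dimensional algebra $k\C$, we can take generating (trivial) cofibrations to be monomorphisms between complexes of bounded cardinality. The category $\DGCA_k(\C)$ is locally presentable, so smallness of $\Sym$ of these domains holds automatically.

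The second step is the acyclicity condition for the transfer: every relative cell complex built from $\Sym(J)$ must be a weak equivalence, where $J$ is the set of generating trivial cofibrations. Here we use characteristic zero. For a trivial cofibration $i:K\to L$ in $\Ch(\R)$, the pushout $A\to A\otimes_{\Sym K}\Sym L$ has a filtration by word length in the cokernel $L/K$. Its associated graded is $A\otimes\Sym^{m}(L/K)$. Each piece $\Sym^m(L/K)=((L/K)^{\otimes m})_{\Sigma_m}$ is acyclic objectwise because $L/K$ is acyclic, tensor products over a field preserve acyclicity by Künneth, and coinvariants of a finite group are exact in characteristic zero. Everything is computed objectwise over $k$, so the argument reduces to the classical one for DGCAs over $\Q$. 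Transfinite compositions preserve quasi-isomorphisms because filtered colimits are exact in $\R$. We expect this to be the main obstacle, mostly in bookkeeping: the path-object argument is unavailable because not every object is fibrant, so the filtration argument has to be done carefully for unbounded complexes.

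The third step identifies the classes. By construction, weak equivalences are the maps $f$ with $\mathcal U f$ a quasi-isomorphism. Since $A=\kk\oplus A_+$, these are exactly the objectwise quasi-isomorphisms. Fibrations are the maps $f$ with $\mathcal U f$ an injective fibration. In $\Ch(\R)_{\mathrm{inj}}$ the fibrations are the epimorphisms with K-injective kernel consisting of injectives. Moreover $\ker(\mathcal U f)=\ker f$, so we obtain the stated description. For fibrancy, the map to $\kk$ has kernel $A_+$, so $A$ is fibrant iff $A_+$ is a K-injective complex of injectives. When $A_+$ is bounded below, \cref{lem:kinjective} shows that degreewise injectivity already implies K-injectivity. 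The last clause, that cofibrations have the left lifting property against trivial fibrations, is just the definition of cofibrations in a transferred structure.
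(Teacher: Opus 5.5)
Your proposal is correct and follows essentially the same route as the paper: transfer along $\Sym\dashv\mathcal U$ from the combinatorial injective structure, then read off the fibrations from the injective structure and use \cref{lem:kinjective} for the bounded-below case. The one difference is in the acyclicity condition, which both arguments check objectwise in characteristic zero. You prove it directly via the word-length filtration, whose associated graded is $A\otimes\Sym^m(L/K)$, whereas the paper cites the fact that $\Sym$ sends trivial cofibrations of complexes to trivial cofibrations of ordinary DGCAs.
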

\begin{proof}
The injective model structure on $\Ch(\R)$ is combinatorial, with monomorphisms as cofibrations. Let $J$ be a set of generating trivial cofibrations. Evaluation sends every map of $J$ to a trivial cofibration of complexes over $k$. In characteristic zero, the free commutative algebra functor sends such a map to a trivial cofibration of ordinary DGCAs. Evaluation preserves pushouts and transfinite compositions of augmented algebras. Consequently every relative $\Sym(J)$-cell map is an objectwise quasi-isomorphism. The small object argument gives the transfer. The description of fibrations follows from the injective structure on complexes; the bounded-below assertion uses \cref{lem:kinjective}.
\end{proof}

\begin{lemma}\label{lem:connective-replacement}
Every nonnegative augmented DGCA diagram admits a nonnegative fibrant replacement in \cref{prop:model}.
\end{lemma}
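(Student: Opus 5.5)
We will construct the replacement through a cell attachment that never creates negative-degree generators, and then show the resulting diagram is fibrant. Since the paper's fibrancy criterion concerns targets of fibrations, we aim for a factorization of $A\to\kk$ as a trivial cofibration $A\to \widehat A$ followed by a fibration, with $\widehat A$ nonnegative and $\widehat A_+$ degreewise injective. By \cref{prop:model}, a bounded-below diagram whose augmentation ideal is degreewise injective is fibrant, so these two properties of $\widehat A$ are all we need.

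The plan has three steps.

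\emph{Step 1.} Take a minimal injective resolution of the complex $A_+$ in $\Ch(\R)$ that stays in nonnegative degrees. Concretely, $A_+$ is a nonnegative complex, and we embed each $A_+^n$ into its envelope $\Inj(A_+^n)$ from \cref{prop:envelope}. Totalizing the resulting Cartan--Eilenberg-style double complex of injective resolutions (each of length at most $L$ by \cref{prop:dimension}) gives a bounded-below complex $I$ of injectives, concentrated in degrees $\geq0$, with a monomorphic quasi-isomorphism $A_+\hookrightarrow I$. Its cokernel $Q=I/A_+$ is an acyclic nonnegative complex.

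\emph{Step 2.} Realize this at the algebra level by attaching acyclic ``disk'' generators. For every injective summand $U$ sitting in degree $n\geq0$ that is needed, adjoin the free algebra $\Sym(U\xrightarrow{\id}U)$ on a disk placed in degrees $n-1,n$ whenever $n\geq1$, and use only disks with both ends in degree $\geq0$. Because disks are acyclic and the characteristic is zero, $\Sym$ of a disk is objectwise quasi-isomorphic to $\kk$. Hence each pushout $A\to A\otimes\Sym(D)$ is a trivial cofibration, being a relative $\Sym(J)$-cell map as in the proof of \cref{prop:model}. The attaching maps are chosen to kill the failure of injectivity in each degree.

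Iterate this countably many times, and inductively on the degree $n$. At stage $n$ the ideal is made injective in degrees $\leq n$, and later stages only add generators in degrees $\geq n$. This uses that products of generators have degree at least the larger of their degrees, with degree-zero generators handled separately. The colimit $\widehat A$ is nonnegative, and each degree stabilizes after finitely many stages.

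\emph{Step 3: degree zero, the main obstacle.} Here the argument is genuinely delicate, because products of degree-zero disk ends feed back into degree zero. We must check that $\widehat A^0_+$ becomes injective; the $\otimes$-condition is not assumed in this lemma, so we cannot appeal to \cref{lem:tensor}.

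The route we would try first avoids free generators in degree $0$ entirely. We make degree $0$ injective by embedding $A^0_+\hookrightarrow\Inj(A^0_+)$ and forming $\widehat A^0=\kk\oplus\Inj(A^0_+)$. We then need to justify extending the multiplication: either with a square-zero product, compensated by adding degree-$1$ disks to restore the cohomology, or via coinduction $\coind{c}{}$ of the objectwise algebras. The coinduction route uses that the right adjoint of \eqref{eq:coinduced} is lax monoidal, so $\prod_c\coind{c}{A(c)}$ is an algebra diagram whose augmentation ideal is injective in every degree. This suggests a cleaner alternative for the whole proof.

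\emph{Alternative.} Replace each $A(c)$ by itself and set
\[
\widehat A=\kk\times_{\prod_c\coind{c}{\kk}}\prod_c\coind{c}{A(c)}.
\]
The comparison with $A$ would be verified objectwise and degreewise using exactness of evaluation and semisimplicity of $kG_c$. We would pursue this coinduction construction if the disk-attachment approach stalls in degree $0$.
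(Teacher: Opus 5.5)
Your proposal does not yet contain a working construction, and each route you sketch breaks at a specific point.

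\textbf{Step~2.} As written, it attaches free disks, $A\to A\otimes\Sym(D)$. Such attachments can never repair a non-injective $A^n_+$. The augmentation of $\Sym(D)$ retracts the inclusion, so $A^n_+$ stays a direct summand of the new degree-$n$ augmentation ideal, and a direct summand of an injective is injective. The cells must therefore be attached along nontrivial maps out of non-injective sources. You leave exactly this choice unspecified (``chosen to kill the failure of injectivity''), and it is the whole content of the lemma.

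\textbf{Stabilization.} Your finite, degree-by-degree stabilization also fails. Each new generator multiplies against $A^0$ and against other new generators, and without the $\otimes$-condition these products need not be injective (compare \cref{ex:tensorfailure}). In degree $0$, new polynomial products appear at every stage, so that degree never stabilizes.

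\textbf{Step~3.} The square-zero repair is not multiplicative unless $(A^0_+)^2=0$.

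\textbf{The coinduction alternative.} This is not a replacement at all. At an object $x$, the unit $A\to\prod_c\coind{c}{A(c)}$ is the map $A(x)\to\prod_c\Hom_{kG_c}(k\C(x,c),A(c))$, which sees the algebras at all targets of arrows out of $x$. Already for the arrow $c\to a$, your $\widehat A(c)$ is $A(c)\times_kA(a)$. Its cohomology $H(A(c))\times_kH(A(a))$ differs from $H(A(c))$ whenever $H^+(A(a))\ne0$. Repairing this would need a full cosimplicial (Godement) resolution, a commutative totalization, and a separate injectivity argument.

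\textbf{The missing idea} is to choose the cells so that injectivity becomes a lifting property. This is what the paper does.
\begin{enumerate}
\item Let $D^nU=(U\xrightarrow{\id}U)$ sit in degrees $n,n+1$. Run the small object argument on the maps $\Sym(D^nL)\to\Sym(D^nR)$, with $R=k\C$, $L\subset R$ a left ideal, and $n\geq0$. This factors $A\to\kk$ as $A\to A^{\mathrm f}\to\kk$.
\item An augmented map $\Sym(D^nL)\to A^{\mathrm f}$ is the same as a system map $L\to(A^{\mathrm f}_+)^n$. Hence the right lifting property of $A^{\mathrm f}\to\kk$ is precisely Baer's criterion for every $(A^{\mathrm f}_+)^n$, degree $0$ included.
\item Since $L$ is finite-dimensional, the domains are small. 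No stabilization is needed, and the $\otimes$-condition never enters.
\item Each cell is an objectwise trivial cofibration. After evaluation, $L\subset R$ splits as vector spaces, so the cell adjoins a free contractible disk. For $n=0$ this is the polynomial de Rham complex, which is acyclic in characteristic zero, so there is no reason to avoid degree-zero generators.
\item All adjoined generators are nonnegative. The resulting bounded-below, degreewise injective ideal is K-injective by \cref{lem:kinjective}, so $A^{\mathrm f}$ is fibrant by \cref{prop:model}.
\end{enumerate}
Your Step~1 can be dropped: a linear injective resolution of $A_+$ carries no multiplication, which is the very difficulty.
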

\begin{proof}
Put $R=k\C$ and let $D^nU$ be the complex $U\xrightarrow{\id}U$ in degrees $n,n+1$. Apply the small object argument to the maps
\[
 \Sym(D^nL)\longrightarrow \Sym(D^nR),\qquad L\subset R\text{ a left ideal},\quad n\geq0,
\]
factoring $A\to\kk$. All objects adjoined are nonnegative. Each displayed map is an objectwise trivial cofibration: after evaluation, split $L\subset R$ as vector spaces, so the map adjoins a free contractible disk. The resulting map $A\to A^{\mathrm f}$ is consequently a quasi-isomorphism. The lifting property says that every map $L\to(A^{\mathrm f}_+)^n$ extends to $R$. Baer's criterion makes every $(A^{\mathrm f}_+)^n$ injective. The resulting nonnegative augmentation ideal is K-injective, so $A^{\mathrm f}$ is fibrant.
\end{proof}

For a fibrant $B$ satisfying \eqref{eq:reduced}, a based polynomial path object is
\[
 P B=\kk\oplus\bigl(B_+\otimes k[t,dt]\bigr),\qquad |t|=0,\quad d(t)=dt.
\]
The constant-path inclusion is a quasi-isomorphism, and endpoint evaluation $PB\to B\times_{\kk}B$ is a fibration. Its kernel is degreewise a direct sum of copies of the injective components of $B_+$. Maps out of cofibrant objects into fibrant objects, modulo this homotopy relation, compute the localized category.

The indecomposable complex is
\begin{equation}\label{eq:indecomposables}
 \Qind A=A_+/(A_+)^2.
\end{equation}
Its right adjoint sends a complex to its square-zero augmented algebra. That adjoint preserves fibrations and trivial fibrations, so $\Qind$ is left Quillen. In particular, a quasi-isomorphism between cofibrant algebras induces a quasi-isomorphism of indecomposable complexes. We denote its derived functor by $\LQ$.

\subsection{Polynomial forms and diagrams of spaces}
For a pointed simplicial set $X$, evaluation at the basepoint augments $\A_{\PL}(X)$. Polynomial forms on a simplex are
\[
 \A_{\PL}(\Delta^m)=
 k[t_0,\ldots,t_m,dt_0,\ldots,dt_m]/\left(\sum t_i-1,\sum dt_i\right).
\]
The generators satisfy $|t_i|=0$, $|dt_i|=1$, and $d(t_i)=dt_i$. A simplicial operator $\theta:[r]\to[m]$ induces $t_i\mapsto\sum_{\theta(j)=i}t_j$ and the corresponding map on differentials. For an arbitrary simplicial set, compatible polynomial forms on all its simplices define $\A_{\PL}(X)$.

The realization functor is
\[
 \mathcal K(A)_m=\Hom_{\DGCA_k^{\mathrm{unaug}}}(A,\A_{\PL}(\Delta^m)),
\]
where the superscript $\mathrm{unaug}$ specifies that the Hom is taken in unaugmented algebras. The augmentation of $A$ specifies a distinguished constant simplex, making $\mathcal K(A)$ pointed. Maps of augmented algebras induce pointed maps of realizations. The resulting contravariant adjunction is characterized by
\[
 \Hom_{\sSet_*}(X,\mathcal K(A))
 \cong\Hom_{\DGCA_k^{\mathrm{aug}}}(A,\A_{\PL}(X)).
\]
Thus a contravariant diagram $\X:\C^{\op}\to\sSet_*$ gives a covariant augmented DGCA diagram. We write $\underline\pi_n(\X)$ for its rational homotopy system, with value $\pi_n(\X(c))\otimes\Q$ at $c$. Here $\AlgCat$ denotes the full subcategory of $\DGCA_{\Q}^{\geq0}(\C)$ consisting of cohomologically simply connected diagrams with finite-dimensional cohomology in every degree at each object. The notation $\mathrm{sc,ft}$ on the space side has the corresponding objectwise meaning.

\Needspace{9\baselineskip}
\begin{lemma}[Diagrammatic Sullivan equivalence]\label{lem:diagrammatic-sullivan}
The polynomial-forms/realization adjunction induces an equivalence
\begin{equation}\label{eq:rational-equivalence}
\begin{tikzcd}[column sep=large]
 \Ho(\Spaces)\arrow[r,shift left=1.5,"\mathbb D\A_{\PL}"]
 &\Ho(\AlgCat)^{\op}\arrow[l,shift left=1.5,"\mathbb D\mathcal K"] .
\end{tikzcd}
\end{equation}
Both homotopy categories mean localization at the indicated objectwise weak equivalences. This assertion does not require the $\otimes$-condition.
\end{lemma}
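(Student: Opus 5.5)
We reduce the statement to the classical Sullivan--Bousfield--Gugenheim equivalence applied objectwise. The strategy is to derive the two functors, show that the derived unit and counit are weak equivalences, and check this at each object $c\in\C$ separately. Since weak equivalences on both sides are objectwise, a natural transformation of diagrams is an isomorphism in the localization as soon as each component at $c$ is an isomorphism in the classical homotopy category. The $\otimes$-condition never enters, because no minimal models are constructed.

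\emph{Deriving the functors.} $\A_{\PL}$ preserves all weak equivalences of simplicial sets between rational-type objects: it sends rational homology equivalences to quasi-isomorphisms, by the polynomial de Rham theorem $H^*(\A_{\PL}(X))\cong H^*(X;\Q)$. It therefore descends directly, and we define $\mathbb D\A_{\PL}$ as the induced functor. For the other direction we set
\[
 \mathbb D\mathcal K(A)=\mathcal K(A^{\mathrm c}),
\]
where $A^{\mathrm c}\qi A$ is an objectwise cofibrant replacement. Concretely, we take a functorial replacement in which each $A^{\mathrm c}(c)$ is a Sullivan algebra. We could use a projective-type model structure on $\Fun(\C,\DGCA_{\Q})$, whose cofibrant diagrams are objectwise cofibrant, or simply apply a functorial Sullivan-model construction objectwise, for instance the bar--cobar or the functorial small-object Sullivan resolution. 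Since $\mathcal K$ sends quasi-isomorphisms between objectwise Sullivan algebras to weak equivalences (Ken Brown's lemma, applied objectwise), the functor $\mathbb D\mathcal K$ is well defined on the localization.

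\emph{Unit and counit.} The adjunction $\Hom_{\sSet_*}(X,\mathcal K A)\cong\Hom(A,\A_{\PL}X)$ is natural, so it extends objectwise to diagrams, with natural unit $\X\to\mathcal K\A_{\PL}(\X)$ and counit $A\to\A_{\PL}\mathcal K(A)$. We compose the unit with $\mathcal K$ of a cofibrant replacement $\A_{\PL}(\X)^{\mathrm c}\to\A_{\PL}(\X)$, and use the counit $A^{\mathrm c}\to\A_{\PL}\mathcal K(A^{\mathrm c})$. At each object $c$:
\begin{itemize}
\item the first map is $\X(c)\to\mathcal K(\M_{\X(c)})$, which is a rationalization for simply connected $\X(c)$ of finite rational type;
\item the second is the classical counit, which is a quasi-isomorphism for a simply connected Sullivan algebra of finite type \cite{BG76,FHT01}.
\end{itemize}
The hypotheses $\mathrm{sc,ft}$ are exactly what these classical theorems require. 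Naturality in $\C$ comes for free, because both maps are components of natural transformations of functors applied objectwise. The functors also land in the stated subcategories: $H^*(\A_{\PL}\X(c))$ is finite type and simply connected, and $\mathcal K$ of such an algebra is a simply connected rational space of finite type. Hence the two derived functors are mutually inverse equivalences.

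\emph{Main obstacle.} The delicate step is making the cofibrant replacement strictly functorial in $\C$, since Sullivan minimal models are not functorial. The fix is to avoid minimality altogether and use a functorial (non-minimal) Sullivan resolution, for example from the small object argument in $\DGCA_{\Q}$ applied objectwise, or a projective model structure on $\Fun(\C,\DGCA_{\Q})$. In either case the replacement is a strict diagram that is objectwise Sullivan, and this is all the argument needs. A second point to check is pointedness: the augmentation fixes a basepoint compatibly at all objects, so the adjunction holds in the pointed, augmented setting as displayed.
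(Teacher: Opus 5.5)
Your proposal is correct and is essentially the paper's argument: in both, the derived unit and counit are, at each object, the classical Bousfield--Gugenheim derived unit and counit, hence a rationalization and a quasi-isomorphism under the $\mathrm{sc,ft}$ hypotheses. One small imprecision: a functorial replacement $A^{\mathrm c}(c)$ is generally not a Sullivan algebra of finite type, so the classical theorem should be applied to a minimal model $\M\qi A^{\mathrm c}(c)$ and then transferred using naturality of the counit and two-out-of-three.

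The only real difference is bookkeeping. The paper derives the functors through a diagram-level Quillen adjunction, using the projective structure on space diagrams and the opposite of the injective structure on algebra diagrams. You bypass diagram model structures, noting that $\A_{\PL}$ is already homotopical and that a functorial cofibrant replacement in $\DGCA_{\Q}$, applied objectwise, gives a strict, objectwise cofibrant diagram; this is slightly lighter.
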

\begin{proof}
We use the general principle that a derived adjunction whose unit and counit are equivalences on specified homotopy types remains an equivalence on diagrams of those types. We give the argument here to keep track of replacements and of the opposite category.

Let $\mathcal D=\DGCA_{\Q}^{\geq0}$ be the ordinary model category of augmented nonnegative DGCAs. Equip $\Fun(\C^{\op},\sSet_*)$ with the projective model structure and $\Fun(\C,\mathcal D)$ with the injective diagram model structure; these exist by \cite[Proposition~A.2.8.2]{Lurie09}. Taking the opposite of the latter gives the projective model structure on $\Fun(\C^{\op},\mathcal D^{\op})$. This injective diagram structure on algebras is used only in the present proof and is distinct from the transferred structure of \cref{prop:model}.

The ordinary Sullivan adjunction
$\A_{\PL}:\sSet_*\rightleftarrows\mathcal D^{\op}:\mathcal K$
induces a Quillen adjunction between these projective diagram categories. Indeed, fibrations and trivial fibrations in a projective diagram structure are objectwise, and the right adjoint $\mathcal K$ preserves them objectwise. A projectively cofibrant diagram is objectwise cofibrant, and a projectively fibrant diagram is objectwise fibrant. Consequently, diagram replacements used to form the derived unit and counit also compute the ordinary derived unit and counit after evaluation at any object.

More explicitly, for a projectively cofibrant space diagram $\X$, choose a fibrant replacement $\widehat{\A_{\PL}\X}$ of $\A_{\PL}\X$ in $\Fun(\C^{\op},\mathcal D^{\op})$. The derived unit is represented by the natural map
\[
 \X\longrightarrow\mathcal K\bigl(\widehat{\A_{\PL}\X}\bigr).
\]
At each object this is the ordinary derived Sullivan unit, hence is rationalization for simply connected spaces of finite rational type \cite{BG76,Sullivan77}. It is therefore an objectwise rational equivalence, and a weak equivalence when $\X$ is already rational. Conversely, let $A$ be fibrant in $\Fun(\C^{\op},\mathcal D^{\op})$, with cohomologically simply connected finite-type values, and choose a projectively cofibrant replacement $\widehat{\mathcal K A}\to\mathcal K A$. The derived counit, written in the algebra direction, is
\[
 A\longrightarrow\A_{\PL}\bigl(\widehat{\mathcal K A}\bigr).
\]
Its evaluations are ordinary derived counits and are quasi-isomorphisms by the same theorem. The ordinary correspondence also shows that the two derived functors preserve the stated simple connectivity and finite-type conditions. The diagram replacements preserve these conditions because their weak equivalences are objectwise.

Thus the derived unit and counit are natural isomorphisms on the indicated localized diagram categories. Restricting first to rational space diagrams proves the equivalence there; functorial objectwise rationalization identifies this category with the rational localization on the left of \eqref{eq:rational-equivalence}. Finally, the algebraic weak equivalences used here and in \cref{prop:model} are the same objectwise quasi-isomorphisms. Localizing the indicated full subcategory of nonnegative diagrams therefore gives the algebraic category in \eqref{eq:rational-equivalence}, independently of the auxiliary model structure used to compute it.
\end{proof}

For a cofibrant model $\M$ of $\A_{\PL}(\X)$, evaluation is cofibrant in ordinary DGCAs, and therefore
\begin{equation}\label{eq:homotopy}
 H^n(\Qind \M)\cong\underline\pi_n(\X)^{\vee}\qquad(n\geq2).
\end{equation}
Duals are taken objectwise. Finite type makes duality exact and involutive on the homotopy representations in this formula.

\subsection{Elementary extensions and Postnikov stages}
Let $\X$ be a simply connected rational diagram, and write $\X_n$ for its $n$th Postnikov section. Put $\underline V=\underline\pi_{n+1}(\X)^\vee$. The next stage is the homotopy fibre of the $k$-invariant:
\begin{equation}\label{eq:postnikov-square}
\begin{tikzcd}[column sep=large,row sep=large]
 \X_{n+1}\arrow[r]\arrow[d]&*\arrow[d]\\
 \X_n\arrow[r,"k^{n+2}"']&K(\underline V^\vee,n+2).
\end{tikzcd}
\end{equation}
This is a homotopy pullback in the rational localization of $\Fun(\C^{\op},\sSet_*)$. The notation $K(\underline W,m)$ means the diagram obtained by applying the Eilenberg--MacLane functor objectwise to a contravariant coefficient system $\underline W$. Rational coefficients are understood in $\underline\pi$.

\Needspace{13\baselineskip}
For one object, a cocycle representative $\widetilde k^{n+2}:\underline V\to Z^{n+2}\M(X_n)$ gives the familiar Hirsch extension. If $\underline V\{r\}$ denotes $\underline V$ placed in degree $r$ with zero differential, it is the pushout
\begin{equation}\label{eq:ordinary-hirsch}
\begin{tikzcd}[column sep=large,row sep=large]
 \Sym(\underline V\{n+2\})\arrow[r,"\widetilde k^{n+2}"]\arrow[d,hook]
 &\M(X_n)\arrow[d,hook]\\
 \Sym(\underline V\{n+1\}\xrightarrow{\id}\underline V\{n+2\})\arrow[r]
 &\M(X_{n+1}).
\end{tikzcd}
\end{equation}
The lower-left algebra is contractible. The new degree-$(n+1)$ generator $v$ satisfies $dv=\widetilde k^{n+2}(v)$. For a general category, $\underline V$ need not be injective, so this extension need not preserve fibrancy. The resolution construction replaces \eqref{eq:ordinary-hirsch} by a natural fibrant version.

For a coefficient system $\underline V$ and an integer $m$, denote by $\Inj^\bullet_m(\underline V)$ the cochain complex obtained from the minimal resolution \eqref{eq:resolution} by putting
\[
 \bigl(\Inj^\bullet_m(\underline V)\bigr)^{m+j}=\Inj^j(\underline V),\qquad d=\partial.
\]
Here $\partial:\Inj^j(\underline V)\to\Inj^{j+1}(\underline V)$ is the resolution differential in \eqref{eq:resolution}, namely the quotient onto the successive cokernel followed by its injective-envelope inclusion. We use the shift convention $K[r]^i=K^{i+r}$ with differential $(-1)^r d_K$. Regard $\underline V$ as a complex in degree zero. The envelope inclusion defines a quasi-isomorphism
\[
 \underline V[-m]\qi\Inj^\bullet_m(\underline V),
\]
since its target has cohomology $\underline V$ in degree $m$ and zero elsewhere. Thus these two complexes are isomorphic objects of $\D(\R)$.

Let $m\geq3$ and let $B$ be a fibrant DGCA diagram satisfying \eqref{eq:reduced}. A morphism
\begin{equation}\label{eq:attachment-class}
 \kappa\in\Hom_{\D(\Vect_k^{\C})}(\underline V[-m],B_+)
 =H^m\RHom_{\Vect_k^{\C}}(\underline V,B_+)
\end{equation}
determines, by the derived free-algebra/augmentation-ideal adjunction, a homotopy class of augmented DGCA maps
\[
 [\Sym(\Inj^\bullet_m(\underline V)),B]_{\Ho(\DGCA_k(\C))}
 \cong\Hom_{\D(\R)}(\underline V[-m],B_+).
\]
The free algebra is cofibrant, and $B_+$ is K-injective. For $k=\Q$ and finite-dimensional $\underline V$, this free algebra models $K(\underline V^\vee,m)$. If $B$ models $\X_n$ and $m=n+2$, the class $\kappa$ corresponding to $k^{n+2}$ in \eqref{eq:postnikov-square} is therefore precisely the algebraic $k$-invariant. Adjoining a null-homotopy of this map will give the homotopy pushout dual to that Postnikov square.

Since $B_+$ is K-injective, $\kappa$ can be represented by a map $\widetilde k^m:\underline V\to Z^mB_+$. We now extend it to the resolution. Injectivity of $B^m$ extends this map across $\underline V\hookrightarrow\Inj(\underline V)$. Its differential vanishes on $\underline V$, so factors through $\im\partial\subset\Inj^1(\underline V)$; injectivity of $B^{m+1}$ extends that factor. Repeating gives
\begin{equation}\label{eq:resolution-ladder}
\begin{tikzcd}[column sep=small,row sep=large]
 \underline V\arrow[r,hook]\arrow[dr,"\widetilde k^m"']
 &\Inj(\underline V)\arrow[r,"\partial"]\arrow[d,dashed,"a^0"]
 &\Inj^1(\underline V)\arrow[r,"\partial"]\arrow[d,dashed,"a^1"]
 &\Inj^2(\underline V)\arrow[r]\arrow[d,dashed,"a^2"]&\cdots\\
 &B^m\arrow[r,"d_B"']&B^{m+1}\arrow[r,"d_B"']&B^{m+2}\arrow[r]&\cdots .
\end{tikzcd}
\end{equation}
At every later step, $d_Ba^j$ vanishes on $\ker(\partial:\Inj^j(\underline V)\to\Inj^{j+1}(\underline V))$, since this kernel is the image of the preceding differential and $d_B^2=0$. Thus the induction is well defined and yields a cochain map
\[
 a:\Inj^\bullet_m(\underline V)\longrightarrow B_+,
 \qquad d_Ba^j=a^{j+1}\partial.
\]
In the Postnikov situation $m=n+2$, the maps $a^j$ are the extended cocycles $\widetilde k^{n+2+j}$ of the elementary-extension construction. Their higher components encode the compatibility across the diagram; they are not additional independent $k$-invariants.

For any complex $K$, define its contractible cone in the present convention by
\[
 \mathrm CK=K\oplus s^{-1}K,\qquad
 d(x)=d_Kx,\qquad d(s^{-1}x)=x-s^{-1}d_Kx.
\]
Here $|s^{-1}x|=|x|-1$. The homotopy $h(x)=s^{-1}x$, $h(s^{-1}x)=0$ satisfies $dh+hd=\id$, and $K\hookrightarrow\mathrm CK$ is a monomorphism of complexes.

\begin{definition}[Elementary extension]\label{def:elementary}
Given $n\geq2$ and $a:\Inj^\bullet_{n+1}(\underline V)\to B_+$, set $K=\Inj^\bullet_{n+1}(\underline V)$. The \emph{elementary extension of $B$ by $\underline V$ in degree $n$} is the pushout
\begin{equation}\label{eq:extension-pushout}
\begin{tikzcd}[column sep=large,row sep=large]
 \Sym K\arrow[r,"\Sym(a)"]\arrow[d,hook]
 &B\arrow[d,hook]\\
 \Sym(\mathrm CK)\arrow[r]
 &B\langle \underline V,n;a\rangle .
\end{tikzcd}
\end{equation}
The top map is the algebra map determined by $a$. Explicitly,
\begin{equation}\label{eq:extension}
 \begin{split}
 B\langle \underline V,n;a\rangle&=B\otimes\Sym(s^{-1}\Inj^\bullet_{n+1}(\underline V)),\\
 D|_B&=d_B,\qquad D(s^{-1}x)=a(x)-s^{-1}\partial x.
 \end{split}
\end{equation}
\end{definition}

Thus $\Inj^j(\underline V)$ supplies generators in degree $n+j$. On those generators the differential has the two parts prescribed by the Postnikov construction: the extended cocycle, with value in $B^{n+1+j}$, and minus the resolution differential, with value in the next group of generators. In particular,
\[
 D^2(s^{-1}x)=d_Ba(x)-a(\partial x)+s^{-1}\partial^2x=0.
\]
For \eqref{eq:postnikov-square}, substitute $n+1$ for the attachment degree $n$: the resolution begins in degree $n+2$, and the new algebra generators begin in degree $n+1$.

\begin{lemma}\label{lem:extension}
Assume the $\otimes$-condition, $n\geq2$, and that $B$ is fibrant and satisfies \eqref{eq:reduced}.
\begin{enumerate}
\item The inclusion $B\hookrightarrow B\langle \underline V,n;a\rangle$ is a cofibration in $\DGCA_k(\C)_{\mathrm{trinj}}$, and its target is fibrant.
\item The quotient of underlying complexes $B\langle \underline V,n;a\rangle/B$ has zero cohomology below $n$, has $H^n=\underline V$, and has $H^{n+1}=0$.
\item Cochain-homotopic maps $a$ give isomorphic extensions over $B$.
\end{enumerate}
\end{lemma}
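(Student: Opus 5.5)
For (i), I will use the pushout \eqref{eq:extension-pushout} directly. The left vertical map is $\Sym$ applied to the inclusion $K\hookrightarrow\mathrm CK$. This inclusion is a monomorphism of complexes, hence a cofibration in $\Ch(\R)_{\mathrm{inj}}$. Since $\Sym$ is left Quillen for the transferred structure of \cref{prop:model}, $\Sym K\to\Sym(\mathrm CK)$ is a cofibration. Cofibrations are stable under pushout, so $B\hookrightarrow B\langle\underline V,n;a\rangle$ is one.

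For fibrancy, note that $B\langle\underline V,n;a\rangle$ is bounded below. By \cref{prop:model} it therefore suffices to show that its augmentation ideal is degreewise injective. Using the explicit description \eqref{eq:extension}, each degree of the augmentation ideal splits, as a system, into three kinds of summand:
\begin{itemize}
\item components $B_+^i$, which are injective because $B$ is fibrant;
\item positive-degree components of $\Sym(s^{-1}K)$, where $s^{-1}K$ is degreewise injective with generators in degrees $\geq n\geq2$;
\item tensor products of the two.
\end{itemize}
\Cref{lem:tensor} and the $\otimes$-condition handle the second and third kinds. Degreewise finiteness of the sums follows from the degree bounds: generators sit in degree $\geq2$, and \eqref{eq:reduced} gives $B^1=0$. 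Even without finiteness, infinite direct sums of injectives are injective over the Noetherian ring $k\C$.

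For (ii), I will filter the quotient $Q=B\langle\underline V,n;a\rangle/B$ by word length in the new generators $s^{-1}K$. The quotient $Q$ is $B\otimes\Sym^{\geq1}(s^{-1}K)$, and $D$ does not decrease this length. In the associated graded the term $a(x)$ raises $B$-degree but preserves word length, so I will further isolate the lowest degrees directly. The words of length $\geq2$ in $s^{-1}K$, and the products $B_+\otimes s^{-1}K$, all live in degree $\geq n+2$, since generators have degree $\geq n\geq2$ and $B_+$ starts in degree $2$. Hence in degrees $\leq n+1$ the quotient $Q$ coincides with the linear part $s^{-1}K$ with differential $-s^{-1}\partial$. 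The correction $a(x)$ lands in $B$ and dies in the quotient. This linear part is a shift of the minimal resolution, so its cohomology is $\underline V$ in degree $n$ and zero in degree $n+1$. Cohomology in degree $n+1$ also involves the degree-$(n+2)$ terms, but only through their images under $D$ restricted from degree $n+1$, which stay linear. I need to check this edge carefully: $H^{n+1}$ only requires the cycles in degree $n+1$ and the boundaries from degree $n$, both of which are purely linear. That settles $H^{n+1}=0$. Below $n$, $Q$ vanishes.

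For (iii), let $a'-a=d_Bh+h\partial$, using a degree $-1$ homotopy $h:K\to B_+$. I will define an algebra map $\phi$ over $B$ by
\[
 \phi(s^{-1}x)=s^{-1}x+h(x),
\]
and check that it commutes with the differentials, possibly after adjusting a sign of $h$:
\[
 D'\phi(s^{-1}x)=a'(x)-s^{-1}\partial x+d_Bh(x),\qquad
 \phi D(s^{-1}x)=a(x)-s^{-1}\partial x-h(\partial x).
\]
These agree exactly when $a'-a=-(d_Bh+h\partial)$, so I will replace $h$ by $-h$. The map $\phi$ is an isomorphism, with inverse given by $s^{-1}x\mapsto s^{-1}x-h(x)$, since it is unipotent with respect to the word-length filtration.

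The main obstacle is (ii). The filtration argument must correctly account for the mixed terms $B_+\otimes\Sym(s^{-1}K)$ near degree $n+1$, and the degree bookkeeping there must work out.
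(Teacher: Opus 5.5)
Your proposal is correct and follows essentially the paper's proof. For (i) you apply $\Sym$ to the monomorphism $K\hookrightarrow\mathrm CK$, use stability of cofibrations under pushout, and get fibrancy from \cref{lem:tensor} with the $\otimes$-condition. For (ii) you observe that in degrees at most $n+1$ the quotient is just $s^{-1}K$ with differential $-s^{-1}\partial$, with no product component in the differential out of degree $n+1$. For (iii) you use the substitution $s^{-1}x\mapsto s^{-1}x\pm h(x)$.

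One harmless slip: your aside that $D$ does not decrease word length in the new generators is false, because the term $a(x)$ lowers it. Your final degree-counting argument for (ii) never uses that aside.
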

\begin{proof}
The left arrow of \eqref{eq:extension-pushout} is the image under the left Quillen functor $\Sym$ of a monomorphism of complexes, hence is a cofibration. The same is true of its pushout. The new generators are degreewise injective, so the augmentation ideal of the extension is degreewise injective by the $\otimes$-condition and \cref{lem:tensor}. This proves (i).

Modulo $B$, in degrees $n$ and $n+1$ only linear new generators occur: a product involving a new generator and a positive-degree element has degree at least $n+2$. The differential on these generators, including their differential out of degree $n+1$, is exactly $-\partial$. It has no component in the extra product terms in degree $n+2$. Hence the cohomology in question is that of $s^{-1}\Inj^\bullet_{n+1}(\underline V)$, proving (ii).

If $a-a'=d_Bh+h\partial$, the map from the extension for $a$ to that for $a'$ which fixes $B$ and sends $s^{-1}x$ to $s^{-1}x+h(x)$ commutes with the differentials. Its inverse subtracts $h(x)$. This proves (iii).
\end{proof}

The pushout \eqref{eq:extension-pushout} is a homotopy pushout. To compute it, factor its top arrow as
\[
 \Sym K\lhook\joinrel\longrightarrow\widetilde B
 \xrightarrow{\sim}B,
\]
where the first map is a cofibration and the second an acyclic fibration. This is a cofibrant replacement of $B$ in the category of algebras equipped with a map from $\Sym K$. Since both maps out of the cofibrant algebra $\Sym K$ are now cofibrations, the pushout with $\widetilde B$ computes the homotopy pushout. Its map to \eqref{eq:extension-pushout} is a quasi-isomorphism: filter by the number of new generators; on associated graded objects it is $\widetilde B\to B$ tensored with the symmetric powers of $s^{-1}K$. Induction gives a quasi-isomorphism on every finite filtration stage, and exactness of filtered colimits gives the assertion on the union.

The algebra $\Sym(\mathrm CK)$ is quasi-isomorphic to $\kk$. For $k=\Q$ and finite-dimensional $\underline V$, realization consequently turns \eqref{eq:extension-pushout} into the homotopy-fibre square of the map to $K(\underline V^\vee,n+1)$ represented by \eqref{eq:attachment-class}. This is the diagrammatic generalisation of the Hirsch construction \cite[Section~11]{Scull02}.

We will also use the explicit extension criterion for a model map. Given $\rho:B\to\A$, a map from the new algebra to $\A$ extending $\rho$ is determined by a degree-$(-1)$ map $b:K\to\A_+$ satisfying
\begin{equation}\label{eq:extension-to-target}
 d_{\A}b+b\partial=\rho a.
\end{equation}
It sends $s^{-1}x$ to $b(x)$. Equivalently, $\rho a$ has been supplied with a specified null-homotopy. The universal property is displayed by
\begin{equation}\label{eq:extension-target-diagram}
\begin{tikzcd}[column sep=large,row sep=large]
 \Sym K\arrow[r,"a"]\arrow[d,hook]
 &B\arrow[d,hook]\arrow[ddr,bend left=18,"\rho"]&\\
 \Sym(\mathrm CK)\arrow[r]\arrow[drr,bend right=18,"{(\rho a,b)}"']
 &B\langle \underline V,n;a\rangle\arrow[dr,dashed,"\rho'"]&\\
 &&\A .
\end{tikzcd}
\end{equation}
Both the differential and the model map are therefore controlled by a cochain map and its null-homotopy. This is the input to the inductive construction of geometrically minimal models in the next section, specifically \cref{thm:geometric-existence}.

\Needspace{14\baselineskip}
\section{Geometrically minimal models}\label{sec:geometric}
In this section the good category $\C$ is assumed to satisfy the $\otimes$-condition.
\begin{definition}\label{def:geometric}
A \emph{geometrically minimal diagram} is an increasing union
\[
 \M=\bigcup_{n\geq1}\M(n),\qquad \M(1)=\kk,\qquad
 \M(n)=\M(n-1)\langle \underline V_n,n;a_n\rangle\quad(n\geq2),
\]
where each attachment uses a minimal injective resolution of a finite-dimensional representation $\underline V_n$. The filtration is called the Postnikov filtration. We denote the full category of these diagrams by $\MinModels_k(\C)$. A geometrically minimal model of $A$ is a quasi-isomorphism $\M\to A$ with $A$ fibrant, or the corresponding zigzag if $A$ has not been replaced by a fibrant diagram.
\end{definition}

As a graded algebra, such a diagram is
\begin{equation}\label{eq:geometric-generators}
 \M\cong\Sym\left(\bigoplus_{n\geq2}\bigoplus_{j=0}^{\cdim_k(\C)}\Inj^j(\underline V_n)[-n-j]\right).
\end{equation}
The notation in this formula places a representation with zero differential in the indicated degree. The differential includes both resolution differentials and terms in earlier stages. All sums in a fixed cohomological degree are finite.

\begin{theorem}[Existence]\label{thm:geometric-existence}
Let $\C$ be good and satisfy the $\otimes$-condition, and let $A\in\DGCA_k^{\geq0}(\C)$ be cohomologically simply connected and of finite type. If $A$ is fibrant, there is a direct quasi-isomorphism $\rho:\M\to A$ with $\M$ geometrically minimal. In general there is a zigzag $\M\qi A^{\mathrm f}\xleftarrow{\sim}A$ with $A^{\mathrm f}$ fibrant, giving a model in the sense of \cref{def:geometric}. Geometrically minimal diagrams are cofibrant and fibrant, and each homogeneous component is finite-dimensional.
\end{theorem}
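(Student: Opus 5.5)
We construct $\M(n)$ and quasi-isomorphism-approximations $\rho_n:\M(n)\to A$ by induction on $n$, with the inductive hypothesis that $\rho_n$ induces an isomorphism on $H^{\le n}$ and an injection on $H^{n+1}$ (objectwise). Start with $\M(1)=\kk\to A$, which satisfies this because $H^0(A)=\kk$ and $H^1(A)=0$. For the inductive step, work with the mapping cone of $\rho_n$ on augmentation ideals, or equivalently the relative cohomology $H^*(A,\M(n))$. Under the hypothesis its first possibly nonzero group is in degree $n+1$, and it fits into
\[
 0\to \coker H^{n+1}(\rho_n)\to H^{n+1}(A,\M(n))\to \ker H^{n+2}(\rho_n)\to0 .
\]
Put $\underline V_{n+1}=H^{n+1}(A,\M(n))$ up to the appropriate shift. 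It is a finite-dimensional system because $A$ is of finite type and $\M(n)$ has finite-dimensional homogeneous components; the latter is carried along in the induction, using \cref{prop:envelope} and the finiteness of $\C$. The connecting map of the cone gives a class in $H^{n+2}\RHom(\underline V_{n+1},\M(n)_+)$. Because $\M(n)$ is fibrant by \cref{lem:extension}(i), this class is represented by a cocycle map, and the ladder \eqref{eq:resolution-ladder} extends it to a cochain map $a_{n+1}:\Inj^\bullet_{n+2}(\underline V_{n+1})\to\M(n)_+$. The null-homotopy of $\rho_na_{n+1}$ in $A$ comes from the cone representatives. It is built degree by degree using injectivity of the resolution terms against the fibrant $A$, in the same way as the ladder, and by \eqref{eq:extension-to-target} it gives $\rho_{n+1}:\M(n+1)\to A$ extending $\rho_n$.

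The main obstacle is checking that $\rho_{n+1}$ satisfies the inductive hypothesis in degree $n+1$, including injectivity on $H^{n+2}$. For this we compare the long exact sequences of the pairs $(\M(n+1),\M(n))$ and $(A,\M(n))$. By \cref{lem:extension}(ii), the relative complex $\M(n+1)/\M(n)$ has $H^{n+1}=\underline V_{n+1}$ and $H^{n+2}=0$, and by construction the induced map $H^{n+1}(\M(n+1),\M(n))\to H^{n+1}(A,\M(n))$ is the identity. A five-lemma chase in degrees $\le n+2$ then gives the isomorphism on $H^{n+1}$ and the injectivity on $H^{n+2}$. The chase only needs low-degree information, since every product term involving new generators has degree $\ge n+3$, exactly as in the proof of \cref{lem:extension}(ii). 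The delicate point is to choose the null-homotopy so that the induced relative map really is the identity rather than merely some map. We handle this by taking the cone representatives themselves as the data $(\widetilde k,b)$ in degree $n+1$ and extending by injectivity only in higher degrees, where the choices do not affect cohomology in degrees $\le n+2$.

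Passing to the union, $\rho=\bigcup\rho_n$ is a quasi-isomorphism, because each cohomological degree stabilizes at a finite stage: stage $n$ adds generators only in degrees $\ge n$. Cofibrancy follows because $\M$ is a transfinite composite of the cofibrations of \cref{lem:extension}(i). Fibrancy holds because the augmentation ideal is bounded below and degreewise injective, by \cref{lem:tensor} and \cref{prop:model}. The degreewise finite-dimensionality follows from \eqref{eq:geometric-generators}: each $\Inj^j(\underline V_n)$ is finite-dimensional by \cref{prop:envelope}, it vanishes for $j>\cdim_k(\C)$ by \cref{prop:dimension}, and only stages $n\le N$ contribute in degree $N$. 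For general $A$, we first apply \cref{lem:connective-replacement} to obtain the fibrant $A^{\mathrm f}$, which remains nonnegative, cohomologically simply connected and of finite type, and then run the construction above to get the zigzag.
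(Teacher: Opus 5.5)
Your proposal is correct and follows essentially the same route as the paper, which also forms the homotopy-fibre (cone) complex $T$ and sets $\underline V_n=H^{n+1}T$. The paper then represents the truncation map $\underline V_n[-n-1]\to T$ by a cochain map using K-injectivity, extends the pair $(a,b)$ along the minimal resolution by the ladder, and verifies the improvement through the short exact sequence $0\to T\to T'\to\M(n)/B\to0$, whose connecting map is the identity. Your two-step construction of $a$ and then $b$, and your five-lemma comparison of the pairs $(\M(n+1),\M(n))$ and $(A,\M(n))$, are only repackagings of that single ladder extension into $T$ and that long exact sequence.
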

\begin{proof}
First assume $A$ fibrant. Suppose that $\rho:B=\M(n-1)\to A$ induces an isomorphism below $n$ and a monomorphism in degree $n$. Its reduced homotopy-fibre complex is
\[
 T^j=B_+^j\oplus A_+^{j-1},\qquad
 d_T(b,u)=(d_Bb,\rho(b)-d_Au).
\]
It is a bounded-below complex of injectives, with $H^jT=0$ for $j\leq n$. Set $\underline V_n=H^{n+1}T$. The canonical truncation gives the diagram in the derived category
\[
\begin{tikzcd}[column sep=large]
 \underline V_n[-n-1]&\tau_{\leq n+1}T\arrow[l,"\simeq"']\arrow[r]&T.
\end{tikzcd}
\]
It defines a morphism $\underline V_n[-n-1]\to T$ inducing the identity on $H^{n+1}$. Since $T$ is K-injective, this morphism has a cochain representative. Extending it through the minimal resolution by the procedure of \eqref{eq:resolution-ladder} gives a cochain map $\Inj^\bullet_{n+1}(\underline V_n)\to T$, written $(a,b)$. Its equations are
\[
 d_Ba=a\partial,\qquad d_Ab+b\partial=\rho a.
\]
Equation \eqref{eq:extension-to-target} now applies. Attach \eqref{eq:extension} using $a$ and extend $\rho$ by $s^{-1}x\mapsto b(x)$, obtaining $\rho_n:\M(n)\to A$.

This improves the cohomology comparison by one degree. To verify this without choosing separate kernel and cokernel splittings, compare the fibre complexes before and after attachment. Writing $T'$ for the new fibre complex, there is a short exact sequence
\[
 0\longrightarrow T\longrightarrow T'\longrightarrow\M(n)/B\longrightarrow0.
\]
For a new generator represented by $x$, its differential in $T'$ is represented by $(a(x),b(x))$ after the resolution term is removed. Thus the connecting homomorphism is exactly the one prescribed by the map to $T$. By \cref{lem:extension}, its first cohomology is $\underline V_n$ in degree $n$, and the connecting map to $H^{n+1}T$ is precisely the chosen identity. The long exact sequence gives zero fibre cohomology through degree $n+1$. Hence $\M(n)\to A$ is an isomorphism through degree $n$ and a monomorphism in degree $n+1$.

Start with $\M(1)=\kk$ and $n=2$. Filtered colimits are exact, so $\bigcup \M(n)\to A$ is the required direct quasi-isomorphism. For arbitrary $A$, apply this construction to the nonnegative fibrant replacement $A\qi A^{\mathrm f}$ supplied by \cref{lem:connective-replacement}. Cofibrancy follows from the elementary-extension construction. In each degree the construction stabilizes; fibrancy follows from \cref{lem:tensor}. The representations $\underline V_n$ are finite-dimensional by the long exact sequence for $T$ and induction, so \eqref{eq:geometric-generators} proves degreewise finite-dimensionality.
\end{proof}

The fibre construction includes the missing cohomology and the unwanted relations in a single representation:
\[
 0\longrightarrow\coker H^n(\rho)\longrightarrow \underline V_n
 \longrightarrow\ker H^{n+1}(\rho)\longrightarrow0.
\]
The left term consists of cohomology classes not yet represented by the model; the right term consists of classes of the model that must become boundaries in $A$. At each object these are the familiar two steps of the Sullivan construction. The single system $\underline V_n$ keeps their compatibility over $\C$: this extension need not split as a coefficient system.

At the initial stage, $\underline V_2=H^2(A)$ and $a=0$, so
\[
 \M(2)=\Sym(s^{-1}\Inj^\bullet_3(H^2(A))).
\]
The map to $A$ is obtained by extending representatives of $H^2(A)$ along the resolution. More generally, each step fits into the commuting diagram
\[
\begin{tikzcd}[column sep=large]
 \M(n-1)\arrow[r,hook]\arrow[dr,"\rho_{n-1}"']
 &\M(n)\arrow[d,"\rho_n"]\\
 &A.
\end{tikzcd}
\]
The two cohomology conditions in the proof state exactly how much of $A$ has been modelled after that step.

\begin{lemma}\label{lem:postnikov}
For a geometrically minimal diagram $\M$, the complex $\Qind \M(n)$ has cohomology $\underline V_i$ in degrees $2\leq i\leq n$ and zero elsewhere. The inclusion $\Qind \M(n)\to\Qind \M$ identifies it, in the derived category, with the truncation in degrees at most $n$. Consequently, if $\M$ models $\X$, then $\M(n)$ models $\X_n$ and $\underline V_n\cong\underline\pi_n(\X)^{\vee}$.
\end{lemma}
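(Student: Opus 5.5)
We compute $\Qind\M(n)$ by induction on $n$, using the explicit form \eqref{eq:extension} of each elementary extension. Since $\Qind$ turns the pushout \eqref{eq:extension-pushout} into a pushout of complexes, and $\Qind\Sym K=K$, $\Qind\Sym(\mathrm CK)=\mathrm CK$, we obtain a short exact sequence of complexes
\[
 0\longrightarrow\Qind\M(n-1)\longrightarrow\Qind\M(n)\longrightarrow s^{-1}\Inj^\bullet_{n+1}(\underline V_n)\longrightarrow0 .
\]
Concretely, the indecomposables of $\M(n)$ are $\Qind\M(n-1)\oplus s^{-1}\Inj^\bullet_{n+1}(\underline V_n)$. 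The differential on the new summand is $-\partial$ plus the linear part $\bar a$ of $a$, the component of $a(x)$ in $\Qind\M(n-1)$. The quotient has cohomology $\underline V_n$ concentrated in degree $n$, by the quasi-isomorphism $\underline V_n[-n]\qi\Inj^\bullet_n(\underline V_n)$.

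The key step is to show that the connecting homomorphism vanishes, so that the long exact sequence splits into $H^i\Qind\M(n)\cong H^i\Qind\M(n-1)$ for $i\neq n$ and $H^n\Qind\M(n)\cong\underline V_n$. By induction $H^*\Qind\M(n-1)$ is concentrated in degrees $2,\dots,n-1$. The connecting map goes from $H^n$ of the quotient to $H^{n+1}\Qind\M(n-1)=0$, so it vanishes for degree reasons alone.

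This is where we expect the main subtlety. The linear part $\bar a$ need not vanish cochain-wise, and this is exactly the phenomenon of contractible pairs across stages that algebraic minimality later removes. It is harmless for cohomology only because of the degree gap. We will also check the base case: $\M(2)=\Sym(s^{-1}\Inj^\bullet_3(\underline V_2))$ with $a=0$, so $\Qind\M(2)=s^{-1}\Inj^\bullet_3(\underline V_2)$ has cohomology $\underline V_2$ in degree $2$.

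Having the cohomology, the truncation statement follows. $\Qind\M=\bigcup_n\Qind\M(n)$ is a filtered colimit, and filtered colimits are exact. Hence $H^i\Qind\M(n)\to H^i\Qind\M$ is an isomorphism for $i\le n$, and the source vanishes above $n$. So $\Qind\M(n)\to\Qind\M$ factors through $\tau_{\le n}\Qind\M$ by a quasi-isomorphism. For the last assertion, $\M$ is cofibrant by \cref{thm:geometric-existence}, so \eqref{eq:homotopy} gives $\underline V_n\cong H^n\Qind\M\cong\underline\pi_n(\X)^\vee$.

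To see that $\M(n)$ models $\X_n$, we argue via \cref{lem:diagrammatic-sullivan}. Evaluated at each object, $\M(n)$ is a cofibrant DGCA whose indecomposables have cohomology only in degrees $\le n$. The map $\M(n)\to\M$ induces isomorphisms on $H^i\Qind$ for $i\le n$. By \eqref{eq:homotopy} objectwise, the realization of $\M\leftarrow\M(n)$ is therefore a map $\X\to\mathcal K\M(n)$ that is an isomorphism on $\pi_i$ for $i\le n$ and whose target has no homotopy above $n$. This is precisely the Postnikov section, naturally in the diagram.
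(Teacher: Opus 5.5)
Your proposal is correct and follows the paper's proof. Both arguments use the short exact sequence $0\to\Qind\M(n-1)\to\Qind\M(n)\to s^{-1}\Inj^\bullet_{n+1}(\underline V_n)\to0$ (the paper writes the quotient as $\Inj^\bullet_n(\underline V_n)$ up to a sign in the differential), kill the connecting map by degree since it lands in $H^{n+1}\Qind\M(n-1)=0$, induct on $n$, and apply \eqref{eq:homotopy} to identify the realized stages with Postnikov sections; your filtered-colimit argument for the truncation statement and your realization argument for $\mathcal K\M(n)$ only spell out what the paper leaves implicit.
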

\begin{proof}
There is a short exact sequence
\[
 0\longrightarrow\Qind \M(n-1)\longrightarrow\Qind \M(n)
 \longrightarrow \Inj^\bullet_{n}(\underline V_n)\longrightarrow0,
\]
up to the harmless sign change in the resolution differential. Its connecting map out of $\underline V_n$ lands in $H^{n+1}(\Qind \M(n-1))=0$. Induction proves the cohomology assertion and the truncation statement. Formula \eqref{eq:homotopy} identifies the realized stages with the objectwise Postnikov truncations, naturally in $\C$.
\end{proof}

\begin{lemma}[Straightening]\label{lem:straighten}
Every map $f:\M\to \M'$ of geometrically minimal diagrams is homotopic to a map preserving their Postnikov filtrations.
\end{lemma}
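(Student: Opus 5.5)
We argue by induction on the Postnikov filtration of the source, adjusting $f$ one stage at a time by homotopies that fix the previously straightened part. Suppose $f$ has already been modified, within its homotopy class, so that $f(\M(n-1))\subset\M'(n-1)$. The obstruction to $f(\M(n))\subset\M'(n)$ comes from the new generators $s^{-1}x$ with $x\in\Inj^\bullet_{n+1}(\underline V_n)$. By \eqref{eq:extension-to-target}, the restriction of $f$ to $\M(n)$ amounts to a pair $(f|_{\M(n-1)},b)$ with $b:K\to\M'_+$ of degree $-1$ and $d b+b\partial=f a_n$. Here $K=\Inj^\bullet_{n+1}(\underline V_n)$, and $f a_n$ already lands in $\M'(n-1)_+$. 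We want to replace $b$ by some $b'$ with values in $\M'(n)_+$, solving the same equation, and realise the change by a homotopy.

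The key tool is the quotient complex $\M'_+/\M'(n)_+$. We would first show that it is a bounded-below complex of degreewise injectives with zero cohomology in degrees $\leq n+1$. Injectivity should follow from the $\otimes$-condition and \cref{lem:tensor}, since the quotient is spanned by products involving at least one generator from stage $>n$ and these form a direct summand of injectives. For the cohomology bound, the inclusion $\M'(n)\to\M'$ is an isomorphism through degree $n$ and a monomorphism in degree $n+1$, as in the proof of \cref{thm:geometric-existence}. This is because the later stages $\underline V_i$ with $i>n$ only contribute generators in degree $\geq n+1$, and their lowest-degree classes are cycles modulo decomposables, as in \cref{lem:postnikov}. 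The image $\bar b$ of $b$ in the quotient is then a cochain map $K[-1]$-shifted into an acyclic-through-degree-$(n+1)$ complex. Now $K$ is concentrated in degrees $\geq n+1$, so the shifted source lives in degrees $\geq n$. The obstruction class lies in $\Hom_{\D(\R)}(\underline V_n[-n],\M'_+/\M'(n)_+)=0$ because the target has no cohomology in degree $n$. Truncating appropriately and using K-injectivity (\cref{lem:kinjective}), we get a null-homotopy $\bar h$ of degree $-2$. We lift it to $h:K\to\M'_+$ using degreewise injectivity of $\M'(n)_+$, so that the quotient sequence splits degreewise. Then $b'=b-(dh-h\partial)$, with the appropriate signs, takes values in $\M'(n)_+$ and still satisfies the defining equation.

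It remains to realise the passage from $b$ to $b'$ as a homotopy of algebra maps. We use the path object $P\M'$: the data $(f|_{\M(n-1)}\otimes 1,\ b+ (\text{terms involving }h,t,dt))$ define, via \eqref{eq:extension-to-target} applied with target $P\M'$, a map $\M(n)\to P\M'$ interpolating between the two extensions. We then extend this homotopy over all of $\M$ by the homotopy extension property. This is available because $\M(n)\hookrightarrow\M$ is a cofibration and $\M'$ is fibrant (\cref{thm:geometric-existence}), so $P\M'\to\M'\times\M'$ is a fibration and we can lift. This yields a map homotopic to $f$ that is filtration-preserving through stage $n$. Since each degree stabilises, the successive homotopies are eventually constant in each degree, and their infinite concatenation converges degreewise to a filtration-preserving map homotopic to $f$.

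The main obstacle is the vanishing statement for the quotient complex in degrees $\leq n+1$ together with the degreewise injectivity needed to lift $\bar h$. I would check that degree $n+1$ of $\M'/\M'(n)$ consists only of the linear generators $\Inj^0(\underline V'_{n+1})$, with differential $-\partial$ modulo $\M'(n)$, so its cohomology in degree $n+1$ is $\underline V'_{n+1}$. The relevant target degree for the obstruction is however $n$, where the quotient is zero. So the class vanishes for degree reasons, and only the higher components of the null-homotopy require the injectivity argument.
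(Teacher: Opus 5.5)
Your local step matches the paper's. The obstruction lives in $\Hom_{\D(\R)}(\underline V_n[-n],\M'/\M'(n))$, which vanishes because the quotient is zero in degrees $\leq n$. You lift the null-homotopy through the degreewise splitting and realize the correction $b\mapsto b-\delta(th)$ as a polynomial homotopy in $P\M'$. One slip: your opening claim that the quotient is acyclic through degree $n+1$ is false, since $H^{n+1}=\underline V'_{n+1}$, as you later notice. Only vanishing in degrees $\leq n$ is used, so this does no harm.

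The gap is the global assembly. You replace $f$ by a new map $f_n$ at every stage, extend each stage homotopy over all of $\M$, and then invoke an ``infinite concatenation'' of the homotopies $f_{n-1}\simeq f_n$. Homotopies here are DGCA maps $\M\to P\M'$ with $P\M'=\kk\oplus(\M'_+\otimes k[t,dt])$, and polynomial paths cannot be concatenated. Transitivity of the homotopy relation (available since $\M$ is cofibrant and $\M'$ fibrant) handles finitely many steps. It does not produce a single map $\M\to P\M'$ from $f$ to the limit map $g$. Nor does knowing $g|_{\M(n)}\simeq f|_{\M(n)}$ for each $n$ separately produce a homotopy on the union. ``Eventually constant in each degree'' is the right intuition, but it needs a construction.

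The paper supplies that construction by never replacing $f$. It builds polynomial homotopies $H_n:\M(n)\to P\M'$ starting at $f|_{\M(n)}$, with $H_n|_{\M(n-1)}=H_{n-1}$. At stage $n$:
\begin{itemize}
\item it lifts $H_{n-1}$ along $\M(n-1)\hookrightarrow\M(n)$ against the trivial fibration $\operatorname{ev}_0:P\M'\to\M'$ to get $F$, and sets $\psi=\operatorname{ev}_1F$;
\item it runs exactly your obstruction argument on $q_n\psi|_L$;
\item it corrects $F$ itself on the new generators by $H_n(x)=F(x)-d_{P\M'}(th)(x)-th(\ell x)$.
\end{itemize}
The correction vanishes at $t=0$ and does not touch $\M(n-1)$. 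Hence the $H_n$ glue to one homotopy on $\M$, and neither extension over $\M$ nor concatenation is needed.

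If you prefer to keep your scheme, you must compose homotopies relatively. For example, lift against the restriction of $\M'$-valued polynomial forms from $\Delta^2$ to the horn $\Lambda^2_1$ (and check that this restriction is a trivial fibration). This yields homotopies $K_n:f\simeq f_n$ with $K_{n+1}|_{\M(n)}=K_n|_{\M(n)}$, whose union is the required homotopy.
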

\begin{proof}
This is the analogue of Scull's cocellular approximation \cite[Lemma~13.57]{Scull02}. We construct maps $g_n:\M(n)\to\M'(n)$ and polynomial homotopies
\[
 H_n:\M(n)\longrightarrow P\M',\qquad
 \operatorname{ev}_0H_n=f|_{\M(n)},\qquad
 \operatorname{ev}_1H_n=g_n,
\]
where the last map is followed by $\M'(n)\hookrightarrow\M'$, and require
$H_n|_{\M(n-1)}=H_{n-1}$. Start with the constant homotopy on $\M(1)=\kk$.

Suppose $H_{n-1}$ has been constructed. The evaluation
$\operatorname{ev}_0:P\M'\to\M'$ is a trivial fibration: its kernel is bounded below and degreewise injective, and the constant-path inclusion is a quasi-isomorphism. Since $\M(n-1)\hookrightarrow\M(n)$ is a cofibration, the square
\[
\begin{tikzcd}[column sep=large,row sep=large]
 \M(n-1)\arrow[r,"H_{n-1}"]\arrow[d,hook]
 &P\M'\arrow[d,"\operatorname{ev}_0"]\\
 \M(n)\arrow[r,"f|_{\M(n)}"']\arrow[ur,dashed,"F"]
 &\M'
\end{tikzcd}
\]
has a lift $F$. Put $\psi=\operatorname{ev}_1F$; its restriction to $\M(n-1)$ is $g_{n-1}$.

Write $L=s^{-1}\Inj^\bullet_{n+1}(\underline V_n)$ for the new generator complex, with differential $\ell=-s^{-1}\partial$, so that the algebra differential on $L$ is $\ell+\tau$, where $\tau(L)\subset\M(n-1)$. Put $R_n=\M'/\M'(n)$ and let $q_n:\M'\to R_n$ be the quotient of complexes. This quotient is degreewise injective: in positive degrees $\M'(n)\subset\M'$ splits as a system, since $\M'(n)$ is degreewise injective, and the degree-zero quotient is zero. Moreover, $R_n$ is zero in degrees at most $n$, because all remaining generators have degree at least $n+1$. Thus $R_n$ is K-injective by \cref{lem:kinjective}.

The map $q_n\psi|_L$ is a cochain map, since $\psi\tau$ lands in $\M'(n-1)$. Its homotopy class is zero:
\[
 H^0\Hom_{\R}^{\bullet}(L,R_n)
 \cong\Hom_{\D(\R)}(\underline V_n[-n],R_n)=0.
\]
The last equality follows from the standard truncations. Choose a natural degree-$(-1)$ map $\bar h:L\to R_n$ with
$q_n\psi|_L=d_{R_n}\bar h+\bar h\ell$. A graded splitting of $q_n$ lifts it to $h:L\to\M'_+$, so that
\[
 \psi|_L-(d_{\M'}h+h\ell)
\]
takes values in $\M'(n)$.

We now make this correction while retaining the entire previously chosen homotopy. Regard $h_t(x)=t\,h(x)$ as a degree-$(-1)$ map $L\to P\M'$, and define $H_n$ to equal $H_{n-1}$ on $\M(n-1)$ and, on the new generators, to be
\[
 H_n(x)=F(x)-d_{P\M'}h_t(x)-h_t(\ell x).
\]
The subtracted term is the coboundary of $h_t$ in the Hom complex, hence is a cochain map from $(L,\ell)$ to $P\M'$. It follows that
\[
 d_{P\M'}H_n(x)=H_n(\ell x)+H_{n-1}(\tau x),
\]
so the formula extends multiplicatively to a DGCA map. At $t=0$ the correction vanishes; at $t=1$ it is $d_{\M'}h+h\ell$. Consequently $\operatorname{ev}_0H_n=f|_{\M(n)}$ and $g_n=\operatorname{ev}_1H_n$ takes values in $\M'(n)$. The restriction of $H_n$ to $\M(n-1)$ is exactly $H_{n-1}$.

The compatible maps $H_n$ therefore define a single polynomial homotopy $H:\M\to P\M'$ on the union. Its endpoint $g$ satisfies $g(\M(n))\subset\M'(n)$ for every $n$, proving the lemma.
\end{proof}

\begin{theorem}[Geometric rigidity]\label{thm:geometric-rigidity}
A quasi-isomorphism between geometrically minimal diagrams is homotopic to an isomorphism. Two geometrically minimal models of the same diagram are isomorphic, with their model maps commuting up to homotopy.
\end{theorem}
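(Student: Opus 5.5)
By \cref{lem:straighten}, we may replace a quasi-isomorphism $f:\M\to\M'$ of geometrically minimal diagrams by a homotopic map $g$ that preserves the Postnikov filtrations, so $g(\M(n))\subset\M'(n)$ for every $n$. We then show by induction on $n$ that each restriction $g_n:\M(n)\to\M'(n)$ is an isomorphism. Since the filtrations are exhaustive, this makes $g$ an isomorphism, proving the first assertion. The second assertion follows formally. Given models $\rho:\M\to A$ and $\rho':\M'\to A$ with $A$ fibrant, both $\M$ and $\M'$ are cofibrant and fibrant (\cref{thm:geometric-existence}). Lifting against the trivial fibration obtained by factoring $\rho'$ gives a map $f:\M\to\M'$ with $\rho'f\simeq\rho$, and $f$ is a quasi-isomorphism by two-out-of-three. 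Replacing $f$ by the homotopic isomorphism preserves the homotopy commutativity. For zigzag models, first pass to a common fibrant replacement.

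For the inductive step, apply $\Qind$. Since $g$ is a quasi-isomorphism between cofibrant algebras, $\Qind g$ is a quasi-isomorphism. By \cref{lem:postnikov}, its restriction $\Qind\M(n)\to\Qind\M'(n)$ is the truncation in degrees $\leq n$, hence also a quasi-isomorphism. Passing to the quotients of the short exact sequences in the proof of \cref{lem:postnikov}, the induced map
\[
 \bar g_n:\Inj^\bullet_{n}(\underline V_n)\longrightarrow\Inj^\bullet_{n}(\underline V'_n)
\]
is a cochain map of complexes of new generators. It is a quasi-isomorphism by the five lemma, since both quotients have cohomology concentrated in degree $n$ and the comparison there is $H^n$ of the quasi-isomorphism of truncations. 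Both complexes are minimal bounded-below complexes of injectives, so \cref{lem:mincomplex}(ii) makes $\bar g_n$ an isomorphism of complexes.

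It remains to lift this to the algebras. Assume that $g_{n-1}$ is an isomorphism. As graded algebras, $\M(n)=\M(n-1)\otimes\Sym L$ and $\M'(n)=\M'(n-1)\otimes\Sym L'$. On generators, $g_n$ sends $L$ to $L'$ plus terms in $\M'(n-1)$ and decomposables, and the linear part modulo $\M'(n-1)$ is the isomorphism $\bar g_n$. We filter by word length in the new generators. The associated graded map is $g_{n-1}\otimes\Sym(\bar g_n)$, which is a degreewise isomorphism. Since everything is degreewise finite-dimensional and the filtration is finite in each degree, $g_n$ is bijective.

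The main obstacle is the claim that the linear part of $g_n$ on new generators modulo $\M'(n-1)$ equals $\bar g_n$ exactly. It could in principle involve products of lower generators with new ones. This cannot happen for degree reasons: such a product has degree at least $n+2$, while $\bar g_n$ is only relevant at the level of indecomposables. Working with indecomposables modulo $\M(n-1)$ and the word-length filtration handles this.
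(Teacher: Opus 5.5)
Your outline follows the paper's proof: straighten by \cref{lem:straighten}, apply $\Qind$, use \cref{lem:postnikov} and the five lemma to show that the maps $\bar g_n$ on the Postnikov quotients are quasi-isomorphisms of minimal injective complexes, conclude by \cref{lem:mincomplex}(ii), and obtain the second assertion by lifting between cofibrant--fibrant models. The step that does not work as written is the passage from $\bar g_n$ to $g_n$.

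You filter $\M(n)=\M(n-1)\otimes\Sym L$ by word length in the new generators, but $g_n$ need not preserve that filtration. So there is no associated graded map $g_{n-1}\otimes\Sym(\bar g_n)$ to appeal to.

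\begin{enumerate}
\item \emph{Decreasing version.} For $\ell\in L$, the element $g_n(\ell)$ can have a component in $\M'(n-1)$. This rules out the filtration $\M(n-1)\otimes\Sym^{\geq p}L$.
\item \emph{Increasing version.} $L$ occupies degrees $n,\dots,n+\cdim_k(\C)$. Once $\cdim_k(\C)\geq n$, products of two new generators lie in the same degrees as single ones. Nothing in your argument excludes components of $g_n(\ell)$ in $\Sym^{\geq2}L'$, and these would break the filtration $\M(n-1)\otimes\Sym^{\leq p}L$.
\end{enumerate}

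Your last paragraph does not resolve this, and its degree argument is wrong. Products of an old and a new generator can occur in $g_n(\ell)$ once $|\ell|\geq n+2$; they are harmless only because they are decomposable. Consequently, even when the increasing filtration happens to be preserved, the associated graded map is a triangular perturbation of $g_{n-1}\otimes\Sym(\bar g_n)$, not equal to it.

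The repair is the paper's last step.
\begin{enumerate}
\item \emph{Indecomposables.} Since $g_{n-1}$ and $\bar g_n$ are isomorphisms, the five lemma applied to $0\to\Qind\M(n-1)\to\Qind\M(n)\to\Inj^\bullet_n(\underline V_n)\to0$ shows that $\Qind g_n$ is an isomorphism. In fact no induction on stages is needed: $\Qind g$ is a filtered map whose associated graded pieces are the isomorphisms $\bar g_n$, and the Postnikov filtration is finite in each degree, so $\Qind g$ is an isomorphism.
\item \emph{Algebras.} Filter by powers of the augmentation ideal, which every augmented map preserves. The associated graded map is $\Sym(\Qind g)$. All generators have degree at least two, so $(\M_+)^p$ vanishes below degree $2p$ and this filtration is finite in each degree. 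Hence $g$ itself is an isomorphism.
\end{enumerate}
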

\begin{proof}
For orbit categories this is \cite[Theorem~3.8 and Corollary~3.9]{Scull02}; the same argument applies here. Straighten the quasi-isomorphism by \cref{lem:straighten}. Applying $\Qind$ gives a quasi-isomorphism, since the source and target are cofibrant. By \cref{lem:postnikov}, its restrictions to the stages and its maps on the successive quotient complexes are quasi-isomorphisms. Those quotients are minimal injective resolutions, so their maps are isomorphisms by \cref{lem:mincomplex}. The filtered map of indecomposables is therefore an isomorphism; the filtration is finite in each degree.

A map between free graded-commutative algebras on generators of degree at least two that is an isomorphism on indecomposables is an isomorphism: invert its linear part and then remove decomposable terms successively by degree. Thus the straightened map is an isomorphism. Finally, model-category lifting provides a quasi-isomorphism between any two cofibrant-fibrant models of the same object, compatible with their maps up to homotopy; apply the first assertion.
\end{proof}

\section{Algebraically minimal models}\label{sec:algebraic}
In classical Sullivan theory, minimal algebras can be described either by successive elementary extensions in increasing generator degrees or by a Sullivan filtration and a decomposable differential \cite[Chapter~12]{FHT01}. The first description provides a construction; the second makes minimality easier to recognize. Their equivalence allows one to construct a model geometrically and verify it algebraically.

For diagrams these approaches lead to \cref{def:geometric,def:algebraic}, and their equivalence can fail. We first prove existence and rigidity for the algebraic notion, then compare the two notions by \cref{thm:spectral-criterion,thm:equivalence}. We continue to assume the $\otimes$-condition. The cancellation lemma itself is valid without any injectivity or tensor hypothesis.
\subsection{Intrinsic minimality}
Suppose $\M=\Sym \underline V$ as a diagram of graded algebras. Its differential has a linear part $d_0:\underline V\to \underline V$, which is the differential on $\Qind \M$, and a decomposable part. The equation $d^2=0$ implies $d_0^2=0$.

\begin{definition}\label{def:algebraic}
An \emph{algebraically minimal diagram} is a cofibrant augmented DGCA diagram $\M$ such that
\begin{enumerate}
\item $\M\cong\Sym \underline V$ as graded algebras, with $\underline V=\bigoplus_{n\geq2}\underline V^n$ degreewise injective and finite-dimensional;
\item the complex $\Qind \M$ is minimal, that is,
\begin{equation}\label{eq:algminimal}
 d_0(\Soc\Qind \M)=0.
\end{equation}
\end{enumerate}
The full category of these diagrams is denoted by $\AlgModels_k(\C)$. Cofibrancy is with respect to \cref{prop:model}; it is an additional requirement on the DGCA diagram, not a consequence of freeness of its underlying graded algebra.
\end{definition}

For an explicit sufficient condition, take an exhaustive filtration starting at $\kk$ whose successive extensions have the form $B\otimes\Sym K$, with
\[
 d(x)=d_Kx+\tau(x),\qquad \tau(K)\subset B_+,
\]
where $K$ is a positive complex and $d_B\tau+\tau d_K=0$. Each inclusion is a pushout of a free cone inclusion, as in \cref{lem:extension}, so the union is cofibrant. Retracts of these diagrams are cofibrant as well. This criterion applies to the geometric construction in \cref{thm:geometric-existence} and to its algebraic retract in \cref{thm:algebraic-existence}.

Condition \eqref{eq:algminimal} is independent of a choice of generators. In a chosen presentation it is equivalent to
\begin{equation}\label{eq:decomposable}
 d\left(\bigcap_{f:c\to d\text{ noninvertible}}\ker \M(f)\right)
 \subset (\M_+(c))^2\qquad(c\in\C).
\end{equation}
Here $(\underline V^n)_c$ denotes the kernel system component from \eqref{eq:socle}, degree by degree. Indeed, the linear part of an element in the intersection lies in $\underline V_c$, and the differential preserves the decomposable ideal. Conversely, the natural inclusion $\underline V\to\Sym \underline V$ lifts every element of $\underline V_c$ to that intersection. At an object with no outgoing nonisomorphisms, \eqref{eq:decomposable} says $d\M(c)\subset\M_+(c)^2$, exactly the ordinary Sullivan minimality condition recalled in \cref{sec:intro}; see \cite[Chapter~12]{FHT01}.

For orbit categories, the kernel condition \eqref{eq:decomposable} occurs in Triantafillou's definition \cite[Definition~5.1]{Triantafillou82}, whose freeness requirement is objectwise. Our definition also requires natural free generators and cofibrancy. Scull's example \cite[Section~21]{Scull02}, reproduced over a square in \cref{subsec:square}, shows that a model satisfying the earlier algebraic condition need not be geometrically minimal. Consequently the elementary-extension argument does not establish algebraic existence as asserted in \cite[Theorem~5.3]{Triantafillou82}: geometric construction alone cannot ensure \eqref{eq:decomposable}. The cancellation argument below establishes existence with our stronger hypotheses. The discrepancy concerns the two constructions; it does not contradict the existence of algebraically minimal models.

\Needspace{18\baselineskip}
\subsection{Cancellation of contractible generators}
For ordinary Sullivan algebras, decomposition into a minimal factor and a contractible factor is \cite[Theorem~14.9]{FHT01}. The following diagram version cancels any naturally contractible summand of the linear complex; its proof makes the necessary nonlinear change of generators explicit.

\begin{lemma}\label{lem:cancellation}
Let $\M=(\Sym \underline V,d)$ be a diagram of augmented DGCAs with generators in degrees at least two. Suppose that its linear differential admits a decomposition of complexes
\[
 (\underline V,d_0)=(\underline W,d_{\underline W})\oplus(C,d_C),
\]
and that $C$ is contractible by a natural contracting homotopy. Then there is a degree-one derivation $D_{\underline W}$ on $\Sym \underline W$, with linear part $d_{\underline W}$, and an isomorphism of DGCA diagrams
\begin{equation}\label{eq:cancellation}
 (\Sym \underline V,d)\cong(\Sym \underline W,D_{\underline W})\otimes(\Sym C,d_C).
\end{equation}
Under the identification $\underline V=\underline W\oplus C$, the coordinate change $\Phi$ inducing this isomorphism satisfies
\[
 \Phi(v)-v\in\Sym^{\geq2}\underline V\quad(v\in\underline V),
 \qquad \Qind(\Phi)=\id_{\underline V}.
\]
\end{lemma}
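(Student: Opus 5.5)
We follow the classical proof of \cite[Theorem~14.9]{FHT01}, done with natural maps throughout. Write $h$ for the natural contracting homotopy of $C$, so $d_Ch+hd_C=\id_C$. Split $C=B\oplus dB$ naturally by $B=hd_C(C)$ and $d_C(C)=d_Ch(C)$. These are complementary natural subsystems: the maps $e_1=hd_C$ and $e_2=d_Ch$ are natural idempotents with $e_1+e_2=\id$ and $e_1e_2=0$. The differential $d_C$ restricts to an isomorphism $B\to d_C(C)$. Thus $\Sym C$ is the free algebra on a natural family of pairs $(b,d_Cb)$, and it is contractible in the sense that $\Sym C\to\kk$ is a quasi-isomorphism. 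No injectivity is needed anywhere, only naturality of every choice.

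The key step is a natural nonlinear change of generators making the subalgebra generated by $C$ a differential ideal summand. Let $U=d_C(C)\subset C$. For $u=d_Cb\in U$, the element $db\in\Sym\underline V$ has linear part $u$. We define a graded algebra automorphism $\Phi$ of $\Sym\underline V$ that is the identity on $\underline W$ and on $B$ and sends $u=d_Cb$ to $db$. This is natural because $b=h(u)$ depends naturally on $u$, and it is an automorphism because $\Qind(\Phi)=\id$. In the new generators $(\underline W,B,U')$, with $U'=\Phi(U)$, we have $d(b)=u'$ and $d(u')=0$ exactly. The algebra splits as graded algebras into $\Sym\underline W\otimes\Sym(B\oplus U')$, with the second factor a DG subalgebra isomorphic to $(\Sym C,d_C)$.

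The remaining problem, and the main obstacle, is that $d(\underline W)$ may involve $B$ and $U'$, so $\Sym\underline W$ is not yet closed under $d$. Let $I$ be the ideal generated by $B\oplus U'$. It is $d$-stable, since $d(B)\subset U'$ and $d(U')=0$. The quotient $\Sym\underline V/I\cong\Sym\underline W$ therefore inherits a derivation $D_{\underline W}$ whose linear part is $d_{\underline W}$; this defines the first factor. To realize it as a tensor factor we construct a natural DGCA map
\[
\sigma:(\Sym\underline W,D_{\underline W})\to(\Sym\underline V,d)
\]
splitting the quotient, inductively on degree and word length.

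The induction runs as follows. Suppose $\sigma$ has been built on generators of lower degree, and let $w\in\underline W$ be a generator. We look for $\sigma(w)=w+\epsilon$ with $\epsilon\in I$ and $d\sigma(w)=\sigma(D_{\underline W}w)$. The defect
\[
\delta=dw-\sigma(D_{\underline W}w)
\]
lies in $I$ and is a cycle. The ideal $I$ is isomorphic to $\Sym\underline W\otimes\Sym^{\geq1}(B\oplus U')$ with a filtered differential, and it is acyclic with a natural contraction. This contraction comes from the natural homotopy $K$ on $\Sym(B\oplus U')$ extending $u'\mapsto b$, made multiplicative by the characteristic-zero averaging formula $K/(\text{word length})$, and it is corrected by a perturbation-lemma series in the filtration by $\Sym\underline W$-degree. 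The series terminates because degrees are bounded below by two. Setting $\epsilon=-K\delta$ gives the required $\sigma(w)$.

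Finally, the map $\sigma\otimes\mathrm{incl}$ is an algebra map
\[
\Sym\underline W\otimes\Sym(B\oplus U')\to\Sym\underline V .
\]
It commutes with differentials and is the identity on indecomposables, hence it is an isomorphism. Composing it with $\Phi$ gives the coordinate change in the statement; each step changes generators only by decomposables. The delicate points are checking that the perturbed homotopy on $I$ stays natural and that it kills exactly the defect. This is why a natural contracting homotopy of $C$, rather than mere acyclicity, is assumed.
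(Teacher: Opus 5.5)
The gap is in Step~4, the construction of $\sigma$; everything before it is sound and natural. Specifically, these earlier steps work: the splitting $C=hd_C(C)\oplus d_Ch(C)$; the automorphism $\Phi$ with $\Phi(u)=d(hu)$, which makes $\Sym(B\oplus U')$ a DG subalgebra with $db=u'$; the $d$-stable ideal $I$ with quotient $(\Sym\underline W,D_{\underline W})$; and a natural contraction $K$ of $(I,d)$, obtained by perturbing the word-length-normalized derivation $u'\mapsto b$ along the word-length filtration in $B\oplus U'$.

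\textbf{Why the induction fails.} You build $\sigma$ by induction on generator degree. That is the classical argument, in which the retained factor is minimal and has no linear differential. Here $D_{\underline W}w=d_{\underline W}w+(\text{decomposables})$ with $d_{\underline W}w\in\underline W^{|w|+1}$, and $d_{\underline W}\neq0$ is the typical case. In the application $\underline W$ is a minimal complex of injectives, e.g.\ $dx=y+z$ in \eqref{eq:square-algebra}. Consequences:
\begin{enumerate}
\item The defect $\delta=dw-\sigma(D_{\underline W}w)$ needs $\sigma$ one degree above the inductive range, so it is not yet defined.
\item The claim $d\delta=0$ rests on $d\sigma=\sigma D_{\underline W}$ holding on $D_{\underline W}w$, whose linear term $d_{\underline W}w$ has not been treated.
\item If you fix $\sigma$ provisionally in degree $n+1$, the defect need not be a cycle.
\item Correcting only on the subsystem $d_{\underline W}(\underline W^n)$ would require extending a natural map to all of $\underline W^{n+1}$. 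That needs an injectivity hypothesis the lemma does not have.
\end{enumerate}

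\textbf{How to repair it.} You must handle the coupling of adjacent degrees. Start from the graded section $w\mapsto w$ on all of $\underline W$, and put $\Delta=d\sigma-\sigma D_{\underline W}$. This is a $\sigma$-derivation, takes values in $I$ on generators, and satisfies $d\Delta+\Delta D_{\underline W}=0$.

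Suppose $\Delta$ vanishes on $\underline W^{<n}$. The identity then gives $\Delta(d_{\underline W}\underline W^{n-1})=0$, and $d\Delta(w)=-\Delta(d_{\underline W}w)$ for $w\in\underline W^n$. Now replace $\sigma(w)$ by $\sigma(w)-K\Delta(w)$ for all $w\in\underline W^n\oplus\underline W^{n+1}$. This makes $\Delta$ vanish on $\underline W^{\le n}$ without disturbing lower degrees, and each degree is changed only twice. Equivalently, solve in $\Hom(\underline W,I)$ with differential $f\mapsto df-(-1)^{|f|}fd_{\underline W}$, which is contracted by postcomposition with $K$.

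\textbf{Comparison with the paper.} The paper avoids $\Phi$ and $\sigma$ entirely. It filters $d=d_0+\delta_2+\delta_3+\cdots$ by arity and treats the whole linear differential $d_0$, including $d_{\underline W}$, as the unperturbed term. It then removes the non-$\underline W$ part of each $\delta_r$ by conjugating with $\exp(\xi_r)$. This uses that $\Hom(\underline W,J_r)\oplus\Hom(C,\Sym^r\underline V)$ is contracted by $f\mapsto h_{J_r}f$ and $f\mapsto(-1)^{|f|}fh_C$. These contractions are pure post- or precomposition, so a nonzero $d_{\underline W}$ on the source never obstructs the induction.
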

\begin{proof}
For homogeneous operators $u,v$, use the graded commutator
\[
 [u,v]=uv-(-1)^{|u||v|}vu.
\]
A derivation has \emph{arity $r$} if it takes generators to words of length $r$. Such a derivation is uniquely determined by its restriction to $\underline V$. The arity-$r$ derivation complex is therefore
\[
 \Der_r^a=\Hom_{\R}^a(\underline V,\Sym^r\underline V),
 \qquad \partial_{\Der}(f)=[d_0,f]
 =d_0f-(-1)^a f d_0.
\]
Here $\Hom_{\R}^a$ consists of natural graded maps raising cohomological degree by $a$, and $d_0$ on the target is its derivation extension. Filter derivations by arity and write
\[
 d=d_0+\delta_2+\delta_3+\cdots,
 \qquad \delta_r\in\Hom_{\R}(\underline V,\Sym^r\underline V)^1.
\]
Let $h_C$ be a natural contraction of $C$, and extend it to a degree-$(-1)$ derivation $h$ of $\Sym\underline V$ by setting $h|_{\underline W}=0$. The commutator $[d_0,h]=d_0h+hd_0$ is a derivation, zero on $\underline W$ and the identity on $C$. It consequently acts as multiplication by $j$ on
\[
 \Sym^{r-j}\underline W\otimes\Sym^j C.
\]
The kernel $J_r=\ker(\Sym^r\underline V\to\Sym^r\underline W)$ is the sum of these subcomplexes for $j>0$. On its $j$th summand, $h/j$ gives a contraction $h_{J_r}$. Characteristic zero is crucial here.

With differential $[d_0,-]$, the arity-$r$ derivation complex splits as
\begin{equation}\label{eq:derivation-splitting}
 \Hom(\underline W,\Sym^r\underline W)\ \oplus\ \Hom(\underline W,J_r)\ \oplus\ \Hom(C,\Sym^r\underline V).
\end{equation}
All Hom spaces in this formula are complexes of natural graded maps. On the last two summands the degree-$(-1)$ contractions are, respectively,
\[
 f\longmapsto h_{J_r}f,\qquad
 f\longmapsto(-1)^{|f|}f h_C.
\]
Substituting into $\partial_{\Der}H+H\partial_{\Der}$ gives the identity. Thus the complement of the derivations involving only $\underline W$ is contractible, without any projectivity requirement on $\underline W$.

Assume inductively that the terms of arity less than $r$ involve only $\underline W$. In the arity-$r$ component of $d^2=0$, all expressions involving those lower terms also involve only $\underline W$. Hence the complementary component of $\delta_r$ is a $[d_0,-]$-cocycle. By \eqref{eq:derivation-splitting}, it equals $[d_0,\xi_r]$ for a degree-zero derivation $\xi_r$ of arity $r$. Replacing $d$ by $\exp(\xi_r)d\exp(-\xi_r)$ removes that component and changes no lower-arity term. Continuing gives a differential $D_{\underline W}+d_C$.

The argument takes place in ordinary graded algebras. A monomial of cohomological degree $m$ has at most $\lfloor m/2\rfloor$ factors. Every degree-zero derivation used above raises word length, so its exponential and the infinite sequence of coordinate changes are finite on each homogeneous component. The resulting conjugation is a well-defined natural algebra automorphism, with inverse obtained in the same way.
\end{proof}

\begin{theorem}[Existence and tensor decomposition]\label{thm:algebraic-existence}
Let $\M$ be a cofibrant diagram whose underlying graded algebra is $\Sym \underline V$, with $\underline V$ degreewise finite-dimensional, injective, and concentrated in degrees at least two. There are an algebraically minimal diagram $\M_{\mathrm{alg}}$ and a contractible complex $C$ of injectives such that
\begin{equation}\label{eq:main-decomposition}
 \M\cong \M_{\mathrm{alg}}\otimes\Sym C.
\end{equation}
Both the inclusion and projection associated with this tensor decomposition are quasi-isomorphisms. In particular, every diagram in \cref{thm:geometric-existence} has an algebraically minimal model.
\end{theorem}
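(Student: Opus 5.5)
The plan is to apply \cref{lem:mincomplex}(i) to the linear complex and then \cref{lem:cancellation} to cancel the contractible part. The complex $(\underline V,d_0)=\Qind\M$ is bounded below, since generators have degree at least two, and it is degreewise injective. By \cref{lem:mincomplex}(i) it decomposes as a complex into $\underline W\oplus C$. Here $\underline W$ is minimal, so $d_0(\Soc\underline W)=0$. The summand $C$ is a direct sum of disks $U\xrightarrow{\id}U$ with $U$ injective. On each such disk the map sending the upper copy of $U$ identically to the lower copy and vanishing elsewhere is a natural contracting homotopy. Their sum is the natural contraction $h_C$ required by \cref{lem:cancellation}. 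All pieces are direct summands of $\underline V$, so they stay degreewise injective and finite-dimensional.

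\cref{lem:cancellation} then gives a derivation $D_{\underline W}$ on $\Sym\underline W$ with linear part $d_{\underline W}$, and an isomorphism $\M\cong(\Sym\underline W,D_{\underline W})\otimes(\Sym C,d_C)$. Put $\M_{\mathrm{alg}}=(\Sym\underline W,D_{\underline W})$. Condition (i) of \cref{def:algebraic} holds by construction. Condition \eqref{eq:algminimal} holds because $\Qind\M_{\mathrm{alg}}=(\underline W,d_{\underline W})$ is minimal. Next, $\Sym C$ is quasi-isomorphic to $\kk$: in characteristic zero, the contraction of $C$ extends to each $\Sym^r C$ for $r\geq1$ via the derivation $h$ divided by $r$, exactly as in the proof of \cref{lem:cancellation}. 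Hence the inclusion $\M_{\mathrm{alg}}\to\M_{\mathrm{alg}}\otimes\Sym C$ and the projection killing $C$ are quasi-isomorphisms, by the objectwise Künneth formula. Composing with the isomorphism $\Phi$ gives the stated maps relating $\M$ and $\M_{\mathrm{alg}}$.

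The main obstacle is cofibrancy of $\M_{\mathrm{alg}}$, since \cref{def:algebraic} demands it separately. I would exhibit $\M_{\mathrm{alg}}$ as a retract of $\M$. The inclusion $\iota$ and projection $\pi$ above are DGCA maps with $\pi\iota=\id$, because the projection is the augmentation on $\Sym C$. Transported through $\Phi$, they display $\M_{\mathrm{alg}}$ as a retract of the cofibrant object $\M$. Retracts of cofibrant objects are cofibrant, as remarked after \cref{def:algebraic}. One subtlety needs checking: $\pi$ must be a map of augmented DGCA diagrams. This holds because $d_C$ is linear and $\Sym C$ is a tensor factor of the differential algebra.

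For the final assertion, a diagram from \cref{thm:geometric-existence} is geometrically minimal. It is therefore cofibrant, free on the degreewise injective, finite-dimensional generators of \eqref{eq:geometric-generators}, and these lie in degrees at least two. The theorem applies, and $\M_{\mathrm{alg}}$ followed by the quasi-isomorphisms $\M_{\mathrm{alg}}\to\M\to A$ is an algebraically minimal model.
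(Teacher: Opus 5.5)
Your proposal is correct and follows the paper's proof essentially step for step: you split $\Qind\M$ by \cref{lem:mincomplex}(i), cancel the disk summand by \cref{lem:cancellation}, obtain cofibrancy of $\M_{\mathrm{alg}}$ as a retract of $\M$, and contract the augmentation ideal of $\Sym C$ by the word-length counting homotopy. The only additional remark in the paper is that $\M_{\mathrm{alg}}$ is also fibrant by \cref{lem:tensor}, because its generators are injective summands of those of $\M$; the statement does not require this, but it is what later makes comparison maps between models exist.
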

\begin{proof}
Apply \cref{lem:mincomplex} to $\Qind \M$ to split it as a minimal complex $\underline W$ and a sum $C$ of injective disks. Then apply \cref{lem:cancellation}. The resulting factor $(\Sym \underline W,D_{\underline W})$ has minimal indecomposable complex by construction. It is a retract of $\M$ as a DGCA diagram, so it is cofibrant. Its generators are injective summands of the generators of $\M$, and it is fibrant by \cref{lem:tensor}. The augmentation of $\Sym C$ is a quasi-isomorphism: the same counting contraction used in the cancellation lemma contracts its augmentation ideal. This proves the assertions. For a general $A$, first apply \cref{thm:geometric-existence}.
\end{proof}

The summands in \eqref{eq:main-decomposition} are not obtained merely by projecting the original differential onto a chosen complement of $C$. The coordinate change in \cref{lem:cancellation} is what makes that complement an actual differential subalgebra.

\begin{theorem}[Algebraic rigidity]\label{thm:algebraic-rigidity}
Every quasi-isomorphism between algebraically minimal diagrams is an isomorphism. Algebraically minimal models are unique up to isomorphism, compatibly with their model maps up to homotopy.
\end{theorem}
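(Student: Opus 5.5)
Let $f:\M\to\M'$ be a quasi-isomorphism between algebraically minimal diagrams $\M=(\Sym\underline V,d)$ and $\M'=(\Sym\underline V',d')$. We reduce to the indecomposable complexes. Both diagrams are cofibrant by \cref{def:algebraic}, and $\Qind$ is left Quillen. Hence $\Qind(f):\Qind\M\to\Qind\M'$ is a quasi-isomorphism of complexes of systems. Both complexes are bounded below, since generators lie in degrees at least two, and are degreewise injective by condition (i). Condition (ii) says exactly that they are minimal in the sense of \cref{def:mincomplex}. \Cref{lem:mincomplex}(ii) therefore makes $\Qind(f)$ an isomorphism of complexes. No straightening is needed, in contrast with \cref{thm:geometric-rigidity}: minimality is imposed on the whole complex, not stage by stage.

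Next we pass from indecomposables to the algebras, using the argument already used in \cref{thm:geometric-rigidity}. Choose the presentations $\Sym\underline V$ and $\Sym\underline V'$ as graded algebra diagrams. The linear part $f_1:\underline V\to\underline V'$ of $f$ is a natural map of graded systems, and under the identifications $\Qind\M=\underline V$, $\Qind\M'=\underline V'$ it equals $\Qind(f)$. It is therefore a natural isomorphism. Let $g=\Sym(f_1^{-1})$, an isomorphism of graded algebra diagrams. Then $g\circ f$ is a graded algebra endomorphism of $\Sym\underline V$ of the form $v\mapsto v+(\text{decomposables})$. Because generators have degree at least two, the decomposable part of a generator of degree $m$ involves only generators of degree less than $m$. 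Induction on degree therefore inverts $g\circ f$ naturally, and each homogeneous component is finite. So $f$ is a bijective DGCA map, hence an isomorphism of DGCA diagrams.

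For uniqueness, suppose $\M\to A$ and $\M'\to A$ are two algebraically minimal models, after fibrant replacement of $A$ if needed. Each $\M$ is cofibrant, and fibrant by \cref{lem:tensor}, since its generators are injective. Lifting $\M\to A$ through the trivial fibration obtained by factoring $\M'\to A$ gives a map $\M\to\M'$ whose composite with $\M'\to A$ is homotopic to the model map of $\M$. Equivalently, use that maps between cofibrant-fibrant objects in the homotopy category are represented by honest maps. By two-out-of-three this map is a quasi-isomorphism, hence an isomorphism by the first part, and it is compatible with the model maps up to homotopy.

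The main obstacle is the identification of $\Qind(f)$ with the linear part. It must be natural, that is, a morphism in $\R$ rather than merely objectwise, so that \cref{lem:mincomplex}(ii) applies. This holds because $f$ is a map of diagrams and $\Qind$ is functorial. The care needed is in remembering that \cref{lem:mincomplex}(ii) relies on \cref{lem:kinjective}, which requires bounded-below complexes of injectives. This is supplied by the degreewise-injectivity requirement in condition (i).
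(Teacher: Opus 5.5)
Your proof is correct and follows the paper's route. $\Qind f$ is a quasi-isomorphism of bounded-below minimal complexes of injectives, so it is an isomorphism by \cref{lem:mincomplex}(ii); induction on degree then makes $f$ an isomorphism of free algebras; and uniqueness comes from cofibrancy and fibrancy.

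One small imprecision in your uniqueness paragraph: lifting through the trivial fibration of a factorization $\M'\hookrightarrow\widetilde{\M'}\twoheadrightarrow A$ gives a map into $\widetilde{\M'}$, not into $\M'$. You must still compose with a retraction of $\M'\hookrightarrow\widetilde{\M'}$, which exists because $\M'$ is fibrant. Your ``equivalently'' formulation, representing homotopy classes between cofibrant--fibrant objects by honest maps, is exactly what the paper uses and avoids this step.
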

\begin{proof}
For a quasi-isomorphism $f:\M\to N$, the map $\Qind f$ is a quasi-isomorphism between bounded-below minimal complexes of injectives. It is an isomorphism by \cref{lem:mincomplex}. Hence $f$ is an isomorphism of free graded algebras, by induction on degree as in \cref{thm:geometric-rigidity}, and consequently of DGCAs. Existence of a comparison quasi-isomorphism between two models follows from cofibrancy and fibrancy.
\end{proof}

Algebraic rigidity is stronger than \cref{thm:geometric-rigidity}: the given quasi-isomorphism is already an isomorphism, whereas a geometric quasi-isomorphism may first require a homotopy.

\begin{lemma}[Algebraic models as deformation retracts]\label{lem:algebraic-retract}
Let $A$ be a fibrant, cohomologically simply connected DGCA diagram of finite type. There are a geometric model $\rho:\M_{\mathrm{geom}}\qi A$ and an algebraically minimal subdiagram $\M_{\mathrm{alg}}\subset\M_{\mathrm{geom}}$ with a retraction $p$ such that
\[
 p i=\id_{\M_{\mathrm{alg}}},\qquad
 i p\simeq\id_{\M_{\mathrm{geom}}}\ \text{relative to }\M_{\mathrm{alg}}.
\]
In particular, $\rho i:\M_{\mathrm{alg}}\qi A$ is a direct algebraic model map. For arbitrary $A$ the same assertion holds after a fibrant replacement.
\end{lemma}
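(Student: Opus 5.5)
Take $\rho:\M_{\mathrm{geom}}\qi A$ from \cref{thm:geometric-existence}, and apply \cref{thm:algebraic-existence} to $\M_{\mathrm{geom}}$. This gives an isomorphism $\Phi:\M_{\mathrm{geom}}\cong\M_{\mathrm{alg}}\otimes\Sym C$, where $C$ is a contractible complex of injectives built from the disks split off by \cref{lem:mincomplex}(i). Let $i$ be the composite $\M_{\mathrm{alg}}\to\M_{\mathrm{alg}}\otimes\Sym C\xrightarrow{\Phi^{-1}}\M_{\mathrm{geom}}$, where the first map is $x\mapsto x\otimes1$. Let $p$ be $\Phi$ followed by $\id\otimes\varepsilon$, with $\varepsilon:\Sym C\to\kk$ the augmentation. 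Then $pi=\id$ holds on the nose, and by \cref{thm:algebraic-existence} both maps are quasi-isomorphisms, so $\rho i$ is a quasi-isomorphism. Identifying $\M_{\mathrm{alg}}$ with its image under $i$ makes it a subdiagram.

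The main work is the homotopy $ip\simeq\id$ relative to $\M_{\mathrm{alg}}$. Transporting along $\Phi$, it suffices to exhibit a polynomial homotopy $H:\M_{\mathrm{alg}}\otimes\Sym C\to P(\M_{\mathrm{alg}}\otimes\Sym C)$ between $\id$ and $\id\otimes(\eta\varepsilon)$ that is constant on $\M_{\mathrm{alg}}$. Since the factor $\M_{\mathrm{alg}}$ is untouched, I reduce to $\Sym C$. Write $C=\bigoplus(U\xrightarrow{\id}U)$ as a sum of disks, with natural contraction $h_C$. On generators define
\[
 H(x)=t\,x+dt\,h_C(x)\qquad(x\in C),
\]
and extend multiplicatively; the only thing to check is compatibility with the differentials. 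On a disk with generators $u$ and $du$, put $H(u)=tu$, which forces $H(du)=d(tu)=dt\,u+t\,du$. This is a well-defined DGCA map because $\Sym C$ is free on the graded system $C$ and $d_C$ is linear. Evaluating at $t=1$ gives the identity, and at $t=0$ it kills all of $C$, giving $\eta\varepsilon$. Everything is natural in $\C$, since the decomposition of $C$ into disks is a decomposition of systems.

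The step I expect to need care is that $P$ was defined only for fibrant $B$ satisfying \eqref{eq:reduced}, as $\kk\oplus B_+\otimes k[t,dt]$. One has to check that $H$ lands there, i.e.\ that it sends the augmentation ideal into $B_+\otimes k[t,dt]$; this holds because $C$ sits in positive degrees. Fibrancy of the target follows from \cref{lem:tensor}. For arbitrary $A$ I would first replace it by the fibrant replacement of \cref{lem:connective-replacement} and then argue as above.
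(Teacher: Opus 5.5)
Your proof is correct and matches the paper's argument essentially verbatim: you transport the inclusion and the augmentation projection through $\M_{\mathrm{geom}}\cong\M_{\mathrm{alg}}\otimes\Sym C$, and you use the disk homotopy $H(u)=tu$, $H(du)=dt\,u+t\,du$, which is constant on $\M_{\mathrm{alg}}$. Your extra check that $H$ lands in the based path object $\kk\oplus(B_+\otimes k[t,dt])$ of the fibrant diagram $\M_{\mathrm{geom}}$ is a detail the paper leaves implicit.
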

\begin{proof}
Apply \cref{thm:geometric-existence,thm:algebraic-existence} to identify
\[
 \M_{\mathrm{geom}}\cong\M_{\mathrm{alg}}\otimes\Sym C,
\]
where $C$ is a sum of injective disks. Transport the inclusion and augmentation projection through this isomorphism. On each disk write its generators as $u,v$ with $du=v$ and $dv=0$. A natural polynomial homotopy from $ip$ to the identity fixes $\M_{\mathrm{alg}}$ and is determined by
\[
 H(u)=tu,\qquad H(v)=d(tu)=dt\,u+tv.
\]
These formulas respect the differential and give $H|_{t=0}=ip$, $H|_{t=1}=\id$; the retraction remains constant throughout. They apply to the whole generator systems, so the homotopy is natural in $\C$. The inclusion $i$ is a quasi-isomorphism by \cref{thm:algebraic-existence}, proving the assertion about $\rho i$.
\end{proof}

\Needspace{10\baselineskip}
\begin{corollary}[Classification of rational homotopy types]\label{cor:classification}
Let $\MinModels_{\Q}(\C)$ and $\AlgModels_{\Q}(\C)$ denote the full categories of geometrically and algebraically minimal diagrams over $\Q$. There are equivalences
\begin{align*}
 (\MinModels_{\Q}(\C)/\simeq_{\mathrm h})^{\op}
 &\simeq\Ho(\Spaces),\\
 (\AlgModels_{\Q}(\C)/\simeq_{\mathrm h})^{\op}
 &\simeq\Ho(\Spaces).
\end{align*}
Here $\simeq_{\mathrm h}$ identifies homotopic maps. In each class, isomorphism classes of objects correspond bijectively to rational diagram homotopy types.
\end{corollary}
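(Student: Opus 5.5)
The plan is to compose the diagrammatic Sullivan equivalence of \cref{lem:diagrammatic-sullivan} with an identification of $\Ho(\AlgCat)$ with each model category modulo homotopy. On the algebraic side, every object of $\AlgCat$ is connected by a zigzag of objectwise quasi-isomorphisms to an object of $\MinModels_{\Q}(\C)$ by \cref{thm:geometric-existence}, and to an object of $\AlgModels_{\Q}(\C)$ by \cref{lem:algebraic-retract} or \cref{thm:algebraic-existence}. Both kinds of minimal diagram satisfy \eqref{eq:reduced}, have finite-dimensional components, and are therefore themselves objects of $\AlgCat$. Hence the localization functor $\MinModels_{\Q}(\C)\to\Ho(\AlgCat)$, and likewise for $\AlgModels_{\Q}(\C)$, is essentially surjective.

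Next I would prove full faithfulness after passing to homotopy classes. Geometric minimal diagrams are cofibrant and fibrant by \cref{thm:geometric-existence}. Algebraically minimal ones are cofibrant by definition and fibrant by \cref{lem:tensor}, since their generators are degreewise injective. For cofibrant--fibrant objects $\M,\M'$ in the transferred model structure of \cref{prop:model}, the standard model-category theorem gives
\[
 \Hom_{\Ho}(\M,\M')\cong\Hom(\M,\M')/\simeq_{\mathrm h},
\]
where the homotopy may be computed with the path object $P\M'$, since $\M'$ satisfies \eqref{eq:reduced}. The localization of the full subcategory $\AlgCat$ at objectwise quasi-isomorphisms agrees with the full subcategory of $\Ho(\DGCA_{\Q}(\C)_{\mathrm{trinj}})$ on those objects. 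This holds because weak equivalences are the same and nonnegative fibrant replacements exist by \cref{lem:connective-replacement}, so no zigzags leave nonnegative diagrams. Combining these facts yields equivalences $\MinModels_{\Q}(\C)/\simeq_{\mathrm h}\simeq\Ho(\AlgCat)$ and similarly for $\AlgModels_{\Q}(\C)$. Taking opposites and composing with \eqref{eq:rational-equivalence} gives both displayed equivalences.

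For the final sentence, note that an equivalence of categories induces a bijection on isomorphism classes. An isomorphism in the homotopy category between minimal diagrams is a homotopy class of quasi-isomorphisms. By \cref{thm:geometric-rigidity}, such a class contains an actual isomorphism of geometric diagrams. By \cref{thm:algebraic-rigidity}, any representative is already an isomorphism of algebraic diagrams. So isomorphism classes in each model category, not merely in the homotopy category, correspond bijectively to rational diagram homotopy types.

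The main obstacle I expect is the middle step, namely justifying that the localization of the full subcategory $\AlgCat$ computes the same hom-sets as the ambient model category. That requires replacements to stay inside the nonnegative, cohomologically simply connected, finite-type world, and the homotopy relation defined via $P\M'$ to agree with the model-categorical one. I would handle this by using \cref{lem:connective-replacement} for fibrant replacement together with the fact that minimal models are already bifibrant. Then every map in the localized category between minimal diagrams is represented by a genuine map, up to the path-object homotopy.
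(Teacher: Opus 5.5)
Your proposal is correct and takes the same route as the paper's proof. You use \cref{thm:geometric-existence,lem:algebraic-retract} for essential surjectivity, bifibrancy to identify localized hom-sets with homotopy classes, and the two rigidity theorems to pass from weak-equivalence classes to isomorphism classes, then compose with \eqref{eq:rational-equivalence}. Your extra check that localizing the full subcategory $\AlgCat$ agrees with the ambient homotopy category (via \cref{lem:connective-replacement}) is the point the paper settles at the end of the proof of \cref{lem:diagrammatic-sullivan}.
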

\begin{proof}
By \cref{thm:geometric-existence,lem:algebraic-retract}, every indicated algebraic homotopy type is represented by an object of each class. All these objects are cofibrant and fibrant, so maps in the localized category are precisely their homotopy classes of maps. The respective rigidity theorems identify weak-equivalence classes with isomorphism classes. Apply \eqref{eq:rational-equivalence}.
\end{proof}

\begin{lemma}[Homotopy automorphisms]\label{lem:homotopy-automorphisms}
Let $\C$ be good and satisfy the $\otimes$-condition, and let $\X$ be a pointed diagram of simply connected spaces of finite rational type. Let $\M$ be either a geometrically or an algebraically minimal model of $\X$. Write $\mathcal E_{\Q}(\X)$ for its automorphism group in the rational diagram homotopy category, and $\Aut(\M)$ for the group of natural augmented DGCA automorphisms. Then
\[
 \Aut_0(\M)=\{\phi\in\Aut(\M)\mid\phi\simeq\id_{\M}\}
\]
is a normal subgroup, where the homotopies are natural augmented DGCA homotopies, and
\begin{equation}\label{eq:homotopy-automorphisms}
 \mathcal E_{\Q}(\X)^{\op}\cong\Aut(\M)/\Aut_0(\M).
\end{equation}
The identification is relative to the chosen model equivalence. In particular, every rational homotopy automorphism is represented contravariantly by a strict automorphism of $\M$.
\end{lemma}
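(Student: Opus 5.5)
The plan is to identify $\mathcal E_{\Q}(\X)$ with the automorphism group of $\M$ in the localized algebraic category, and then use cofibrant–fibrant lifting together with rigidity to represent each such automorphism by a strict one.

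\textbf{Step 1: transfer to the algebraic side.} By \eqref{eq:rational-equivalence}, the chosen model equivalence identifies $\mathcal E_{\Q}(\X)^{\op}$ with $\Aut_{\Ho(\AlgCat)}(\M)$. The opposite appears because $\mathbb D\A_{\PL}$ is contravariant. The identification depends on the chosen model map, which is why the statement is relative to it. By \cref{thm:geometric-existence,thm:algebraic-existence}, $\M$ is cofibrant and fibrant in \cref{prop:model}. Hence morphisms $\M\to\M$ in the homotopy category are exactly the homotopy classes $[\M,\M]$ of strict natural augmented DGCA maps, with homotopy given by the path object $P\M$. Composition of classes is well defined, so $[\M,\M]$ is a monoid and $\Aut_{\Ho}(\M)$ is its group of units.

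\textbf{Step 2: define the homomorphism.} Send $\phi\in\Aut(\M)$ to its class $[\phi]$. This is multiplicative and lands in the units. Its kernel is exactly $\Aut_0(\M)$, which is therefore a normal subgroup. The only point to check is that the homotopy relation on maps between cofibrant–fibrant objects is an equivalence relation compatible with composition; this is standard model-category theory.

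\textbf{Step 3: surjectivity.} This is the main point. A unit $[g]$ is represented by a strict map $g:\M\to\M$ that is a quasi-isomorphism, since its class is invertible in the homotopy category. In the algebraic case, \cref{thm:algebraic-rigidity} shows that $g$ is already an isomorphism, so $[g]=[\phi]$ with $\phi=g$. In the geometric case, \cref{thm:geometric-rigidity} shows that $g$ is homotopic to an isomorphism $\phi$, so again $[g]=[\phi]$. It remains to check that the homotopy produced there is a natural augmented homotopy in the same sense as in the definition of $\Aut_0$. It is: \cref{lem:straighten} builds a polynomial homotopy $\M\to P\M$, and the subsequent argument shows that the straightened map is itself the isomorphism. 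The first isomorphism theorem then gives \eqref{eq:homotopy-automorphisms}.

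The expected obstacle is bookkeeping rather than substance: making sure that the inverse of $[\phi]$ is the class of $\phi^{-1}$, and that the contravariance is tracked consistently so the op lands on the correct side.
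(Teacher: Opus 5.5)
Your proposal is correct and is essentially the paper's argument: the paper also identifies $\mathcal E_{\Q}(\X)^{\op}$ with the invertible homotopy classes of self-maps of the cofibrant--fibrant $\M$ (via \cref{cor:classification}). It then uses \cref{thm:geometric-rigidity,thm:algebraic-rigidity} to show that $\Aut(\M)$ surjects onto this group with kernel $\Aut_0(\M)$. Your extra bookkeeping is accurate: the kernel gives normality, and the straightening homotopy is a natural polynomial one. The only small adjustment is that fibrancy of an arbitrary algebraically minimal $\M$ comes from \cref{def:algebraic} together with \cref{lem:tensor}, not from \cref{thm:algebraic-existence}.
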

\begin{proof}
By \cref{cor:classification}, cofibrancy and fibrancy identify $\mathcal E_{\Q}(\X)^{\op}$ with the invertible homotopy classes of endomorphisms of $\M$. Any representative of such a class is a quasi-isomorphism. By \cref{thm:geometric-rigidity,thm:algebraic-rigidity}, it is homotopic to an isomorphism in the geometric case and is already an isomorphism in the algebraic case. Thus the homomorphism from $\Aut(\M)$ onto this group is surjective, with kernel $\Aut_0(\M)$.
\end{proof}

The quotient in \eqref{eq:homotopy-automorphisms} uses homotopies of the whole diagram; separate homotopies at its objects need not assemble into one. \Cref{ex:tubular-automorphisms,ex:projective-automorphisms} compute this quotient and identify which boundary self-equivalences preserve the embedding data.

\section{Comparison of the two minimality conditions}\label{sec:comparison}
For a single space, the geometric and algebraic descriptions of Sullivan minimality coincide. Scull's equivariant example \cite[Section~21]{Scull02} shows that this need not persist for diagrams. Their agreement depends both on the indexing category and on the particular rational homotopy type. The exact obstruction is given by \cref{thm:spectral-criterion}; \cref{cor:gaps} gives sufficient conditions in terms of injective dimensions and homotopy degrees. Finally, \cref{thm:equivalence} characterizes the categories for which every type has both minimality properties.

Throughout this section $\C$ is good and satisfies the $\otimes$-condition. For statements involving spaces take $k=\Q$; the algebraic criteria work over any field of characteristic zero.
\begin{definition}\label{def:algebraic-type}
A rational diagram homotopy type $\X$ is \emph{algebraic} if its geometrically minimal model is algebraically minimal.
\end{definition}
This terminology concerns the compatibility of the two minimality conditions. Every homotopy type has an algebraically minimal model, whether or not it is algebraic in this sense. The definition is independent of the chosen geometric model by \cref{thm:geometric-rigidity}.

\subsection{An intrinsic obstruction}
Put
\[
 L_{\X}=\LQ \A_{\PL}(\X),\qquad \underline V_q=\underline\pi_q(\X)^{\vee}.
\]
For a geometric model $\M$, the complex $\Qind \M$ represents $L_{\X}$ and has cohomology $\underline V_q$. Its graded components are the sums of the $\Inj^p(\underline V_q)$ in total degree $p+q$.

\begin{theorem}[Hyper-Ext criterion]\label{thm:spectral-criterion}
For each simple $S\in\R$, there is a strongly convergent spectral sequence
\begin{equation}\label{eq:hyperext}
 E_2^{p,q}=\Ext^p_{\R}(S,\underline V_q)
 \Longrightarrow H^{p+q}\RHom_{\R}(S,L_{\X}),
 \qquad d_r:E_r^{p,q}\to E_r^{p+r,q-r+1}.
\end{equation}
The type $\X$ is algebraic if and only if \eqref{eq:hyperext} collapses at $E_2$ for every simple $S$.
\end{theorem}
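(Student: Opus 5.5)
The spectral sequence will come from the Postnikov filtration of the geometric model, and the collapse criterion from a socle computation. Let $\M$ be a geometrically minimal model of $\X$. By \cref{lem:postnikov} the complex $\Qind\M$ represents $L_{\X}$, and the stages give an increasing filtration by subcomplexes $F_q=\Qind\M(q)$. The quotient $F_q/F_{q-1}$ is the shifted minimal resolution $\Inj^\bullet_q(\underline V_q)$, up to sign. Every term is a bounded-below complex of injectives, hence K-injective by \cref{lem:kinjective}. Therefore
\[
 \RHom(S,L_{\X})=\Hom^\bullet_{\R}(S,\Qind\M).
\]
Because each $\Inj^j(\underline V_q)$ is injective, the sequences $0\to F_{q-1}\to F_q\to F_q/F_{q-1}\to0$ split degreewise, and $\Hom(S,-)$ preserves their exactness. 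This yields a filtered complex, and its spectral sequence has
\[
 E_1=\Hom^\bullet(S,\Inj^\bullet_q(\underline V_q)).
\]
I will index so that $p$ is the resolution degree and $q$ the Postnikov degree. Then $E_1^{p,q}=\Hom(S,\Inj^p(\underline V_q))$ and $E_2^{p,q}=\Ext^p(S,\underline V_q)$. In a fixed total degree only finitely many $(p,q)$ occur, since $0\le p\le\cdim_k(\C)$ by \cref{prop:dimension}. So the filtration is finite in each degree and the spectral sequence converges strongly. The differential $d_r$ lowers the Postnikov index by $r-1$, which matches the stated bidegree once the regrading is written out.

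Next I would show that the $d_1$ differential vanishes. Since $S$ is simple, $\Hom(S,I)=\Hom(S,\Soc I)$. The resolution differential vanishes on socles by minimality, so $d_1=0$ and $E_1=E_2$. The same observation applied to the whole complex shows that the total differential on $\Hom^\bullet(S,\Qind\M)$ is induced by the restriction of $d_0$ to the $S$-isotypic part of $\Soc\Qind\M$.

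The criterion then follows from a socle identity. By \cref{def:algebraic}, $\M$ is algebraically minimal if and only if $d_0(\Soc\Qind\M)=0$. This holds if and only if, for every simple $S$, the complex $\Hom^\bullet(S,\Qind\M)$ has zero differential, because the socle is the sum of images of maps from simples. For the forward direction, if the differential vanishes then $E_1=E_\infty$, so there is collapse at $E_2$.

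For the converse, suppose the spectral sequence collapses at $E_2$ for every $S$. Then $\dim H^n\Hom(S,\Qind\M)=\sum_{p+q=n}\dim E_2^{p,q}=\dim\Hom^n(S,\Qind\M)$, using $E_1=E_2$; these dimensions are finite by finite type. Since $H^n$ is a subquotient of the finite-dimensional space $\Hom^n$, equality of dimensions forces the differential on $\Hom^\bullet(S,\Qind\M)$ to be zero. Hence $d_0$ kills the $S$-part of the socle for every $S$, and $\M$ is algebraically minimal.

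I expect the main obstacle to be the bookkeeping: checking that the differential of the filtered complex is exactly $\Hom(S,d_0)$ on socles, and that the indices match \eqref{eq:hyperext}. A related subtlety is that $d_0$ sends socle elements into the socle of a lower Postnikov stage, and this should correspond precisely to the higher differentials $d_r$ with $r\ge2$. That correspondence needs care with signs, and with the fact that the $E_2$ identification is independent of the model by \cref{thm:geometric-rigidity}.
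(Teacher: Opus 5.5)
Your proposal is correct and follows essentially the paper's argument. You apply $\Hom(S,-)$ to the filtered complex $\Qind\M$, identify $E_2$ using minimality of the resolutions, get strong convergence from $0\leq p\leq\cdim_k(\C)$, note that the differential of $\Hom^\bullet(S,\Qind\M)$ is $d_0$ restricted to socles, and use the same dimension count for the converse. The only difference is cosmetic: the paper filters by injective-resolution degree $p$, which gives the stated bidegree of $d_r$ with standard page numbering, whereas your filtration by Postnikov stage needs the one-step page renumbering you describe (your $E_1$ is the associated graded); it produces the same spectral sequence.
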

\begin{proof}
Use the Postnikov filtration of $\Qind \M$, which represents its canonical truncation filtration by \cref{lem:postnikov}. Applying $\Hom(S,-)$ gives the usual hyper-Ext spectral sequence. More explicitly, filter by injective-resolution degree $p$. The internal resolution differential raises $p$ by one. A linear attachment from stage $q$ to an earlier stage $q'<q$ raises $p$ by $q+1-q'\geq2$. Thus this is a filtration by subcomplexes, with
\[
 E_2^{p,q}=\Hom_{\R}(S,\Inj^p(\underline V_q))=\Ext^p_{\R}(S,\underline V_q),
\]
since the resolutions are minimal. It is bounded by $0\leq p\leq \cdim_k(\C)$, so convergence is strong. Since $\Qind \M$ is a bounded-below complex of injectives, the abutment has the stated derived interpretation.

If $\M$ is algebraically minimal, the differential on $\Hom(S,\Qind \M)$ is zero, and the spectral sequence collapses. Conversely, collapse implies, in every total degree $m$,
\[
 \dim H^m\Hom(S,\Qind \M)=\dim\Hom(S,(\Qind \M)^m).
\]
Both dimensions are finite. The elementary formula
\[
 \dim H^mK=\dim K^m-\operatorname{rank}d^{m-1}-\operatorname{rank}d^m
\]
therefore forces the differential on $\Hom(S,\Qind \M)$ to vanish. Varying $S$ shows that $d_0$ vanishes on the entire socle of $\Qind \M$, which is algebraic minimality.
\end{proof}

\begin{lemma}[Permanence of algebraicity]\label{lem:algebraicity-permanence}
Algebraicity is invariant under rational equivalences of diagrams. It is preserved by homotopy retracts, finite products, and Postnikov sections. It is also preserved by products $\prod_i\X_i$ for which, for each $d$, only finitely many factors have nonzero rational homotopy in degrees at most $d$.
\end{lemma}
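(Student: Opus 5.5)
The plan is to reduce every assertion to the hyper-Ext criterion of \cref{thm:spectral-criterion}. Its input is intrinsic: the complex $L_{\X}=\LQ\A_{\PL}(\X)$ in $\D(\R)$ together with its canonical truncation filtration. Algebraicity is therefore the statement that, for every simple $S$, the spectral sequence of $\RHom_{\R}(S,-)$ applied to the truncation tower of $L_{\X}$ degenerates at $E_2$.

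\emph{Invariance.} A rational equivalence $\X\to\X'$ induces a quasi-isomorphism of polynomial-forms diagrams. Since $\LQ$ is a derived functor, it gives an isomorphism $L_{\X'}\cong L_{\X}$ in $\D(\R)$. This isomorphism is compatible with canonical truncations, so the spectral sequences \eqref{eq:hyperext} are isomorphic from $E_2$ on, and collapse holds for one exactly when it holds for the other. Alternatively, one can note that the geometric models are isomorphic by \cref{thm:geometric-rigidity}.

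\emph{Postnikov sections.} By \cref{lem:postnikov}, the geometric model of $\X_n$ is the stage $\M(n)$, and $\Qind\M(n)$ is a subcomplex of $\Qind\M$ that is spanned by a subset of the generator summands. The socle of $\Qind\M(n)$ is contained in the socle of $\Qind\M$, and the linear differential of $\M(n)$ is the restriction of that of $\M$. Hence \eqref{eq:algminimal} for $\M$ implies it for $\M(n)$. Equivalently, the spectral sequence for $\X_n$ is the truncation $q\le n$ of the one for $\X$, and this subquotient collapses whenever the whole spectral sequence does.

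\emph{Products.} For finitely many factors, $\A_{\PL}$ of a product is quasi-isomorphic to the objectwise tensor product of the factors' forms. A tensor product of geometric models is again built by elementary extensions: interleave the Postnikov stages, and note that the resolutions of $\underline V_q\oplus\underline V'_q$ are the direct sums of the individual resolutions. Its indecomposables are $\Qind\M\oplus\Qind\M'$, with the direct-sum socle and differential, so \eqref{eq:algminimal} holds summandwise. For the infinite products in the statement, the finiteness hypothesis means that in each degree $d$ only finitely many factors contribute to $\underline V_q$ with $q\le d$. The weak product model is therefore a well-defined degreewise finite colimit of finite tensor products. Because condition \eqref{eq:algminimal} is checked degree by degree, it again holds summandwise.

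\emph{Homotopy retracts.} If $\X$ is a homotopy retract of $\Y$, then $L_{\X}$ is a retract of $L_{\Y}$ in $\D(\R)$, compatibly with canonical truncations, since truncation is functorial. The spectral sequence of $\X$ is then a natural retract of that of $\Y$ from $E_2$ on. A retract of a zero differential $d_r$ is zero, and by induction on $r$ collapse for $\Y$ implies collapse for $\X$.

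\emph{Main obstacle.} The step I expect to be hardest is the functoriality of the spectral sequence \eqref{eq:hyperext} under morphisms of derived indecomposables, needed for the retract case. The proof of \cref{thm:spectral-criterion} builds the spectral sequence from the Postnikov filtration of a chosen model, so I must show that it coincides from $E_2$ on with the canonical hypercohomology spectral sequence of the truncation tower. I would do this using \cref{lem:postnikov}, which identifies the Postnikov filtration of $\Qind\M$ with canonical truncation. After straightening the maps with \cref{lem:straighten}, these maps preserve the filtrations, which yields the naturality required.
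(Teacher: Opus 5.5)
Your proposal is correct and follows essentially the paper's route: invariance and homotopy retracts via the hyper-Ext criterion applied to $L_{\X}$ and its truncation filtration, products via tensor products of models whose indecomposables split as direct sums, and Postnikov sections via $\Soc\Qind\M(n)\subset\Soc\Qind\M$. The naturality issue you flag is handled by straightening alone, with no comparison to the canonical truncation spectral sequence needed. A Postnikov-preserving map sends $\Inj^p(\underline V_q)$ only into summands $\Inj^{p'}(\underline V'_{q'})$ with $q'\le q$, hence $p'\ge p$, so it preserves the resolution-degree filtration; its effect on $E_2$ depends only on the induced map on $H^*(\Qind\M)$, so $ri\simeq\id$ induces the identity from $E_2$ on even though the homotopy itself need not be filtered.
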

\begin{proof}
The complex $L_{\X}$ and its canonical truncation filtration are rational homotopy invariants, so \cref{thm:spectral-criterion} proves the first assertion. A homotopy retraction of diagrams induces a retraction of these filtered derived objects and hence of their hyper-Ext spectral sequences. Every differential of the retract is a retract of the corresponding differential; collapse therefore passes to homotopy retracts.

A product is modelled by the tensor product of cofibrant models. Indecomposables turn this tensor product into a direct sum, giving
\[
 L_{\prod_i\X_i}\simeq\bigoplus_i L_{\X_i}.
\]
The spectral sequence \eqref{eq:hyperext} is consequently the direct sum of those for the factors. For the stated infinite products, the tensor products of geometric models stabilize degree by degree; only finitely many summands contribute to any total degree, so the same argument applies. Finally, the geometric model of a Postnikov section is a stage $\M(n)$. Its indecomposables form a subcomplex of $\Qind\M$, and the socle of that subcomplex is contained in the full socle. Thus \eqref{eq:algminimal} passes to $\M(n)$.
\end{proof}

There is also a useful sufficient condition at a single stage. Projecting an attachment to indecomposables gives a class
\begin{equation}\label{eq:linear-k}
 \lambda_n\in\Hom_{\D(\R)}(\underline V_n[-n-1],\Qind \M(n-1)).
\end{equation}
If this class vanishes, its linear representative can be removed by a linear change of the new generators into old ones. If this happens at every stage, the indecomposable complex is the direct sum of the minimal resolutions $\Inj^\bullet_{n}(\underline V_n)$, and the type is algebraic. For example, the Eilenberg--MacLane diagram of \cref{ex:arrow-em} has exactly one such summand. Products of Eilenberg--MacLane diagrams have one summand for each factor, with all attachment classes zero.

\Needspace{10\baselineskip}
\begin{corollary}[Dimension and gap criteria]\label{cor:gaps}
Each of the following conditions implies that $\X$ is algebraic:
\begin{enumerate}
\item $\operatorname{id}_{\R}\underline V_n\leq1$ for every $n$;
\item whenever $\underline V_m,\underline V_n\ne0$ and $m<n$, one has $\operatorname{id}_{\R}\underline V_m\leq n-m$.
\end{enumerate}
In particular, every type is algebraic if $\cdim_k(\C)\leq1$. More generally, gaps of at least $\cdim_k(\C)$ between nonzero homotopy degrees suffice.
\end{corollary}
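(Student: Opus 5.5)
The plan is to derive both criteria from the hyper-Ext criterion \cref{thm:spectral-criterion} by a degree count. We show that in the spectral sequence \eqref{eq:hyperext}, for every simple $S$, every higher differential $d_r$ with $r\geq2$ has zero source or zero target. The sequence then collapses at $E_2$, and $\X$ is algebraic.

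First record the vanishing pattern of $E_2$. Since $E_2^{p,q}=\Ext^p_{\R}(S,\underline V_q)$, it vanishes unless $\underline V_q\ne0$ and $0\leq p\leq\operatorname{id}_{\R}\underline V_q$: the minimal resolution $\Inj^\bullet(\underline V_q)$ has length equal to the injective dimension. A differential $d_r:E_r^{p,q}\to E_r^{p+r,q-r+1}$ with $r\geq2$ therefore matters only if all three of the following hold:
\begin{itemize}
\item $\underline V_q\ne0$;
\item $\underline V_{q-r+1}\ne0$;
\item $p+r\leq\operatorname{id}_{\R}\underline V_{q-r+1}$.
\end{itemize}
Put $n=q$ and $m=q-r+1$. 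Then $m<n$ and $n-m=r-1$. This is consistent with the filtration description in the proof of \cref{thm:spectral-criterion}: a linear attachment from stage $q$ to stage $q'$ raises $p$ by $q+1-q'$.

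Under condition (i), the target of $d_r$ has $p+r\geq2>1\geq\operatorname{id}_{\R}\underline V_m$, so the target is already zero at $E_2$, hence at $E_r$. Under condition (ii), the target would require $p+r\leq\operatorname{id}_{\R}\underline V_m\leq n-m=r-1$. This is impossible since $p\geq0$. In both cases every $d_r$ with $r\geq2$ vanishes for every simple $S$, and \cref{thm:spectral-criterion} gives algebraicity.

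For the final sentences, recall from \cref{prop:dimension} that $\operatorname{id}_{\R}\underline V\leq\cdim_k(\C)$ for every system. So $\cdim_k(\C)\leq1$ implies (i). If consecutive nonzero homotopy degrees satisfy $n-m\geq\cdim_k(\C)$, then the same holds for all pairs $m<n$, and this gives (ii).

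The only delicate point is matching the bigrading conventions of \eqref{eq:hyperext} with the Postnikov filtration, so that the target index really is $q-r+1$ with $r\geq2$ and $p$ counts the resolution degree. I would check this against the stated shift $q+1-q'$ of attachments, and otherwise rely directly on \cref{thm:spectral-criterion}.
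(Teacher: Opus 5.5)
Your proof is correct, but it takes a different route from the paper's.

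\textbf{Your route.} You go through \cref{thm:spectral-criterion} and kill every $d_r$ with $r\geq2$ by a bidegree count. The target $E_r^{p+r,q-r+1}$ is a subquotient of $\Ext^{p+r}_{\R}(S,\underline V_{q-r+1})$. That group vanishes once $p+r>\operatorname{id}_{\R}\underline V_{q-r+1}$, and condition (ii) gives $\operatorname{id}_{\R}\underline V_{q-r+1}\leq r-1$.

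\textbf{The paper's route.} The paper works at chain level on the geometric model and never uses the spectral sequence. The generators of the stage of $\underline V_m$ lie in degrees $m,\dots,m+\operatorname{id}_{\R}\underline V_m\leq n$. The attachment of the stage of $\underline V_n$ lands in degrees at least $n+1$. So the linear part of the attachment into earlier stages is zero. Hence $\Qind\M$ is literally the direct sum of the minimal resolutions $\Inj^\bullet(\underline V_n)$, and it is minimal by \cref{def:mincomplex}.

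\textbf{Comparison.} The underlying inequality is the same: your $d_r$ is exactly the trace of a linear attachment from stage $q$ to stage $q-r+1$.

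The paper's version is more elementary and gives a stronger structural conclusion. The linear attachment maps vanish entirely, not just on the socle. It also avoids the rank-counting converse inside \cref{thm:spectral-criterion}.

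Your version works only through the intrinsic $E_2$-page $\Ext^p_{\R}(S,\underline V_q)$. This makes clear that a finer, simple-by-simple criterion is available, because a $d_r$ also dies when its source $\Ext^p_{\R}(S,\underline V_q)$ vanishes. Condition (ii) only uses vanishing of targets.
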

\begin{proof}
Condition (i) implies (ii). Under (ii), all generators of the earlier stage associated with $\underline V_m$ have degrees at most $m+\operatorname{id}\underline V_m\leq n$. The attachment of degree-$n$ generators takes values in degrees at least $n+1$, so it has no linear component in an earlier stage. Therefore $\Qind \M$ is the direct sum of its minimal resolutions.
\end{proof}

Ordinary Sullivan minimal models are not strictly functorial: choosing models and representatives of maps objectwise does not ensure compatibility with every composition and relation in $\C$. The next criterion gives diagrams for which all this compatibility can be achieved by ordinary minimal Sullivan algebras. It is a sufficient condition for such strict representatives; the necessity below concerns the objectwise minimality of the geometric model itself.

\begin{proposition}[Objectwise minimality and rectification]\label{prop:objectwise}
A geometrically minimal model of $\X$ is an ordinary minimal Sullivan algebra at every object if and only if every $\underline V_n=\underline\pi_n(\X)^{\vee}$ is injective. Under this condition the geometric model is a strictly commutative diagram of ordinary minimal Sullivan models.
\end{proposition}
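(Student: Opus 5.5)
The plan is to read objectwise minimality off the generator description \eqref{eq:geometric-generators}. Ordinary Sullivan minimality at an object $c$ means that the differential of $\M(c)$ is decomposable. Equivalently, since $\M(c)$ is free on generators of degree at least two, it means that the linear differential $d_0$ of $\Qind\M$ vanishes at $c$. So the statement to prove is: $d_0=0$ objectwise on $\Qind\M$ if and only if every $\underline V_n$ is injective.

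\emph{Sufficiency.} Suppose every $\underline V_n$ is injective.
\begin{enumerate}
\item The minimal injective resolution is $\Inj(\underline V_n)=\underline V_n$, and all higher terms vanish. Hence the stage-$n$ generators are just $\underline V_n$ in degree $n$, and there is no resolution differential.
\item The linear part of $D(s^{-1}x)=a(x)$ is the projection of $a(x)\in\M(n-1)^{n+1}$ to indecomposables. Its target is $\Qind\M(n-1)^{n+1}$, which is spanned by generators $\underline V_i$ of degree $i\leq n-1$. That space is zero in degree $n+1$.
\item Therefore $d_0=0$ on all of $\Qind\M$, and $\M(c)=(\Sym\underline V(c),d)$ has decomposable differential at every object $c$.
\item Every structure map is an algebra map between ordinary minimal Sullivan algebras. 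So $\M$ is a strictly commutative diagram of ordinary minimal Sullivan models, mapping by the quasi-isomorphism $\rho$ to $A$. This proves the second assertion.
\end{enumerate}

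\emph{Necessity.} Suppose the model is objectwise minimal, so $d_0=0$ objectwise on $\Qind\M$.
\begin{enumerate}
\item Fix $n$. The stage-$n$ indecomposables form the quotient complex $\Inj^\bullet_n(\underline V_n)$, with differential $\pm\partial$; this is the short exact sequence from \cref{lem:postnikov}.
\item The differential on that quotient is induced by $d_0$, so it is zero objectwise. Hence $\partial\colon\Inj^0(\underline V_n)\to\Inj^1(\underline V_n)$ is the zero map.
\item By construction, $\partial$ is the quotient $\Inj(\underline V_n)\to\Inj(\underline V_n)/\underline V_n$ followed by an injective envelope inclusion. If it vanishes, then $\underline V_n=\Inj(\underline V_n)$, which is injective.
\end{enumerate}

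The main point needing care is the identification used in step (ii) of necessity: the linear differential of $\M$, restricted to the new generators and reduced modulo older generators, really is $-\partial$. This follows from \eqref{eq:extension}. The cocycle term $a(x)$ lies in the image of $\M(n-1)$, so it contributes nothing to the quotient complex. What remains is $-s^{-1}\partial x$.

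Two remarks. First, necessity needs only the vanishing of $d_0$ on the quotient, and the sufficiency computation needs only the degree count in step (ii). Second, the objectwise characterization of minimality uses only that each $\M(c)$ is free on $\underline V(c)$ in degrees at least two. This matches \cite[Chapter~12]{FHT01}, since any free Sullivan algebra with generators of degree at least two and decomposable differential is minimal.
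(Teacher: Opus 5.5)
Your proposal is correct and follows the paper's proof. Sufficiency uses the same degree count: once every resolution has length zero, the attachment cocycles have no linear part. The second assertion is read off from the structure maps and the natural model map, as in the paper. For necessity, the paper uses \cref{lem:postnikov} to get $\underline V_n=H^n(\Qind\M)=(\Qind\M)^n$, a sum of injectives; you instead show that the first resolution differential vanishes, so $\underline V_n=\Inj(\underline V_n)$. That is only a minor variant of the same idea.
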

\begin{proof}
If every $\underline V_n$ is injective, no resolution generators occur in positive resolution degree. The differential of a degree-$n$ generator lands in the algebra generated in degrees below $n$ and is decomposable. Conversely, objectwise minimality means that the complex $\Qind \M$ has zero differential. Hence $\underline V_n=H^n(\Qind \M)=(\Qind \M)^n$ is injective. Duality gives the projective formulation. The structure maps of $\M$ and its natural model map already provide the asserted strict representatives.
\end{proof}

For an Eilenberg--MacLane diagram $K(\underline V^\vee,n)$, put $I=\Inj^\bullet_n(\underline V)$, with its resolution differential $d_I$. Both minimal systems have the following value on each arrow $f:c\to a$:
\[
\begin{tikzcd}[column sep=large]
 (\Sym I(c),d_I)\arrow[r,"\Sym I(f)"]
 &(\Sym I(a),d_I).
\end{tikzcd}
\]
Thus every such diagram is algebraic, as are its products by \cref{lem:algebraicity-permanence}.

\subsection{The commutative square}\label{subsec:square}
Let $\C$ be the poset with arrows $4\to2\to1$ and $4\to3\to1$, with the two composites equal. Write $S_c$ for the one-dimensional representation supported at $c$, and $\Ic{c}=\coind{c}{k}$. The minimal injective resolution of $S_1$ is
\begin{equation}\label{eq:square-resolution}
 0\longrightarrow S_1\longrightarrow \Ic{1}
 \xrightarrow{(1,1)}\Ic{2}\oplus \Ic{3}
 \xrightarrow{(1,-1)}\Ic{4}\longrightarrow0.
\end{equation}
At $4$ these are the maps $k\to k^2\to k$ displayed in the formula; at $2,3$ the nonzero first maps are identities. Thus $\cdim_k(\C)=2$, and the $\otimes$-condition holds by \cref{ex:meet}.

For later computations, \cref{prop:envelope} gives the injective envelope of any system $F$ on this square explicitly:
\begin{equation}\label{eq:tubular-envelope}
 \begin{split}
 \Inj(F)&\cong\bigl(\Ic{1}\otimes F(1)\bigr)
 \oplus\bigl(\Ic{2}\otimes\ker F_{21}\bigr)
 \oplus\bigl(\Ic{3}\otimes\ker F_{31}\bigr)\\
 &\qquad\oplus\bigl(\Ic{4}\otimes(\ker F_{42}\cap\ker F_{43})\bigr).
 \end{split}
\end{equation}
Here $F_{ij}:F(i)\to F(j)$ is a structure map. Applying this formula to successive cokernels computes the minimal resolution, whose length is at most two.

Consider the two-term complex $\Ic{1}$ in degree $3$ and $\Ic{2}\oplus \Ic{3}$ in degree $4$, with the first differential in \eqref{eq:square-resolution}. Its free algebra $\M_{\mathrm{alg}}$ is the square
\begin{equation}\label{eq:square-algebra}
\begin{tikzcd}[column sep={3.2cm,between origins},row sep=large]
 &\begin{gathered}4:\ \Sym(x_3,y_4,z_4)\\dx=y+z\end{gathered}
 \arrow[dl,"z\mapsto0"']\arrow[dr,"y\mapsto0"]&\\
 \begin{gathered}2:\ \Sym(x_3,y_4)\\dx=y\end{gathered}
 \arrow[dr,"y\mapsto0"']&&
 \begin{gathered}3:\ \Sym(x_3,z_4)\\dx=z\end{gathered}
 \arrow[dl,"z\mapsto0"]\\
 &\begin{gathered}1:\ \Sym(x_3)\\d=0\end{gathered}&
\end{tikzcd}
\end{equation}
Unspecified differentials vanish; arrows preserve identically named generators and send the others to zero. The algebra is cofibrant because it is free on a complex. Its indecomposable complex is minimal: the socle generators occur at $1,2,3$ and their differentials vanish there. Hence $\M_{\mathrm{alg}}$ is algebraically minimal. Its homotopy duals are
\[
 H^3\Qind \M_{\mathrm{alg}}=S_1,\qquad H^4\Qind \M_{\mathrm{alg}}=S_4.
\]

The geometrically minimal model is the full DGCA diagram
\begin{equation}\label{eq:square-geometric}
\begin{tikzcd}[column sep={3.2cm,between origins},row sep=large]
 &\begin{gathered}4:\ \Sym(x_3,y_4,z_4,v_4,w_5)\\
 dx=y+z,\quad dy=dv=w,\quad dz=-w\end{gathered}
 \arrow[dl,"{z,v,w\mapsto0}"']\arrow[dr,"{y,v,w\mapsto0}"]&\\
 \begin{gathered}2:\ \Sym(x_3,y_4)\\dx=y\end{gathered}
 \arrow[dr,"y\mapsto0"']&&
 \begin{gathered}3:\ \Sym(x_3,z_4)\\dx=z\end{gathered}
 \arrow[dl,"z\mapsto0"]\\
 &\begin{gathered}1:\ \Sym(x_3)\\d=0\end{gathered}&
\end{tikzcd}
\end{equation}
Its first stage uses the complete resolution \eqref{eq:square-resolution} in degrees $3,4,5$; the next adjoins $v_4$ at $4$ with $dv=w$. Since $v$ belongs to the socle and has nonzero linear differential, this model is not algebraically minimal.

The cancellation is completely explicit. Put $Y=y-v$ and $Z=z+v$ at $4$; these substitutions are natural because $v$ is supported at $4$. Then
\[
 dY=dZ=0,\qquad dx=Y+Z,\qquad dv=w,
\]
and
\begin{equation}\label{eq:square-cancellation}
 \M_{\mathrm{geom}}\cong \M_{\mathrm{alg}}\otimes\Sym(v_4,w_5),\qquad dv=w.
\end{equation}
This is a homotopy type that has an algebraically minimal model but is not algebraic in the sense of \cref{def:algebraic-type}. The nonzero class
\[
 \Ext^2_{\R}(S_4,S_1)\cong k
\]
is its linear Postnikov attachment. In \eqref{eq:hyperext} for $S=S_4$, the differential from the degree-four homotopy term to the second injective-resolution term of $S_1$ is an isomorphism. It detects exactly the pair cancelled in \eqref{eq:square-cancellation}.

\section{Splitting H-space diagrams}\label{sec:splitting}
A rational H-space splits into Eilenberg--MacLane spaces, but an equivariant splitting can fail. Triantafillou proved splitting for cyclic groups of prime-power order and constructed nonsplit Hopf $C_p\times C_q$-spaces for distinct primes $p,q$ \cite{Triantafillou83}; see also her account in \cite[Chapter~III, Theorem~3.1]{May96}. We extend this phenomenon to all good indexing categories: universal splitting holds exactly when the category algebra is hereditary (\cref{thm:h-splitting}). Under the $\otimes$-condition, this is also equivalent to every rational diagram homotopy type being algebraic (\cref{thm:equivalence}).

In this section $k=\Q$. The splitting theorem itself does not require the $\otimes$-condition. Put $\B=\Fun(\C^{\op},\Vect_{\Q})$.

\begin{definition}\label{def:hobject}
An \emph{H-diagram} is an H-object in the pointed homotopy category of diagrams: it has a multiplication $\mu:\X\times \X\to \X$ for which the two inclusions of $\X$ are units up to homotopy in that category. Associativity and commutativity are not assumed.
\end{definition}
This is a compatibility condition on the diagram, not merely a choice of unrelated H-space structures at its objects. Its Postnikov truncations inherit multiplications, since truncation preserves products.

\subsection{Coefficients and power maps}
For $\underline W\in\B$, use reduced diagram cohomology
\[
 \widetilde H^m_{\C}(Y;\underline W)
 =[Y,K(\underline W,m)]_*
 =H^m\RHom_{\B}(\widetilde C_*(Y;\Q),\underline W).
\]
In the last expression homological chains are given their usual cohomological Hom grading. One can compute with chains of a projectively cofibrant replacement of $Y$; those chains are projective representations. Resolving $\underline W$ gives the natural universal coefficient spectral sequence
\begin{equation}\label{eq:uct}
 E_2^{p,q}=\Ext^p_{\B}(\widetilde H_q(Y;\Q),\underline W)
 \Longrightarrow\widetilde H^{p+q}_{\C}(Y;\underline W).
\end{equation}
Its filtration is finite because $\cdim_{\Q}(\C)<\infty$. If $\cdim_{\Q}(\C)\leq1$, it yields the natural short exact sequence
\begin{equation}\label{eq:uct-short}
\begin{split}
0\longrightarrow\Ext^1_{\B}(\widetilde H_{m-1}(Y),\underline W)
\longrightarrow\widetilde H^m_{\C}(Y;\underline W)\\
\longrightarrow\Hom_{\B}(\widetilde H_m(Y),\underline W)
\longrightarrow0.
\end{split}
\end{equation}

\Needspace{9\baselineskip}
\begin{lemma}[Power-map weights]\label{lem:weights}
Let $Y$ be a simply connected rational H-space of finite type, and let $T=[2]^*$ on $H^*(Y;\Q)$, where $[2]=\mu\circ\Delta$.
\begin{enumerate}
\item The minimal Sullivan differential of $Y$ is zero. Consequently, $H^*(Y)$ is free graded-commutative, with indecomposables dual to $\pi_*(Y)$.
\item For its augmentation ideal $J$, the map induced by $T$ on $J^r/J^{r+1}$ is multiplication by $2^r$.
\item If $\pi_i(Y)=0$ for $i\geq n$, then $T-2$ is invertible on $H^n(Y)$ and $H^{n+1}(Y)$. These inverses are natural for maps commuting with the power maps.
\end{enumerate}
\end{lemma}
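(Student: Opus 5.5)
Part (i) is the classical fact that a rational H-space has a free cohomology algebra; I would take it from \cite{FHT01}. A direct argument: the multiplication $\mu$ gives a coproduct on $H^*(Y)$ with $\mu^*(x)=x\otimes1+1\otimes x+(\text{terms in }J\otimes J)$. By the Milnor--Moore/Hopf--Borel theorem in characteristic zero, such a connected graded-commutative Hopf algebra (associativity is not needed for the Borel argument on primitives) is free graded-commutative on its primitives. A free $H^*(Y)$ means $Y$ is formal with minimal model $(\Sym V,0)$. Minimality forces the model differential to be zero, since otherwise the cohomology of $(\Sym V,d)$ would fail to be free on $V$ by a dimension count. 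Hence $V\cong H^+(Y)/J^2$, which is dual to $\pi_*(Y)\otimes\Q$ by the classical case of \eqref{eq:homotopy}.

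For (ii), I would compute $T=\Delta^*\mu^*$. On a primitive generator $x$ we have $\mu^*x=x\otimes1+1\otimes x+\sum a_i\otimes b_i$ with $a_i,b_i\in J$. Applying $\Delta^*$ (the cup product) gives $Tx=2x+\sum a_ib_i\equiv 2x \pmod{J^2}$. Since $T$ is a ring endomorphism, a product of $r$ elements of $J$ is sent to $2^r$ times the product modulo $J^{r+1}$. This holds for arbitrary indecomposable representatives, so no choice of primitives is required.

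For (iii), I would filter $H^n(Y)$ and $H^{n+1}(Y)$ by the finite filtration $J^r$; it is finite because generators have degree $\ge2$. By (ii), $T-2$ acts on $J^r/J^{r+1}$ as $2^r-2$, which is invertible for every $r\ge2$. The case $r=1$ is the only danger. In degree $n$ (resp.\ $n+1$), $J/J^2$ is dual to $\pi_n$ (resp.\ $\pi_{n+1}$), and both vanish by hypothesis. So on these two degrees $J=J^2$ there, and $T-2$ is invertible on every graded piece of a finite filtration, hence invertible. The inverse is a polynomial in $T$: by Cayley--Hamilton on the finite-dimensional space, $(T-2)^{-1}=p(T)$ with $p$ depending only on the eigenvalues $2^r$, $r\ge2$, occurring in the given degree. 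This makes naturality immediate, since any map $\phi$ with $\phi T=T\phi$ commutes with $p(T)$. One should choose a uniform $p$, for example the one annihilating $\prod_{r=2}^{\lfloor (n+1)/2\rfloor}(T-2^r)$, so that the same polynomial works for source and target.

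The main obstacle is (i) without assuming homotopy associativity. I would handle it via primitives: the dual $H_*(Y)$ is a connected cocommutative coalgebra with a unital (possibly nonassociative) product. The standard argument (Cartier/Borel in characteristic zero) shows that the primitives generate it freely in the appropriate sense. Alternatively, I would simply cite the classical result that rational H-spaces are products of Eilenberg--MacLane spaces. Parts (ii) and (iii) then need only the congruence $\mu^*x\equiv x\otimes1+1\otimes x$ modulo $J\otimes J$, which follows from the unit axioms alone.
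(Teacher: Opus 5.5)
Your proof is correct in substance, and parts (ii) and (iii) follow the paper's argument. The unit axioms give $T\equiv 2$ on $J/J^2$, multiplicativity gives $2^r$ on $J^r/J^{r+1}$, and the hypothesis puts degrees $n,n+1$ inside $J^2$.

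\emph{Comparison for (i).} Here the routes differ. The paper never passes through the Hopf algebra $H^*(Y)$ and works on the minimal model $(\Sym U,d)$ directly:
\begin{enumerate}
\item The linear part of any representative of $\mu^*$ is $u\mapsto u\otimes1+1\otimes u$.
\item At the lowest degree where $d\neq0$, the nonlinear terms of $\mu^*(u)$ are cocycles, so $du$ is primitive.
\item The lowest word-length component of $du$ is then primitive for the standard coproduct on $\Sym U$. In characteristic zero this forces it to be linear, contradicting minimality.
\end{enumerate}
You instead quote Hopf's structure theorem to get $H^*(Y)$ free. You then conclude from the quasi-isomorphism $(\Sym V,0)\to\A_{\PL}(Y)$ and uniqueness of minimal models. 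Your extra ``dimension count'' is unnecessary once you have that quasi-isomorphism. Your route is shorter if one accepts the structure theorem, whose proof is the same derivative trick applied in cohomology. The paper's route is self-contained and needs neither K\"unneth nor a Hopf-algebra theorem.

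\emph{Naturality in (iii).} Your uniform annihilating polynomial $\prod_{r=2}^{\lfloor (n+1)/2\rfloor}(T-2^r)$ is more careful than the paper's phrase ``a polynomial in $T$ in each degree''. It depends only on $n$, so the same inverse serves at every object of a diagram, which is what \cref{thm:h-splitting} needs.

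\emph{A false intermediate claim.} $H^*(Y)$ is free graded-commutative, but in general \emph{not} on its primitives. Milnor--Moore does not apply here: without homotopy commutativity the coproduct need not be cocommutative. For example, take $(\Sym(a_2,b_3,c_5),0)$ with $a,b$ primitive and $\mu^*(c)=c\otimes1+1\otimes c+a\otimes b$. This defines a rational H-space. Since $\bar\Delta(\alpha c+\beta ab)=(\alpha+\beta)\,a\otimes b+\beta\,b\otimes a$, there is no nonzero primitive in degree $5$.

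What you actually need is the Hopf--Borel--Leray form of the theorem. A connected graded-commutative algebra in characteristic zero with a counital algebra map $A\to A\otimes A$ is free on lifts of its indecomposables; neither coassociativity nor cocommutativity is assumed. For the same reason, drop the word ``primitive'' in (ii); as you note yourself, any indecomposable representative works. With these corrections the argument is complete.
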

\begin{proof}
Let $(\Sym U,d)$ be the ordinary minimal Sullivan algebra. A representative for multiplication is a map $\mu^*:\Sym U\to\Sym U\otimes\Sym U$. The unit identities imply that its linear part is $u\mapsto u\otimes1+1\otimes u$: homotopic maps of ordinary minimal algebras induce identical maps on indecomposables.

Suppose $n$ is the lowest generator degree in which $d$ is nonzero, and choose $u\in U^n$ with $du\ne0$. Every nonlinear term in $\mu^*(u)$ involves generators of degrees less than $n$, so has zero differential. Hence
\[
 \mu^*(du)=du\otimes1+1\otimes du.
\]
Take the lowest nonzero word-length term of $du$. It has length at least two and is primitive for the coproduct making $U$ primitive, since all nonlinear corrections to $\mu^*$ raise word length. A free graded-commutative algebra in characteristic zero has no such primitive: the component of the reduced coproduct with one generator in the first factor is its graded derivative, and multiplication of that component gives its word length times the element. This contradiction proves (i).

The unit identities imply $T=2$ on $J/J^2$. Since $T$ is an algebra map, it is $2^r$ on the $r$th associated graded piece. If $Y$ has no homotopy in degrees at least $n$, the components of degrees $n,n+1$ are in $J^2$. Their finite word-length filtrations have weights $2^r$ with $r\geq2$, none equal to $2$. This proves invertibility. It can equivalently be expressed as a polynomial in $T$ in each degree, which proves naturality.
\end{proof}

\begin{theorem}[Rational splitting]\label{thm:h-splitting}
Let $\C$ be a good category. The following conditions are equivalent:
\begin{enumerate}
\item $\gldim(\Q\C)\leq1$;
\item every pointed, objectwise simply connected H-diagram of finite rational type admits a rational equivalence
\begin{equation}\label{eq:h-splitting}
 \X_{\Q}\simeq\prod_{n\geq2}K(\underline\pi_n(\X),n);
\end{equation}
\item every simply connected diagram of simplicial rational vector spaces of finite type having exactly two nonzero homotopy representations splits in this way.
\end{enumerate}
The splitting in (ii) is an equivalence of underlying diagrams; it is not asserted to preserve the given multiplication or to be canonical.
\end{theorem}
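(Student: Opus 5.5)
I will prove the cycle (i)$\Rightarrow$(ii)$\Rightarrow$(iii)$\Rightarrow$(i). The middle implication is immediate: a diagram of simplicial rational vector spaces is a diagram of simplicial abelian groups. Objectwise addition is natural, so such a diagram is a strict H-object in the diagram category, and (ii) applies to it.

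For (i)$\Rightarrow$(ii) I will induct up the Postnikov tower of $\X_{\Q}$. The inductive claim is that $\X_n\simeq\prod_{i\leq n}K(\underline\pi_i,i)$ and that every $k$-invariant vanishes. The truncation $\X_n$ is again an H-diagram. Its $k$-invariant $k^{n+2}$ lies in $\widetilde H^{n+2}_{\C}(\X_n;\underline\pi_{n+1})$, and since $\cdim_{\Q}(\C)\leq1$ this group sits in the sequence \eqref{eq:uct-short} with $m=n+2$. The map $[2]=\mu\circ\Delta$ is a self-map of the diagram, so $T=[2]^*$ acts naturally on both outer terms. At each object, \cref{lem:weights}(iii) applied to $Y=\X_n(c)$ shows that $T-2$ is invertible on $H^{n+1}$ and $H^{n+2}$. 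Because this inverse is a polynomial in $T$ in each degree, the objectwise inverses assemble into inverses on the homology representations $\widetilde H_{n+1}(\X_n)$ and $\widetilde H_{n+2}(\X_n)$ in $\B$. By functoriality of $\Ext$ and $\Hom$, $T-2$ is then invertible on both outer terms of \eqref{eq:uct-short}, and hence on the middle term by the five lemma.

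On the other hand, the $k$-invariant of the fibration $\X_{n+1}\to\X_n$ of H-diagrams should be primitive. Since $\X_{n+1}$ is an H-diagram, the pullback $\mu^*k$ equals $\pi_1^*k+\pi_2^*k$, up to cross terms that die under the unit conditions. Pulling this back along $\Delta$ gives $Tk=2k$, so $(T-2)k=0$ and therefore $k=0$. Thus $\X_{n+1}\simeq\X_n\times K(\underline\pi_{n+1},n+1)$, and passing to the limit gives \eqref{eq:h-splitting}. Finite type lets the products stabilize degreewise.

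\textbf{Main obstacle.} The hard step is justifying $\mu^*k$ primitive. The plan is the classical one, made diagrammatic: the H-structure on $\X_n$ lifts to $\X_{n+1}$ exactly when $\mu^*k-\pi_1^*k-\pi_2^*k$ vanishes. Since $\X_{n+1}$ truncates to $\X_n$ with a compatible multiplication, the truncated multiplication on $\X_n$ is one that lifts. If this obstruction argument gets messy, an alternative is to avoid primitivity. One would show directly that, on the image of $k$, $T$ agrees with the map induced by $\underline\pi_{n+1}(\mu\Delta)=2$, using naturality of $k$-invariants under the self-map $[2]$ of $\X_{n+1}$. That gives $Tk=2k$ directly, since $[2]$ acts by $2$ on $\pi_{n+1}$ and $k$ is natural in the space and coefficients.

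For (iii)$\Rightarrow$(i), suppose $\gldim\Q\C\geq2$. Then there are finite-dimensional $P,Q\in\B$ with $\Ext^2_{\B}(P,Q)\ne0$, reducing to simples and using \cref{prop:dimension}. By Dold--Kan, diagrams of simplicial rational vector spaces correspond to chain complexes in $\B$, and a nonzero class $\xi\in\Ext^2(P,Q)$ gives a complex $E$ with $H_n=P$, $H_{n+1}=Q$ that is not split in $\D(\B)$. I take $n\geq2$. Suppose the Dold--Kan realization were equivalent to $K(P,n)\times K(Q,n+1)$. Its $k$-invariant in $\widetilde H^{n+2}_{\C}(K(P,n);Q)$ would then vanish. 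But the UCT spectral sequence \eqref{eq:uct} has $E_2^{2,n}=\Ext^2(P,Q)$, and the class of $E$ projects to $\xi$ there; this is checked using $\widetilde H_{n+1}K(P,n)=0$ in the relevant range for $n\geq2$ rationally. The contradiction gives (i). I would check the small-degree vanishing carefully, taking $n$ even or large to avoid $\Sym^2$ contributions.
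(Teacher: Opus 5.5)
Your proposal is correct and follows essentially the same route as the paper. For (i)$\Rightarrow$(ii), the paper uses exactly your fallback: naturality of the Postnikov invariant under the self-map $[2]$, which acts by $2$ on the homotopy coefficients, gives $[2]^*k=2k$, and \cref{lem:weights} with \eqref{eq:uct-short} makes $[2]^*-2$ invertible. You can therefore drop the primitivity discussion; note also that the unit conditions do not kill the cross terms, since they only control the restriction to $\X_n\vee\X_n$.

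For (iii)$\Rightarrow$(i), the paper likewise Dold--Kan realizes a nonzero two-extension. It takes $n\geq3$, so that $n+2<2n$ and \eqref{eq:uct} gives $\widetilde H^{n+2}_{\C}(K(\underline V,n);\underline W)\cong\Ext^2_{\B}(\underline V,\underline W)$ outright. This settles your small-degree check; choosing $n$ even does not avoid the degree-$2n$ homology, whereas choosing $n\geq3$ does.
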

\begin{proof}
Assume (i) and rationalize $\X$. Write $Y=\X_{n-1}$ and $\underline W=\underline\pi_n(\X)$. The next stage is classified by
\[
 k_n\in\widetilde H^{n+1}_{\C}(Y;\underline W).
\]
The power map of $\X$ induces the power map on $Y$ and multiplication by $2$ on $\underline W$. Naturality of Postnikov invariants gives
\begin{equation}\label{eq:power-k}
 [2]^*k_n=2k_n.
\end{equation}
Apply \eqref{eq:uct-short} with $m=n+1$. Objectwise, $Y$ is a rational H-space with homotopy concentrated below $n$. By \cref{lem:weights}, the operator $[2]_*-2$ is an invertible natural map on $H_n(Y)$ and $H_{n+1}(Y)$; duality is valid by finite type. Therefore $[2]^*-2$ is invertible on both outer terms of \eqref{eq:uct-short}, and hence on its middle term. Equation \eqref{eq:power-k} forces $k_n=0$.

This holds for every $n\geq2$. Choose a splitting of each zero Postnikov extension over the preceding stage. These choices give compatible equivalences of towers. Passing to the homotopy inverse limit gives \eqref{eq:h-splitting}, since a simply connected space is the homotopy inverse limit of its Postnikov tower and the product has only finitely many factors contributing to each homotopy degree. This proves (ii). The implication (ii)$\Rightarrow$(iii) is immediate.

For the converse, suppose $\gldim(\Q\C)>1$. There are finite-dimensional $\underline V,\underline W\in\B$ and a nonzero
\[
 e\in\Ext^2_{\B}(\underline V,\underline W).
\]
Indeed, vanishing of all such groups is equivalent to every first syzygy being projective, hence to heredity. Represent $e$ by
\begin{equation}\label{eq:twoextension}
 0\longrightarrow \underline W\longrightarrow E_1\xrightarrow{d}E_0\longrightarrow \underline V\longrightarrow0.
\end{equation}
For $n\geq3$, place $E_1$ in homological degree $n+1$ and $E_0$ in degree $n$, and apply Dold--Kan objectwise. This gives a diagram $Z$ of simplicial rational vector spaces with $\pi_nZ=\underline V$ and $\pi_{n+1}Z=\underline W$. Its Postnikov connecting class is the two-extension \eqref{eq:twoextension}.

To check that this class remains nonzero after forgetting the abelian structure, apply \eqref{eq:uct} to $K(\underline V,n)$. Below degree $2n$, its only nonzero reduced rational homology is $\underline V$ in degree $n$. Since $n+2<2n$, the spectral sequence identifies
\begin{equation}\label{eq:ext-two-cohomology}
 \widetilde H^{n+2}_{\C}(K(\underline V,n);\underline W)\cong\Ext^2_{\B}(\underline V,\underline W).
\end{equation}
No differential can enter or leave this term in the indicated total degree. Under this identification the Postnikov invariant of $Z$ is $e$; this also follows directly by truncating its two-term chain complex. A product of the two Eilenberg--MacLane diagrams has zero invariant. Hence $Z$ does not split even as a diagram of spaces, contradicting (iii).
\end{proof}

\begin{remark}\label{rem:stable}
The nonsplit examples in the proof are strict abelian group diagrams and admit arbitrarily many compatible deloopings. Thus objectwise rational H-space splittings, and even objectwise infinite loop space structures, do not by themselves give a splitting of the whole diagram. The obstruction lies in the coefficient category.
\end{remark}

\Needspace{12\baselineskip}

\begin{theorem}\label{thm:equivalence}
Suppose $\C$ is good and satisfies the $\otimes$-condition. The following are equivalent:
\begin{enumerate}
\item $\cdim_{\Q}(\C)\leq1$;
\item every simply connected rational diagram of finite type is algebraic;
\item every simply connected two-stage diagram of simplicial rational vector spaces of finite type is algebraic;
\item every simply connected H-diagram of finite rational type splits as in \eqref{eq:h-splitting}.
\end{enumerate}
\end{theorem}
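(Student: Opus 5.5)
The plan is to prove the cycle (i)$\Rightarrow$(ii)$\Rightarrow$(iii)$\Rightarrow$(i), and to attach (iv) to (i) through \cref{thm:h-splitting}.

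\textbf{Known implications.} (i)$\Rightarrow$(ii) is the last assertion of \cref{cor:gaps}. If $\cdim_{\Q}(\C)\leq1$, every $\underline V_n$ has injective dimension at most one, so each type is algebraic. (ii)$\Rightarrow$(iii) is a specialization: a diagram of simplicial rational vector spaces is, after Dold--Kan and forgetting structure, a simply connected rational diagram of finite type. (i)$\Leftrightarrow$(iv) is \cref{thm:h-splitting}, (i)$\Leftrightarrow$(ii). This requires identifying $\cdim_{\Q}(\C)$ with $\gldim(\Q\C)$, which holds by definition in \cref{prop:dimension}. That proposition also shows that the global dimensions of $\Q\C$ and $\Q\C^{\op}$ agree, so it does not matter whether heredity is measured on covariant or contravariant systems.

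\textbf{The remaining step, (iii)$\Rightarrow$(i).} I argue by contraposition, reusing the construction in the proof of \cref{thm:h-splitting}. Suppose $\cdim_{\Q}(\C)\geq2$. By the argument there, I choose finite-dimensional contravariant systems with a nonzero class in $\Ext^2_{\B}(\underline V,\underline W)$. I would take these to be simple: the global dimension of a finite-dimensional algebra is detected by Ext between simples, via long exact sequences in composition series. Dualizing objectwise turns this into a nonzero class in $\Ext^2_{\R}(S,S')$ for simple covariant systems $S=\underline W^\vee$ and $S'=\underline V^\vee$. The Dold--Kan diagram $Z$ with $\pi_n=\underline V$ and $\pi_{n+1}=\underline W$ is a two-stage diagram of the kind in (iii). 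By \eqref{eq:ext-two-cohomology}, its Postnikov invariant is $e\neq0$.

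\textbf{Showing $Z$ is not algebraic.} I apply \cref{thm:spectral-criterion} with the simple system $S=\underline V_{n+1}=\underline W^\vee$. Here $E_2^{0,n+1}=\Hom(S,S)\neq0$ and $E_2^{2,n}=\Ext^2_{\R}(S,\underline V_n)\ni e^\vee$. The only possible differential between them is $d_2:E_2^{0,n+1}\to E_2^{2,n}$. The key claim is that $d_2(\id_S)$ is, up to sign, the linear Postnikov attachment class $\lambda_{n+1}$ of \eqref{eq:linear-k}, composed with $S\to\underline V_{n+1}$. For a two-stage type, $\Qind\M$ is then the cone of $\lambda_{n+1}:\underline V_{n+1}[-n-2]\to\Inj^\bullet_n(\underline V_n)$. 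By \cref{lem:postnikov}, with only two stages, $\lambda_{n+1}$ is an honest element of $\Ext^2_{\R}(\underline V_{n+1},\underline V_n)$. The hyper-Ext differential $d_2$ of a two-step filtration is Yoneda composition with this extension class, exactly as in the square of \cref{subsec:square}.

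\textbf{The main obstacle.} The difficult point is identifying this linear attachment class with the nonzero $k$-invariant $e$ (dualized). The $k$-invariant lives in $\widetilde H^{n+2}_{\C}(K(\underline V,n);\underline W)$. Projecting it to indecomposables corresponds to the edge map onto the $\Hom/\Ext$ of the lowest homology in \eqref{eq:uct}. In the range $n+2<2n$ this edge map is the isomorphism \eqref{eq:ext-two-cohomology}, because $K(\underline V,n)$ has no decomposables in those degrees: the model $\Sym(\Inj^\bullet_{n+1}(\underline V^\vee))$ is linear below degree $2n$. So the attachment $a$ takes values in linear generators, and $\lambda_{n+1}$ is exactly $e^\vee$. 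Hence $d_2\neq0$, the spectral sequence does not collapse, $Z$ is not algebraic, and (iii) fails. As a cross-check, the explicit square example exhibits exactly this situation with $\Ext^2(S_4,S_1)\cong k$.
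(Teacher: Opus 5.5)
Your proposal is correct and follows the paper's proof. The first three implications rest on the same citations. For (iii)$\Rightarrow$(i) the paper likewise picks simple systems with nonzero $\Ext^2$ and dualizes; it then observes that the geometric model of the resulting two-stage abelian diagram has nonzero linear differential $\alpha:\underline W^{\vee}\to\Inj^2(\underline V^{\vee})$ on the socle of the new generators, which is exactly your nonzero $d_2(\id_S)$ in \cref{thm:spectral-criterion}.

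The differences are only in packaging. The paper checks the socle differential directly and builds the model from $\alpha$. Your degree-range argument ($n+2<2n$) spells out the identification of the linear attachment class with $e$, which the paper asserts in one line. One small slip: the base stage should be $\Sym(\Inj^\bullet_{n}(\underline V^{\vee}))$, not $\Inj^\bullet_{n+1}$.
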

\begin{proof}
The implication (i)$\Rightarrow$(ii) is \cref{cor:gaps}, and (ii)$\Rightarrow$(iii) is immediate. The equivalence of (i) and (iv) is \cref{thm:h-splitting}.

To prove (iii)$\Rightarrow$(i), suppose $\cdim_{\Q}(\C)>1$. We may choose simple $\underline V,\underline W\in\B$ with $\Ext^2_{\B}(\underline V,\underline W)\ne0$. Otherwise a finite-length induction in each variable would give vanishing for all finite-dimensional modules and hence heredity. Dualize to obtain a nonzero class in $\Ext^2_{\R}(\underline W^{\vee},\underline V^{\vee})$. Since $\underline W^{\vee}$ is simple and the resolution of $\underline V^{\vee}$ is minimal, this class is represented by a nonzero map
\[
 \alpha:\underline W^{\vee}\longrightarrow \Inj^2(\underline V^{\vee}).
\]
There are no coboundaries in the Hom complex from $\underline W^{\vee}$ into that resolution.

Start the geometric model with $\Sym \Inj^\bullet_{n}(\underline V^{\vee})$, $n\geq3$, and attach $\underline W^{\vee}$ in degree $n+1$ using a linear extension of $\alpha$ to its injective resolution. This is the geometric model of the two-stage abelian diagram constructed from the corresponding two-extension. On the socle $\underline W^{\vee}\subset \Inj^0(\underline W^{\vee})$ of the new generators, its linear differential is $\alpha\ne0$; the internal resolution differential vanishes there. This geometric model is not algebraically minimal, contradicting (iii).
\end{proof}

\begin{remark}[Checking the dimension bound by factorization]\label{rem:factorization}
Li's characterization \cite[Theorem~1.2 and Proposition~2.8]{Li11} says that $k\C$ is hereditary precisely when $\C$ is a finite free EI-category and all automorphism-group orders are invertible in $k$. In characteristic zero the latter condition is automatic. The characterization stated over an algebraically closed field also applies to $\Q$: for finite-dimensional algebras with separable semisimple quotient, heredity is preserved and reflected by scalar extension. Here that quotient is $\prod_c\Q G_c$.

For a skeleton of $\C$, the required unique factorization property is concrete. Call a nonisomorphism \emph{unfactorizable} if it is not a composite of two nonisomorphisms. Given two factorizations of the same arrow into unfactorizable arrows, their lengths and intermediate objects must agree. Writing these factorizations as $\alpha_r\cdots\alpha_1$ and $\beta_r\cdots\beta_1$, there must be intermediate automorphisms $g_i$ such that
\[
 \beta_i=g_i\alpha_i g_{i-1}^{-1}\qquad(1\leq i\leq r),
 \qquad g_0=g_r=\id.
\]
Every nonisomorphism admits such a factorization: repeatedly factor any remaining factorizable arrow; finiteness and the EI-condition force this process to stop \cite[Proposition~2.6]{Li11}. The displayed compatibility is Li's unique factorization property \cite[Definition~2.7]{Li11}. By his Proposition~2.8 and Theorem~1.2, it is equivalent over $\Q$ to $\cdim_{\Q}(\C)\leq1$. This gives a finite test: for each arrow, enumerate its factorizations into unfactorizable arrows and check equality of lengths and intermediate objects, then the displayed equations in the finite automorphism groups.

For a finite poset $P$, regarded as a category with one arrow $u\to v$ when $u\leq v$, an unfactorizable arrow is a \emph{cover}: $u<v$ with no element strictly between them. The directed \emph{Hasse diagram} has these covers as edges. A \emph{saturated chain} from $u$ to $v$ is a directed path of covers, hence precisely a factorization of $u\to v$ into unfactorizable arrows. All automorphism groups are trivial, so Li's criterion reduces to
\[
 \cdim_{\Q}(P)\leq1
 \quad\Longleftrightarrow\quad
 \text{each comparable pair has a unique saturated chain.}
\]
The \emph{free category} on a directed graph has paths as morphisms and concatenation as composition. The natural functor from the free category on the Hasse diagram to $P$ identifies all paths with the same endpoints; it is an isomorphism exactly under the displayed uniqueness condition. This explains the phrase ``free on its Hasse diagram.'' For example, a linearly ordered poset satisfies the condition. In the square of \cref{subsec:square}, the paths $4\to2\to1$ and $4\to3\to1$ have the same composite but different intermediate objects. They violate uniqueness, and \eqref{eq:square-resolution} computes the resulting dimension as two.
\end{remark}

\begin{corollary}[Finite group actions]\label{cor:groups}
Let $G$ be finite. Every pointed $G$-space whose fixed-point spaces are simply connected H-spaces with a compatible equivariant multiplication and have finite rational type splits rationally into equivariant Eilenberg--MacLane spaces if and only if $G$ is trivial or cyclic of prime-power order.
\end{corollary}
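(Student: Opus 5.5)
The plan is to reduce \cref{cor:groups} to \cref{thm:h-splitting} applied to $\C=\Orb_G$, and then to \cref{rem:factorization}, which reduces the dimension bound to a combinatorial test. A pointed $G$-space with compatible equivariant multiplication gives, through its fixed-point diagram $G/H\mapsto\operatorname{Sing}(X^H)$, an H-diagram in the sense of \cref{def:hobject}. Equivariant Eilenberg--MacLane spaces correspond to the diagrams $K(\underline W,n)$. By Elmendorf's theorem the equivariant homotopy category is equivalent to the homotopy category of $\Orb_G$-diagrams, so every H-diagram arises this way, and the two splitting statements coincide. By \cref{prop:orbit}, $\Orb_G$ is good, so \cref{thm:h-splitting} shows that universal splitting holds if and only if $\gldim(\Q\Orb_G)\leq1$. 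This reduces the corollary to the following claim:
\[
 \cdim_{\Q}(\Orb_G)\leq1\iff G\text{ is trivial or cyclic of prime-power order.}
\]

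For the implication ``$\Leftarrow$'', I use Li's unique factorization criterion from \cref{rem:factorization}. If $G=C_{p^r}$, the subgroups form the chain $1<C_p<\cdots<C_{p^r}$. The abelian group gives $\Orb_G(G/H,G/K)=G/K$ when $H\leq K$, and every map $G/H\to G/K$ is the projection composed with an automorphism of $G/K$. The unfactorizable arrows are therefore the maps $G/C_{p^i}\to G/C_{p^{i+1}}$, composed with automorphisms. A factorization of $G/C_{p^i}\to G/C_{p^j}$ must pass through every intermediate subgroup, so the intermediate objects agree. The group-element bookkeeping then gives the required $g_i$, since every automorphism group is a quotient of the abelian group $G$ and the projections are $G$-equivariant.

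For ``$\Rightarrow$'', I show that failure of the criterion produces $\gldim\geq2$. If $G$ is not cyclic of prime-power order, then either two distinct subgroups of some order exist, or $G$ is cyclic of order divisible by two distinct primes $p,q$. In the second case, the arrow $G/1\to G/C_{pq}$ has the two factorizations through $G/C_p$ and through $G/C_q$, which have different intermediate objects. This violates the criterion exactly as in the square of \cref{subsec:square}; for $C_p\times C_q$ this matches Triantafillou's nonsplit examples. In the noncyclic case, some subgroup $K\leq G$ has two distinct maximal subgroups $H_1,H_2$, and I reduce to the chain $1\to\cdots\to K$. For a noncyclic $p$-group, take $K\cong C_p\times C_p$: the arrow $G/1\to G/K$ factors through $G/H_1$ and through $G/H_2$, and if these are nonconjugate the intermediate objects differ.

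The main obstacle is the noncyclic case where the two factorizations pass through conjugate subgroups, so that the intermediate objects coincide in the skeleton. There I would need to show that no automorphisms $g_i$ satisfy the compatibility equations. I would check this directly in $N_G(H)/H$, using that the two factorizations differ by an element of $G$ that does not normalize appropriately. As a fallback, I would compute $\Ext^2$ between simple functors on the full subcategory spanned by $G/1$, $G/H$ and $G/K$ using the envelope formula \eqref{eq:envelope}. Restriction to such an interval of objects preserves nonvanishing of $\Ext^2$, because injectives coinduced from the subcategory stay injective.
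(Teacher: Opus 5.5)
Your reduction to \cref{thm:h-splitting} through Elmendorf's theorem and \cref{prop:orbit} agrees with the paper, and so does your verification of Li's unique factorization property for $C_{p^r}$. Your treatment of cyclic $G$ with two prime divisors, via the factorizations of $G/1\to G/C_{pq}$ through $G/C_p$ and $G/C_q$, is also correct. The gap is the noncyclic half of ``$\Rightarrow$''. You only handle the case where the competing intermediate subgroups are nonconjugate. The conjugate case, which you call the ``main obstacle'', is left with a plan (an element ``that does not normalize appropriately'') that is not yet an argument. Your concrete recipe also breaks down: a noncyclic $p$-group need not contain $C_p\times C_p$, since generalized quaternion groups such as $Q_8$ have a unique subgroup of order $2$. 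The $\Ext^2$ fallback depends on a comparison between $\Ext$ over a full subcategory and over $\Orb_G$ that you have not justified. Coinduction preserves injectives, but you would also need it to be exact and to send the relevant simple to a simple.

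The missing idea can be supplied in either of two ways. The paper takes $K=G$. Li's condition for the unique arrow $G/1\to G/G$ forces any two maximal subgroup chains from $1$ to $G$ to have the same length and conjugate terms, so all maximal subgroups of $G$ are conjugate. If $G$ were noncyclic, every element would lie in a maximal subgroup, so $G$ would be the union of the conjugates of a single proper subgroup $M$. That is impossible, since this union has at most $[G:M](|M|-1)+1<|G|$ elements. Thus $G$ is cyclic, and a cyclic group with only one maximal subgroup has prime-power order.

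Alternatively, you can dispose of your obstacle directly. For a factorization $\alpha_r\cdots\alpha_1$ of $G/1\to G/G$ into unfactorizable arrows, put $K_i=\operatorname{Stab}_G(\alpha_i\cdots\alpha_1(e))$ for $e\in G/1$; this is a maximal chain of subgroups. The relations $\beta_i=g_i\alpha_ig_{i-1}^{-1}$ with $g_0=\id$ give $\beta_i\cdots\beta_1=g_i\alpha_i\cdots\alpha_1$. A $G$-equivariant automorphism does not change stabilizers, so the chain $(K_i)$ is invariant under Li's equivalence even when the intermediate objects coincide. Every maximal chain of subgroups arises from some factorization, so Li's condition forces the maximal chain to be unique. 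Hence $G$ has a unique maximal subgroup, which already gives cyclic of prime-power order without a case split.
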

\begin{proof}
By \cref{prop:orbit}, the orbit category satisfies all the algebraic hypotheses. Elmendorf's correspondence identifies its contravariant diagram homotopy theory with genuine equivariant homotopy theory \cite{Elmendorf83}; it preserves products at the level of the homotopy category. It remains to identify when $\Q\Orb_G$ is hereditary.

If $G$ is cyclic of prime-power order, its subgroups form a chain. Every orbit map factors along that chain, and two such factorizations differ by automorphisms of the intermediate orbits. Thus $\Orb_G$ is a free EI-category and its algebra is hereditary \cite{Li11}. The trivial group is immediate.

Conversely, if the orbit category is free, every two maximal subgroup chains from $1$ to $G$ are conjugate, since they factor the unique map $G/1\to G/G$. In particular, all maximal subgroups are conjugate. A finite group cannot be the union of the conjugates of one proper subgroup: for the transitive action on its cosets, the average number of fixed points is one, while the identity has more than one fixed point, so some group element has no fixed point. If $G$ were noncyclic, every element would generate a proper subgroup and hence belong to a maximal subgroup. Therefore $G$ is cyclic. In a cyclic group the maximal subgroups are individually conjugacy-invariant, so there is only one; its order is a power of a single prime. Now apply \cref{thm:h-splitting}.
\end{proof}

This recovers the finite-group splitting phenomenon studied in \cite{Triantafillou83} from the category-algebra criterion, with a proof of the diagram splitting supplied above.

\clearpage
\section{Examples of minimal models}\label{sec:examples}
We work over $\Q$. Each model is displayed as a diagram of DGCAs. Subscripts on generators give their degrees and are suppressed in differential formulas. Unlisted differentials vanish. Unless stated otherwise, arrows fix the generators common to their source and target and send omitted generators to zero; the arrow labels record these zero images.

\subsection{Spheres and projective spaces}\label{subsec:classical-examples}
When $\C$ has one object and only the identity morphism, every coefficient system is injective and its socle is the whole vector space. Both notions of minimality therefore reduce to Sullivan minimality. The basic models are
\begin{align*}
 \M(S^{2r+1})&=(\Sym(x_{2r+1}),0),&&r\geq1,\\
 \M(S^{2r})&=(\Sym(x_{2r},y_{4r-1}),d),&&dy=x^2,\quad r\geq1,\\
 \M(\mathbb{CP}^{m})&=(\Sym(x_2,y_{2m+1}),d),&&dy=x^{m+1},\quad m\geq1.
\end{align*}
These classical calculations are recalled in \cite[Section~2.1]{Hess06}. In the odd-sphere model the generator is exterior. In the other two models, multiplication by $x^2$ or $x^{m+1}$ is injective in $\Q[x]$, so the cohomology is respectively $\Q[x]/(x^2)$ or $\Q[x]/(x^{m+1})$. The differentials are decomposable, and adjoining $y$ is an elementary extension after adjoining $x$.

Tensor products give the models of products. For example,
\[
 \M(S^2\times S^3)
 =\bigl(\Sym(x_2,u_3,z_3),\ du=x^2,\ dz=0\bigr).
\]
Here $u$ kills the square of the degree-two cohomology class, while $z$ represents the independent degree-three class.

\subsection{Eilenberg--MacLane diagrams over an arrow}\label{ex:arrow-em}
Let $\C=(c\to a)$ and $n\geq2$. For the space diagram $K(\Q,n)\to *$, with $K(\Q,n)$ at $a$, both minimal models are
\begin{equation}\label{eq:example-arrow-em}
\begin{tikzcd}[column sep=large]
 \begin{gathered}c:\ \Sym(u_n,v_{n+1})\\du=v\end{gathered}
 \arrow[r,"v\mapsto0"]
 &\begin{gathered}a:\ \Sym(u_n)\\d=0\end{gathered}.
\end{tikzcd}
\end{equation}
The generators form the minimal injective resolution
\begin{equation}\label{eq:example-arrow-resolution}
 0\longrightarrow(0\to\Q)
 \longrightarrow(\Q\xrightarrow{\id}\Q)
 \longrightarrow(\Q\to0)\longrightarrow0,
\end{equation}
placed in degrees $n,n+1$. This proves geometric minimality. The socle generators are $u$ at $a$ and $v$ at $c$, and their differentials vanish there, proving algebraic minimality. The left algebra is contractible: extending $h(v)=u$, $h(u)=0$ as a derivation makes $dh+hd$ the word-length operator, invertible on the augmentation ideal. The right algebra models $K(\Q,n)$. Although $(u,v)$ is contractible at $c$, it cannot be cancelled naturally because $u$ has a nonzero restriction to $a$.

For the reverse space diagram $*\to K(\Q,n)$, both minimal models are
\[
\begin{tikzcd}[column sep=large]
 \begin{gathered}c:\ \Sym(u_n)\\d=0\end{gathered}
 \arrow[r,"u\mapsto0"]&a:\ \Q.
\end{tikzcd}
\]
Here the generator system $(\Q\to0)$ is already injective.

For a linear map $\phi:U\to W$ between finite-dimensional spaces, choose splittings of its dual and put
\[
 K=\ker\phi^\vee,\qquad R=\im\phi^\vee,\qquad L=\coker\phi^\vee.
\]
Thus $W^\vee\cong K\oplus R$ and $U^\vee\cong R\oplus L$. A subscript places a vector space in that degree, and $L'$ is a second copy of $L$. Both minimal models of $K(U,n)\to K(W,n)$ are the explicit DGCA diagram
\begin{equation}\label{eq:example-arrow-general}
\begin{tikzcd}[row sep=large]
 \begin{gathered}
 c:\ \Sym(K_n\oplus R_n\oplus L_n\oplus L'_{n+1})\\
 d\ell=\ell'\quad(\ell\in L)
 \end{gathered}\arrow[d,"{K,L'\mapsto0}"]\\
 \begin{gathered}a:\ \Sym(R_n\oplus L_n)\\d=0\end{gathered}.
\end{tikzcd}
\end{equation}
The arrow fixes $R$ and $L$. Each pair $(\ell,\ell')$ is a copy of \eqref{eq:example-arrow-em}; the remaining generators are closed injective systems. This proves both minimality conditions. Cancelling these pairs at $c$ recovers the map $\phi^\vee$. Different splittings give isomorphic minimal diagrams.

\subsection{Tubular-neighbourhood squares}\label{ex:tubular-square}
Let $M\hookrightarrow N$ be a smooth embedding of a closed submanifold of positive codimension, with normal bundle $\nu$. Choose a closed tubular neighbourhood $T=D(\nu)$, and put
\[
 B=\partial T=S(\nu),\qquad
 E=\overline{N\setminus T}=N\setminus\operatorname{int}T.
\]
The inclusions form the pushout square
\begin{equation}\label{eq:tubular-topological}
\begin{tikzcd}[column sep=large,row sep=large]
 B\arrow[r,hook]\arrow[d,hook]&T\arrow[d,hook]\\
 E\arrow[r,hook]&N=T\cup_B E .
\end{tikzcd}
\end{equation}
Both inclusions out of $B$ are cofibrations, by collars, so this is also a homotopy pushout. Assume that its four spaces are simply connected and have finite rational type. A sufficient geometric hypothesis is that $M,N$ are closed simply connected manifolds and $\operatorname{codim}(M,N)\geq3$: the sphere bundle $B\to M$ has simply connected fibre, and general position shows that $E\simeq N\setminus M$ is simply connected. A point of $B$ supplies compatible basepoints.

\smallskip\noindent\textbf{A general construction.}
Use the square category of \cref{subsec:square}, with $4=N$, $2=T$, $3=E$, and $1=B$. Write $\M_N,\M_T,\M_E,\M_B$ for the four algebras of a model.
Choose an augmented DGCA model of the boundary span, written in the algebra direction as
\[
 R\xrightarrow{f} A\xleftarrow{g} S,
 \qquad R\simeq\A_{\PL}(T),\quad
 A\simeq\A_{\PL}(B),\quad S\simeq\A_{\PL}(E).
\]
Here the two maps are part of the model data; the individual algebras alone do not specify the gluing. Replace $f$, if necessary, by a surjective map $\widetilde f:\widetilde R\twoheadrightarrow A$ through a quasi-isomorphism $R\qi\widetilde R$. One explicit choice uses the augmented path algebra
\[
 A^I=\Q\oplus\bigl(A_+\otimes\Q[t,dt]\bigr),\qquad
 \widetilde R=R\times_{A,\mathrm{ev}_0}A^I,
 \qquad \widetilde f=\mathrm{ev}_1.
\]
Constant paths give the quasi-isomorphism; the kernel of evaluation at $0$ is contracted by integration. Evaluation at $1$ is surjective, since $(0,ta)$ lifts each $a\in A_+$. Consequently the strict pullback square
\begin{equation}\label{eq:tubular-pullback}
\begin{tikzcd}[column sep=large,row sep=large]
 P=\widetilde R\times_A S\arrow[r]\arrow[d]
 &\widetilde R\arrow[d,two heads,"\widetilde f"]\\
 S\arrow[r,"g"']&A
\end{tikzcd}
\end{equation}
is a homotopy pullback of DGCAs and models the whole square \eqref{eq:tubular-topological}, with $P$ modelling $N$. Indeed, polynomial forms turn the homotopy pushout into a homotopy pullback. This can also be checked on a compatible triangulation: restriction of polynomial forms to a subcomplex is surjective, and the forms on the union are the pullback of the forms on the two pieces. Models of such Poincar\'e embedding squares are studied in \cite{LambrechtsStanley05}; the construction here uses the full boundary span and imposes no additional high-codimension hypothesis once the four spaces satisfy our connectivity and finiteness assumptions.

Apply the geometric construction to \eqref{eq:tubular-pullback}, using \eqref{eq:tubular-envelope} to resolve each homotopy-dual system. The resolutions have length at most two, so a degree-$n$ homotopy system contributes generators only in degrees $n,n+1,n+2$. Group the generators according to the vertices supporting their socles: $W_1$ occurs at every vertex, $W_2$ at $N,T$, $W_3$ at $N,E$, and $W_4$ only at $N$. The minimal model is a DGCA diagram of the form
\begin{equation}\label{eq:tubular-general-model}
\begin{tikzcd}[column sep={3.2cm,between origins},row sep=large]
 &N:\ (\Sym(W_1\oplus W_2\oplus W_3\oplus W_4),d)
 \arrow[dl,"{W_3,W_4\mapsto0}"']
 \arrow[dr,"{W_2,W_4\mapsto0}"]&\\
 T:\ (\Sym(W_1\oplus W_2),d)\arrow[dr,"W_2\mapsto0"']
 &&E:\ (\Sym(W_1\oplus W_3),d)\arrow[dl,"W_3\mapsto0"]\\
 &B:\ (\Sym W_1,d).&
\end{tikzcd}
\end{equation}
Here $d$ is the elementary-extension differential \eqref{eq:extension}, consisting of the resolution terms and the extended $k$-invariant cocycles; at the lower vertices it is obtained by the displayed projections. The boundary span determines these cocycles. Natural cancellation by \cref{lem:cancellation,thm:algebraic-existence} gives the algebraically minimal model in the same diagram form, with fewer generators when cancellation occurs. One must transport the full differential, including its nonlinear terms.

\smallskip\noindent\textbf{The standard sphere embedding.}
Let $p,q\geq3$ be odd, and take the standard embedding $S^p\subset S^{p+q+1}$. Its normal bundle is trivial, and the decomposition
\[
 \partial(D^{p+1}\times D^{q+1})
 =(S^p\times D^{q+1})\cup_{S^p\times S^q}(D^{p+1}\times S^q)
\]
identifies, after smoothing corners, the tubular square with
\[
 B=S^p\times S^q,\qquad
 T=S^p\times D^{q+1},\qquad E=D^{p+1}\times S^q,\qquad
 N=S^{p+q+1}.
\]
A model minimal in both diagram senses is
\begin{equation}\label{eq:tubular-sphere-model}
\begin{tikzcd}[column sep={3.2cm,between origins},row sep=large]
 &\begin{gathered}
 N:\ \Sym(x_p,y_q,a_{p+1},b_{q+1},z_{p+q+1})\\
 dx=a,\quad dy=b,\quad dz=ab
 \end{gathered}\arrow[dl,"{a,z\mapsto0}"']
 \arrow[dr,"{b,z\mapsto0}"]&\\
 \begin{gathered}T:\ \Sym(x_p,y_q,b_{q+1})\\dy=b\end{gathered}
 \arrow[dr,"b\mapsto0"']&&
 \begin{gathered}E:\ \Sym(x_p,a_{p+1},y_q)\\dx=a\end{gathered}
 \arrow[dl,"a\mapsto0"]\\
 &\begin{gathered}B:\ \Sym(x_p,y_q)\\d=0\end{gathered}.&
\end{tikzcd}
\end{equation}
Unlisted differentials vanish. Every arrow preserves the generators present at its target and kills the others. In particular the two composites agree, and $dz=ab$ restricts to zero on both sides. At $N$ the substitution
\[
 \zeta=z-xb,\qquad d\zeta=0,
\]
gives the objectwise decomposition
\[
 \M_N\cong\Sym(\zeta_{p+q+1})
 \otimes(\Sym(x_p,a_{p+1}),\ dx=a)
 \otimes(\Sym(y_q,b_{q+1}),\ dy=b).
\]
The last two factors are contractible; similarly $\M_T\simeq\Sym(x_p)$ and $\M_E\simeq\Sym(y_q)$. This verifies the four objectwise homotopy types, but the gluing requires a further check.

The restriction maps $r_T:\M_T\to\M_B$ and $r_E:\M_E\to\M_B$ are surjective. Put $L=\M_T\times_{\M_B}\M_E$ and let $\rho:\M_N\to L$ be the map induced by the two restrictions. The short exact sequence of complexes
\[
 0\longrightarrow L\longrightarrow\M_T\oplus\M_E
 \xrightarrow{\,r_T-r_E\,}\M_B\longrightarrow0
\]
gives $H^*(L)\cong\Sym(\eta_{p+q+1})$, where $\eta=\delta[xy]$. Indeed, the maps on cohomology identify the classes $x$ and $y$ separately, and the only remaining positive-degree class of $\M_B$ is $xy$. Its connecting class is represented by $(-xb,0)$, since $d(xy)=-xb$ at $T$. But
\[
 \rho(\zeta)=(-xb,0),
\]
so $\rho$ is a quasi-isomorphism. The inclusions of the closed generators $x$ at $T$ and $y$ at $E$, together with the identity at $B$, identify the remaining span with the usual models of the two projections $S^p\times S^q\to S^p,S^q$. Therefore \eqref{eq:tubular-sphere-model} models the complete tubular square, including its homotopy-pushout structure.

For geometric minimality, $(x,a)$ and $(y,b)$ are the two-term minimal injective resolutions of the degree-$p$ and degree-$q$ homotopy-dual systems. Their kernels are supported on $T,B$ and $E,B$, respectively. Adjoin them in increasing homotopy degree, together when $p=q$, and then adjoin $z$ in degree $p+q+1$. Its system is injective and supported only at $N$, and the cocycle $ab$ restricts to zero on both sides. These are elementary extensions. The socle generators are $x,y$ at $B$, $a$ at $E$, $b$ at $T$, and $z$ at $N$; their differentials are zero or decomposable. Hence the displayed model is also algebraically minimal.

In particular, setting $p=q=3$ in the displayed diagram gives the tubular square of $S^3\subset S^7$.
The nonlinear term $ab$ records the gluing. Replacing $dz=ab$ by $dz=0$ preserves all four objectwise rational homotopy types, but the ambient sphere class then maps to zero in $H^{p+q+1}(L)$, so the square ceases to be a homotopy pullback of DGCAs.

\begin{example}[Homotopy automorphisms of the tubular square]\label{ex:tubular-automorphisms}
Let $\X$ be the tubular square of the standard embedding $S^3\subset S^7$, with all four vertices labelled. Its group of pointed rational homotopy automorphisms is
\begin{equation}\label{eq:tubular-automorphism-group}
 \mathcal E_{\Q}(\X)\cong\Q^\times\times\Q^\times.
\end{equation}
Under this identification, $(\lambda,\mu)$ acts by $\lambda$ on $H^3(T;\Q)$, by $\mu$ on $H^3(E;\Q)$, and by $\lambda\mu$ on $H^7(N;\Q)$.

Indeed, take $p=q=3$ in \eqref{eq:tubular-sphere-model}. Since the structure maps are surjective, a natural automorphism is determined at $N$ and must preserve the two restriction kernels $(a,z)$ and $(b,z)$. Degree considerations give $\phi(x)=\lambda x+\beta y$ and $\phi(y)=\gamma x+\mu y$. The conditions $d\phi(x)\in(a,z)$ and $d\phi(y)\in(b,z)$ force $\beta=\gamma=0$. Moreover,
\[
 \M_N^7=\langle z,xa,xb,ya,yb\rangle_{\Q},
 \qquad\bigl((a,z)\cap(b,z)\bigr)^7=\Q z,
\]
so $\phi(z)=\nu z$. Commuting with $d$ gives $\nu=\lambda\mu$. Conversely, every $\lambda,\mu\in\Q^\times$ defines a natural DGCA automorphism by
\[
 (x,y,a,b,z)\longmapsto
 (\lambda x,\mu y,\lambda a,\mu b,\lambda\mu z).
\]
Its actions on $H^3(T;\Q)$ and $H^3(E;\Q)$ detect $\lambda$ and $\mu$, so it is homotopic to the identity only when $\lambda=\mu=1$. Thus $\Aut_0(\M)$ is trivial, and \cref{lem:homotopy-automorphisms} proves \eqref{eq:tubular-automorphism-group}; the opposite group is immaterial here because the group is abelian. Finally, the ambient sphere class $[z-xb]$ is multiplied by $\lambda\mu$. Consequently, the subgroup fixing the ambient fundamental cohomology class is
\[
 \{(\lambda,\mu)\in(\Q^\times)^2\mid\lambda\mu=1\}
 \cong\Q^\times.
\]
The nonlinear differential $dz=ab$ therefore forces the degree on the ambient sphere to be the product of the degrees on the two pieces.
\end{example}

\begin{remark}[Extension of boundary self-equivalences]\label{rem:boundary-extensions}
For the standard embedding $S^p\subset S^{p+q+1}$ with odd $p,q\geq3$, the boundary is $B=S^p\times S^q$. Its model $(\Sym(x_p,y_q),0)$ identifies its pointed rational homotopy automorphism group as
\[
 \mathcal E_{\Q}(B)\cong
 \begin{cases}
  (\Q^\times)^2,&p\ne q,\\
  \mathrm{GL}_2(\Q),&p=q.
 \end{cases}
\]
Indeed, a graded automorphism scales the two generators when their degrees differ and makes an invertible linear substitution when their degrees agree; its action on cohomology determines its homotopy class.

Restriction of a self-equivalence of the labelled tubular square preserves each of the two lines in $H^*(B;\Q)$ obtained from $T$ and $E$, namely $\Q\cdot[x]$ and $\Q\cdot[y]$ in their respective generator degrees. Thus its image in $\mathcal E_{\Q}(B)$ consists exactly of the diagonal substitutions $x\mapsto\lambda x$, $y\mapsto\mu y$. Every such substitution extends to \eqref{eq:tubular-sphere-model} by $a\mapsto\lambda a$, $b\mapsto\mu b$, $z\mapsto\lambda\mu z$.

Consequently, when $p\ne q$, every pointed rational self-equivalence of $S^p\times S^q$ is homotopic to the boundary restriction of a self-equivalence of the entire embedding square. When $p=q$, this holds precisely for those preserving the two factor lines. In particular, for $S^3\times S^3$ the boundary automorphism $x\mapsto x+y$, $y\mapsto y$ does not extend, even up to rational homotopy, because it does not preserve $\Q\cdot[x]\subset H^3(B;\Q)$. The group computed in \cref{ex:tubular-automorphisms} is therefore the subgroup of boundary self-equivalences compatible with the labelled embedding data.
\end{remark}

\subsection{A projective line in projective three-space}\label{ex:projective-square}
Consider the standard linear embedding $\mathbb{CP}^1\subset N=\mathbb{CP}^3$, with complex normal bundle $\mathcal O(1)\oplus\mathcal O(1)$. Write $T,B,E$ for its tubular neighbourhood, boundary, and exterior as in \cref{ex:tubular-square}. In homogeneous coordinates $[s,t]\in\mathbb P(\mathbb C^2\oplus\mathbb C^2)$, take $T$ and $E$ to be the regions $\|s\|\geq\|t\|$ and $\|s\|\leq\|t\|$. Each retracts onto a projective line, and
\[
 B\cong(S^3\times S^3)/S^1,
\]
where the circle acts diagonally. The boundary maps to the two retracts are $[s,t]\mapsto[s]$ and $[s,t]\mapsto[t]$.

The complete tubular square has the following model, minimal in both senses:
\begin{equation}\label{eq:projective-square-model}
\begin{tikzcd}[column sep={3.2cm,between origins},row sep=large]
 &\begin{gathered}
 N:\ \Sym(x_2,u_3,v_3,a_4,b_4,z_7)\\
 du=x^2+a,\quad dv=x^2+b,\quad dz=ab
 \end{gathered}\arrow[dl,"{a,z\mapsto0}"']
 \arrow[dr,"{b,z\mapsto0}"]&\\
 \begin{gathered}T:\ \Sym(x_2,u_3,v_3,b_4)\\du=x^2,\quad dv=x^2+b\end{gathered}
 \arrow[dr,"b\mapsto0"']&&
 \begin{gathered}E:\ \Sym(x_2,u_3,v_3,a_4)\\du=x^2+a,\quad dv=x^2\end{gathered}
 \arrow[dl,"a\mapsto0"]\\
 &\begin{gathered}B:\ \Sym(x_2,u_3,v_3)\\du=dv=x^2\end{gathered}.&
\end{tikzcd}
\end{equation}
All four arrows are quotient maps, and $d^2=0$ on every generator.

To identify the boundary span, view $B$ as the homotopy pullback of the two classifying maps $\mathbb{CP}^1\to BS^1$ of the Hopf bundles. Indeed, the fibre product over $BS^1$ of their Borel fibrations is the diagonal Borel construction of $S^3\times S^3$, equivalent to $B$. Tensoring their relative Sullivan models over $(\Sym(x_2),0)$ gives the span
\[
 (\Sym(x_2,u_3),\ du=x^2)
 \longrightarrow\M_B\longleftarrow
 (\Sym(x_2,v_3),\ dv=x^2),
\]
with the evident inclusions; see \cite[Theorem~2.4]{Hess06}. These inclusions factor through $\M_T$ and $\M_E$, respectively, by quasi-isomorphisms: at $T$ the pair $(v-u,b)$ is contractible, and at $E$ the pair $(u-v,a)$ is contractible. At $B$, the closed generator $v-u$ splits off, giving $B\simeq_{\Q}S^2\times S^3$.

At $N$, make the invertible change of generators
\[
 a'=a+x^2,\qquad b'=b+x^2,\qquad w=z-ub+x^2v.
\]
Then $du=a'$, $dv=b'$, and
\[
 dw=ab-(x^2+a)b+x^2(x^2+b)=x^4.
\]
Cancelling $(u,a')$ and $(v,b')$ leaves $(\Sym(x_2,w_7),dw=x^4)$, the usual model of $\mathbb{CP}^3$.

For the gluing, put $L=\M_T\times_{\M_B}\M_E$. The restrictions are surjective, so $L$ computes their homotopy pullback. The exact sequence for the difference of the two restrictions gives one-dimensional cohomology in degrees $0,2,4,6$ and zero otherwise. If $X=[(x,x)]\in H^2(L)$ and $\delta$ is the connecting map, then
\[
 X^2=\delta[u-v]\ne0,\qquad
 X^3=\delta[x(u-v)]\ne0.
\]
Indeed, $(u,v)$ lifts $u-v$ and has differential $(x^2,x^2)$; multiplying by $x$ proves the second identity. Thus $H^*(L)=\Q[X]/(X^4)$. The natural map $\M_N\to L$ sends $[x]$ to $X$ and is a quasi-isomorphism. Together with the identified boundary span, this proves that \eqref{eq:projective-square-model} models the whole tubular square.

Finally, $x$ is a closed injective generator system in degree two. The pairs $(u,a)$ and $(v,b)$ resolve the degree-three homotopy-dual systems supported on $T,B$ and $E,B$, with attachment terms $x^2$. Adjoin these minimal resolutions after $x$, and then adjoin the injective generator $z$ supported at $N$, with $dz=ab$. This is the geometric minimal filtration. The linear differential is $d_0u=a$, $d_0v=b$; it vanishes on all socle generators, namely $x,u,v$ at $B$, $a$ at $E$, $b$ at $T$, and $z$ at $N$. Hence the model is also algebraically minimal.

\begin{example}[Automorphisms of the projective embedding and its boundary]\label{ex:projective-automorphisms}
Let $\X$ be this tubular square, with all four vertices labelled. Its pointed rational homotopy automorphism group is
\begin{equation}\label{eq:projective-automorphism-group}
 \mathcal E_{\Q}(\X)\cong\Q^\times.
\end{equation}
The element $\lambda\in\Q^\times$ is represented contravariantly on \eqref{eq:projective-square-model} by
\[
 x\longmapsto\lambda x,\qquad
 (u,v,a,b)\longmapsto\lambda^2(u,v,a,b),\qquad
 z\longmapsto\lambda^4z.
\]
Indeed, a natural DGCA automorphism is determined at $N$ and preserves the restriction ideals $(a,z)$ and $(b,z)$. Necessarily $\phi(x)=\lambda x$. If $\phi(u)=r u+s v$, then
\[
 \phi(a)=d\phi(u)-\phi(x)^2
 =(r+s-\lambda^2)x^2+ra+sb\in(a,z),
\]
so $r=\lambda^2$ and $s=0$. Similarly $\phi(v)=\lambda^2v$, and hence $\phi(a)=\lambda^2a$, $\phi(b)=\lambda^2b$. Since
\[
 \bigl((a,z)\cap(b,z)\bigr)^7=(ab,z)^7=\Q z,
\]
we have $\phi(z)=\nu z$, and $d\phi(z)=\phi(ab)$ gives $\nu=\lambda^4$. Conversely, the displayed substitutions define natural automorphisms. Distinct $\lambda$ act differently on $H^2(N;\Q)$, so none are identified by homotopy. Now apply \cref{lem:homotopy-automorphisms}.

For the boundary, put $y=v-u$. Its ordinary minimal Sullivan model is
\[
 (\Sym(x_2,u_3,y_3),d),\qquad du=x^2,\quad dx=dy=0.
\]
Every pointed rational self-equivalence is represented uniquely by a substitution
\begin{equation}\label{eq:projective-boundary-automorphisms}
 f_{c,\lambda,\mu}^{*}(x)=\lambda x,\qquad
 f_{c,\lambda,\mu}^{*}(y)=\mu y,\qquad
 f_{c,\lambda,\mu}^{*}(u)=\lambda^2u+\mu c y,
\end{equation}
where $c\in\Q$ and $\lambda,\mu\in\Q^\times$. These are all possibilities by degree and the differential equation. All three parameters are detected by the induced maps on the indecomposables, dual to $\pi_2(B)\otimes\Q$ and $\pi_3(B)\otimes\Q$, so distinct substitutions are not homotopic \cite[Theorems~1.24--1.25]{Hess06}. Taking account of the reversal of composition by pullback gives
\[
 (c,\lambda,\mu)(c',\lambda',\mu')
 =\left(c+\frac{\lambda^2}{\mu}c',\lambda\lambda',\mu\mu'\right).
\]
Consequently,
\begin{equation}\label{eq:projective-boundary-group}
 \mathcal E_{\Q}(B)\cong(\Q,+)\rtimes(\Q^\times)^2,
 \qquad (\lambda,\mu)\cdot c=\frac{\lambda^2}{\mu}c.
\end{equation}

Restriction to the boundary is the injective homomorphism
\begin{equation}\label{eq:projective-boundary-restriction}
 \mathcal E_{\Q}(\X)\longrightarrow\mathcal E_{\Q}(B),
 \qquad\lambda\longmapsto(0,\lambda,\lambda^2).
\end{equation}
Its image is exactly the centre of \eqref{eq:projective-boundary-group}: a central element must have $c=0$ and $\lambda^2/\mu=1$. Thus a boundary self-equivalence extends over the labelled square precisely when it has no shear and its multiplier on $H^3(B;\Q)$ is the square of its multiplier on $H^2(B;\Q)$. Such an extension is unique in the rational diagram homotopy category.

The shear is essential information beyond cohomology. For example, $x\mapsto x$, $y\mapsto y$, $u\mapsto u+y$ acts identically on
\[
 H^*(B;\Q)=\Q[x]/(x^2)\otimes\Sym(y_3),
\]
but adds the rational Hopf class of the $S^2$ factor to the $S^3$ generator in $\pi_3(S^2\times S^3)\otimes\Q$. It therefore cannot extend over the square, even up to homotopy. The two boundary projections, represented by the generators $u$ and $v=u+y$, detect this obstruction.

The extension criterion above allows the ambient component to be any rational self-equivalence. The ordinary model $(\Sym(x_2,w_7),dw=x^4)$ shows that evaluation at $N$ identifies \eqref{eq:projective-automorphism-group} with $\mathcal E_{\Q}(\mathbb{CP}^3)$. Thus each ambient rational self-equivalence extends uniquely to the labelled square, up to diagram homotopy. Its action on the ambient fundamental cohomology class $[x^3]$ is multiplication by $\lambda^3$, as is its action on the boundary class $[xy]\in H^5(B;\Q)$. Every nonzero rational cube is an allowed ambient degree. For example, the nonidentity boundary automorphism $(c,\lambda,\mu)=(0,2,4)$ extends to the square; its ambient component has degree $8$.

Requiring the extension to fix the ambient fundamental cohomology class is an \emph{additional condition}. It imposes $\lambda^3=1$, hence $\lambda=1$ over $\Q$. Under this additional requirement, a boundary self-equivalence extends if and only if it is homotopic to the identity, and its extension is the identity diagram homotopy class. With ambient degree unrestricted, the entire family $(0,\lambda,\lambda^2)$, $\lambda\in\Q^\times$, extends.
\end{example}

\subsection{A reflection action on an odd sphere}\label{ex:reflection}
Let $G=C_2=\{1,\sigma\}$ act on $S^{2r+1}\subset\mathbb R^{2r+2}$ by reflection in one coordinate, with $r\geq1$. Choose a basepoint in the fixed sphere $S^{2r}$. The orbit category has objects $c=G/1$ and $a=G/G$, a unique arrow $c\to a$, and automorphism group $C_2$ at $c$. The fixed-point diagram is
\[
 \X(a)=S^{2r}\longrightarrow\X(c)=S^{2r+1}.
\]
Reflection has degree $-1$ on the ambient sphere. An ordinary DGCA diagram for these fixed-point data is therefore
\begin{equation}\label{eq:example-reflection-ordinary}
\begin{tikzcd}[column sep=large]
 \begin{gathered}c:\ \Sym(x_{2r+1})\\d=0\end{gathered}
 \arrow[loop left,"\sigma"]\arrow[r,"x\mapsto0"]
 &\begin{gathered}a:\ \Sym(u_{2r},z_{4r-1})\\dz=u^2\end{gathered},
\end{tikzcd}
\end{equation}
with $\sigma(x)=-x$. To realize this strictly on polynomial forms, choose an anti-invariant cocycle for the fundamental class by applying $(1-\sigma^*)/2$ to any representative. Its restriction to the fixed sphere is zero. Ordinary cocycle choices at $a$ then give a natural objectwise quasi-isomorphism from \eqref{eq:example-reflection-ordinary} to $\A_{\PL}(\X)$.

Both diagram-minimal models are given by
\begin{equation}\label{eq:example-reflection-model}
\begin{tikzcd}[row sep=large]
 \begin{gathered}
 c:\ \Sym(u_{2r},q_{2r+1},x_{2r+1},z_{4r-1},w_{4r})\\
 du=q,\quad dz=u^2-w,\quad dw=2uq
 \end{gathered}\arrow[loop left,"\sigma"]
 \arrow[d,"{q,x,w\mapsto0}"]\\
 \begin{gathered}a:\ \Sym(u_{2r},z_{4r-1})\\dz=u^2\end{gathered}.
\end{tikzcd}
\end{equation}
The restriction preserves $u,z$ and kills $q,x,w$. At $c$, the action fixes $u,q,z,w$ and sends $x$ to $-x$. Naturality is immediate. Since $q$ is odd, $d^2w=2q^2=0$, and
\[
 d^2z=d(u^2-w)=2uq-2uq=0.
\]
Thus the correction $dw=2uq$ is required by the extended-cocycle equation; it cannot be omitted.

At $c$, write $w'=w-u^2$. Then $dw'=0$ and $dz=-w'$, so
\[
 \M(c)\cong(\Sym(x_{2r+1}),0)
 \otimes(\Sym(u_{2r},q_{2r+1}),\ du=q)
 \otimes(\Sym(z_{4r-1},w'_{4r}),\ dz=-w').
\]
The last two factors are contractible. Sending the ambient generator in \eqref{eq:example-reflection-ordinary} to $x$ and taking the identity at $a$ consequently gives a natural objectwise quasi-isomorphism to \eqref{eq:example-reflection-model}.

For minimality, $u,z$ generate copies of the injective system $(\Q\xrightarrow{\id}\Q)$ with trivial $C_2$-action, while $q,w$ generate injective trivial systems supported at $c$. The generator $x$ spans the injective sign system supported at $c$. The pairs $(u,q)$ and $(z,w)$ arise from the minimal resolutions of the fixed-point homotopy systems in degrees $2r$ and $4r-1$; $x$ is adjoined as a closed generator in degree $2r+1$. When $r=1$, $x$ and $z$ are introduced together at the degree-three stage. This gives the geometric filtration. On the socle, the only potentially nonzero differential at $c$ is $dw=2uq$, which is decomposable, and the differential at $a$ is also decomposable. Hence the model is algebraically minimal. This example exhibits both the automorphism action and the additional generators needed for naturality over the orbit category.

\Needspace{8\baselineskip}
\subsection{A family with different minimal models}\label{ex:square-family}
Replacing degrees $3,4$ in \cref{subsec:square} by $n,n+1$, for $n\geq3$, gives the algebraically minimal DGCA square
\begin{equation}\label{eq:example-square-family}
\begin{tikzcd}[column sep={3.2cm,between origins},row sep=large]
 &\begin{gathered}4:\ \Sym(x_n,y_{n+1},z_{n+1})\\dx=y+z\end{gathered}
 \arrow[dl,"z\mapsto0"']\arrow[dr,"y\mapsto0"]&\\
 \begin{gathered}2:\ \Sym(x_n,y_{n+1})\\dx=y\end{gathered}
 \arrow[dr,"y\mapsto0"']&&
 \begin{gathered}3:\ \Sym(x_n,z_{n+1})\\dx=z\end{gathered}
 \arrow[dl,"z\mapsto0"]\\
 &\begin{gathered}1:\ \Sym(x_n)\\d=0\end{gathered}.&
\end{tikzcd}
\end{equation}
The generator complex consists of $\Ic{1}$ in degree $n$ mapping diagonally to $\Ic{2}\oplus\Ic{3}$ in degree $n+1$. Freeness on this complex gives cofibrancy, and its socle differential is zero. Thus the displayed model is algebraically minimal. Its only homotopy-dual systems are
\[
 H^n\Qind\M_{\mathrm{alg}}=S_1,\qquad
 H^{n+1}\Qind\M_{\mathrm{alg}}=S_4.
\]

The geometrically minimal model is
\begin{equation}\label{eq:example-square-family-geometric}
\begin{tikzcd}[column sep={3.2cm,between origins},row sep=large]
 &\begin{gathered}
 4:\ \Sym(x_n,y_{n+1},z_{n+1},v_{n+1},w_{n+2})\\
 dx=y+z,\quad dy=dv=w,\quad dz=-w
 \end{gathered}\arrow[dl,"{z,v,w\mapsto0}"']
 \arrow[dr,"{y,v,w\mapsto0}"]&\\
 \begin{gathered}2:\ \Sym(x_n,y_{n+1})\\dx=y\end{gathered}
 \arrow[dr,"y\mapsto0"']&&
 \begin{gathered}3:\ \Sym(x_n,z_{n+1})\\dx=z\end{gathered}
 \arrow[dl,"z\mapsto0"]\\
 &\begin{gathered}1:\ \Sym(x_n)\\d=0\end{gathered}.&
\end{tikzcd}
\end{equation}
Its first stage is the full minimal resolution \eqref{eq:square-resolution}, with $w$ in degree $n+2$; its next stage adjoins $v$ in degree $n+1$. Since $v$ belongs to the socle and $dv=w$, this model is not algebraically minimal. The natural substitutions $Y=y-v$, $Z=z+v$ give
\begin{equation}\label{eq:example-square-family-cancellation}
 \M_{\mathrm{geom}}\cong\M_{\mathrm{alg}}
 \otimes\Sym(v_{n+1},w_{n+2}),\qquad dv=w,
\end{equation}
where the additional generator systems are supported at $4$. This removes a contractible pair joining two Postnikov stages.

The example also makes the failure of H-diagram splitting explicit. The differentials and structure maps are linear on generators, so realization gives a simplicial rational vector-space diagram: maps to polynomial forms can be added on generators. Its objectwise homotopy types are $K(\Q,n)$ at $1$, points at $2,3$, and $K(\Q,n+1)$ at $4$. Nevertheless its diagram Postnikov invariant is the nonzero class
\[
 \Ext^2_{\Vect_{\Q}^{\C}}(S_4,S_1)\cong\Q
\]
represented by \eqref{eq:square-resolution}. By the identification \eqref{eq:ext-two-cohomology}, after dualizing the coefficient systems this is also a nonzero invariant in diagram cohomology. Thus the H-diagram does not split into the product of its two Eilenberg--MacLane diagrams. The same two-fold extension is visible algebraically as the pair cancelled in \eqref{eq:example-square-family-cancellation}.

\section*{Acknowledgements}
The author thanks Igor Baskov for discussions of tensor products of injective representations and Nicolai Reshetikhin for his interest and assistance. The author also thanks Urs Schreiber for continuing discussions.

\end{document}